\newenvironment{abstracts}{%
  \ifx\maketitle\relax
    \ClassWarning{\@classname}{Abstract should precede
      \protect\maketitle\space in AMS document classes; reported}%
  \fi
  \global\setbox\abstractbox=\vtop \bgroup
    \normalfont\Small
    \list{}{\labelwidth\z@
      \leftmargin3pc \rightmargin\leftmargin
      \listparindent\normalparindent \itemindent\z@
      \parsep\z@ \@plus\p@
      
      \itemsep\medskipamount
    }%
}{%
  \endlist\egroup
  \ifx\@setabstract\relax \@setabstracta \fi
}
\newcommand{\abstractin}[1]{%
  \otherlanguage{#1}%
  \item[\hskip\labelsep\scshape\abstractname.]%
}
\title[Fractional Sobolev isometric immersions of planar domains]{Fractional Sobolev isometric immersions\\of planar domains} %
\author{Siran Li}
\address[Siran Li]{New York University -- Shanghai, Office 1146, 1555 Century Avenue, Pudong District, Shanghai, China (200122), and  NYU-ECNU Institute of Mathematical Sciences, Room 340, Geography Building, 3663 North Zhongshan Road, Shanghai, China (200062), and School of Mathematical Sciences, Shanghai Jiao Tong University, No.~6 Science Buildings,
800 Dongchuan Road, Minhang District, Shanghai, China (200240; current address)}
\email{sl4025@nyu.edu}
\author{Mohammad Reza Pakzad}
\address[Mohammad Reza Pakzad]{IAM, University of Bonn, Endenicher allee 60, Bonn 53111, Germany, and Department of Mathematics,  University of Pittsburgh, PA, USA}
 \email{pakzad@iam.uni-bonn.de, pakzad@pitt.edu}
\author{Armin Schikorra}
\address[Armin Schikorra]{Department of Mathematics,
University of Pittsburgh,
301 Thackeray Hall,
Pittsburgh, PA 15260, USA}
\email{armin@pitt.edu}
\def\eps{\varepsilon}
\newcommand{\two}{{\RN{2}}}
\newcommand{\gl}{{\mathfrak{gl}(2)}}
\newcommand{\e}{\eps}
\newcommand{\na}{\nabla}
\newcommand{\p}{\partial}
\newcommand{\map}{\rightarrow}
\newcommand{\eucl}{{\mathfrak{e}}}
\newcommand{\g}{{\mathfrak{g}}}
\newcommand{\TO}{{\widetilde{\Omega}}}
\newcommand{\dd}{{\rm d}}
\newcommand{\G}{\Gamma}
\newcommand{\ds}{\displaystyle}
\def\leb{{\mathcal L}}
\newtheorem{theorem}{Theorem}
\newtheorem{lemma}{Lemma}
\newtheorem{corollary}[lemma]{Corollary}
\newtheorem{proposition}[lemma]{Proposition}
\newtheorem{remark}[lemma]{Remark}
\newtheorem{definition}[lemma]{Definition}
\def\diam{{\rm diam\,}}
\def\curl{{\rm curl\,}}
\def\supp{{\rm supp\,}}
\newcommand{\RN}[1]{%
  \textup{\uppercase\expandafter{\romannumeral#1}}%
}
\newcommand{\R}{\mathbb{R}}
\newcommand{\brac}[1]{\left (#1 \right )}
\newcommand{\abs}[1]{\left |#1 \right |}
 \definecolor{Green}{rgb}{0, 0.65,0}
\newcommand{\barint}{
\rule[.036in]{.12in}{.009in}\kern-.16in \displaystyle\int }
\newcommand{\barcal}{\text{$ \rule[.036in]{.11in}{.007in}\kern-.128in\int $}}
\def\mvint_#1{\mathchoice
          {\mathop{\vrule width 6pt height 3 pt depth -2.5pt
                  \kern -8pt \intop}\nolimits_{\kern -3pt #1}}%
          {\mathop{\vrule width 5pt height 3 pt depth -2.6pt
                  \kern -6pt \intop}\nolimits_{#1}}%
          {\mathop{\vrule width 5pt height 3 pt depth -2.6pt
                  \kern -6pt \intop}\nolimits_{#1}}%
          {\mathop{\vrule width 5pt height 3 pt depth -2.6pt
                  \kern -6pt \intop}\nolimits_{#1}}}
\numberwithin{lemma}{section}
 \numberwithin{equation}{section}
\newcommand{\lap}{\Delta_{\R^n}}
\newcommand{\aleq}{\precsim}
\newcommand{\aeq}{\approx}
\let\latexchi\chi
\renewcommand\chi{\@ifnextchar_\sub@chi\latexchi}
\newcommand{\sub@chi}[2]{
  \@ifnextchar^{\subsup@chi{#2}}{\latexchi^{}_{#2}}%
}
\newcommand{\subsup@chi}[3]{
  \latexchi_{#1}^{#3}%
}
\newcommand{\Jac}{\operatorname{Jac}}
\subjclass[2020]{35D30, 46F10, 53C24, 53A05}
\keywords{Fractional Sobolev spaces, rigidity of isometric immersions, differential geometry at low regularity, distributional Jacobian determinant}
\begin{document}
 
 \begin{abstracts}
\abstractin{english}
We discuss $C^1$ regularity and developability of isometric immersions of flat domains into $\R^3$ enjoying a local fractional Sobolev $W^{1+s, \frac2s}$ regularity for $2/3 \le s< 1  $, generalizing the known results on Sobolev  and H\"older regimes. Ingredients of the proof include analysis of the weak Codazzi-Mainardi equations  of  the isometric immersions and study of $W^{2,\frac2s}$  planar deformations with symmetric Jacobian derivative and vanishing distributional Jacobian determinant.  On the way, we also show that the distributional Jacobian determinant, conceived as an operator defined on the Jacobian matrix, behaves like determinant of gradient matrices under products by scalar functions. 
 
\end{abstracts}
 
     \selectlanguage{english}
\maketitle
 
\section{Introduction}
In this article  we prove the $C^1$ regularity and developability of isometric immersions of class $W^{1+s,p}$ of two dimensional domains $\Omega$ into $\R^3$ for $2/3 \le s <1$ and  $sp\ge 2$, thereby generalizing the  results of  \cite{P04} for the Sobolev regime $s=1$, $p\geq 2$  and of \cite{DLP20} for the H\"older regime $s>2/3, p=\infty$. The proofs are obtained by adapting the ideas or a few of the results  appearing in \cite{P04, LP17,  LS19, DLP20} to the fractional Sobolev case.  

\subsection{Background} There are several motivations to study isometric immersions of low regularity. A first one arises from the  the strong divergence in the respective behaviors of  $C^1$ and $C^2$ isometric immersions of two dimensional domains.  This phenomenon,  known as the flexibility and rigidity dichotomy, has other parallels, e.g.\@ for the solutions of the Euler equations in  fluid dynamics. We shall direct the readers to \cite{DLP20} and the references therein for a survey of the literature on the historic problem of developability in differential geometry, alongside its connections to the above mentioned dichotomy in nonlinear PDEs and convex integration and to a conjecture by Mischa Gromov \cite[Section 3.5.5.C, Open Problems 34-36]{Gr17}.    

The second   motivation stems from  the calculus of variations and nonlinear elastic plate theory.  Surfaces with $L^2$ integrable second fundamental form and the curvature functionals such as the Willmore energy have a long history in geometric analysis and calculus of variations. In the context of nonlinear elasticity, the Kirchhoff model stipulates that the deformation of a piece of paper under body forces or boundary conditions minimizes the Willmore functional subject to the isometric constraint.  In this context, and following the methods of  Kirchheim \cite{K01}, the $C^1$ regularity and developability of  isometric immersions with $L^2$ integrable second fundamental form were proved by the second author in \cite{P04}. This result has had many  applications in the nonlinear elastic plate theory, namely in proving  density of smooth isometries in the class of $W^{2,2}$ isometric immersions \cite{P04, Ho11}, in deriving and regularity analysis of the Euler-Lagrange equations for the Kirchhoff's models on plates \cite{Ho11-2, Ho13}, in derivation of plate and shell theories from 3d nonlinear elasticity via $\Gamma$-convergence  \cite{FJM06, HLP13}, in stability analysis for nonlinear plates \cite{LeMu09}, and finally in the confinement problem for unstretchable elastic sheets \cite{VWKG00, CM05}.

 The results of this paper give us the possibility to broaden the analysis by proposing similar models involving deformations of lower regularities, but with still some control on the curvature of the image surfaces. Indeed, as shown in  \Cref{2nd}, an isometric immersion u of regularity $W^{1+s, p}$ admits a second fundamental form $\two(u)$  of regularity 
$
W^{s-1, p}
$ if $1/2<s<1$, and $p\ge 2/s$. This way we can define a fractional Willmore-like curvature functional
\begin{equation*}\label{I2-frac}
{\mathcal I} (u) :=   \|\two (u)\|_{W^{s-1, p}}
\end{equation*}
on the class of such immersions. This variational model, which we can justifiably name {\it the fractional Kirchhoff  plate model},  is rather phenomenological; nevertheless, mathematically, many of the above mentioned problems on the standard model can be reformulated in this new context and explored.   As an example, it can be asked whether its minimizers will enjoy the same  regularity as those of the standard model established in  \cite{Ho11-2}; or will they  develop new types of singularities?   The results of the present article  concern the class of admissible deformations of this model in the regularity regime $s\ge 2/3$ and could pave the way for proving regularity of the minimizers in the footsteps of  \cite{Ho11-2}.

Finally, our last motivation for the study of weakly regular  isometric immersions is that it is connected to many interesting problems in nonlinear and geometric analysis: It has lead to the development of interesting methods in geometric measure theory 
and geometric function theory \cite{J10, JP17, LP17, LS19}, and as we shall see below,  to problems on the distributional Jacobian determinant, see also \cite{LS19, GO20} in this regard.

 \subsection{Main Results} 
 Our first result is complementary to Theorem~II in Pakzad \cite{P04}, which is the case $s=1$ of our \Cref{thm-develop}, and to the recent work for $u \in c^{1,\frac 23} (\Omega,\R^3) \supset C^{1,\frac{2}{3}+\eps}(\Omega,\R^3)$ by De Lellis--Pakzad \cite[Theorem 1]{DLP20}. Following \cite[Definition~1]{DLP20},  we say a $C^1$ mapping $u$ of a two dimensional domain $\Omega$ is developable if given any point $x\in \Omega$, $u$ is either  affine around $x$, or its Jacobian  derivative $\nabla u$ is constant along the connected component of the intersection of a  line passing through $x$ with $\Omega$.  See also \cite[Section 2]{DLP20}  for equivalent conditions.    We refer to Section~\ref{frac-not} for definitions and notations regarding fractional Sobolev spaces.    
  
\begin{theorem}\label{thm-develop}
Let  $\Omega \subset \R^2$ be an open set. Consider the class of $W_{loc}^{1+s, \frac 2s}$ isometric immersions:  
\[
 I^{1+s,\frac 2s}_{loc}(\Omega, \R^3) := \left \{ u \in W^{1+s,\frac{2}{s}}_{loc}(\Omega,\R^3): \quad (\nabla u)^T \nabla u = {\rm Id} \,\, \mbox{a.e.\@ in}\,\, \Omega \right \}
\]
Then any $u \in I^{1+s, \frac{2}{s}}_{loc}(\Omega,\R^3)$ with $\frac{2}{3}\leq s<1$ is $c^{1,\frac s2}$-regular  and   developable. 
\end{theorem}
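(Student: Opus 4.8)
The plan is to read off from $u$ a distributional second fundamental form, to establish for it weak Gauss and Codazzi--Mainardi equations, and then to reduce developability and $C^{1,s/2}$ regularity to the analysis of gradient fields with symmetric derivative and vanishing distributional Jacobian determinant at the borderline integrability $W^{\cdot,2/s}$. \textbf{Step 1 (adapted frame).} From $(\na u)^T\na u=\id$ a.e.\ one has $|\p_1u|=|\p_2u|=1$ and $\p_1u\cdot\p_2u=0$, hence $u\in W^{1,\infty}_{loc}$ and the Gauss map $N:=\p_1u\times\p_2u$ is a.e.\ a unit vector with $N\in L^\infty$; since $W^{s,2/s}\cap L^\infty$ is a Banach algebra (fractional Leibniz rule) we also get $N\in W^{s,2/s}_{loc}$, so the frame $R:=(\p_1u\,|\,\p_2u\,|\,N)$ takes values in $SO(3)$ and lies in $W^{s,2/s}_{loc}\cap L^\infty$. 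At this integrability $W^{s,2/s}\hookrightarrow\mathrm{VMO}$ (but not into $L^\infty$), which is why the intersection with $L^\infty$ matters below.

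\textbf{Step 2 (weak Gauss--Codazzi).} A priori $\na^2u\in W^{s-1,2/s}_{loc}$ is merely a distribution. Differentiating $\p_iu\cdot\p_ju=\delta_{ij}$ and $\p_iu\cdot N=0$ shows that, formally, the tangential Christoffel symbols of the Euclidean induced metric vanish, giving the structure equations $\p_i\p_ju=A_{ij}N$ and $\p_jN=-A_{jk}\p_ku$, where $A_{ij}:=-\p_iu\cdot\p_jN=\p_i\p_ju\cdot N$ is symmetric and lies in $W^{s-1,2/s}_{loc}$ (as $W^{s,2/s}\cap L^\infty$ is a multiplier on $W^{s-1,2/s}$). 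Testing the third--order consistency $\p_2\p_1\p_1u=\p_1\p_2\p_1u$ against the frame produces, from the normal component, the Codazzi system $\p_kA_{ij}=\p_jA_{ik}$ in $\mathcal D'$, and from the $\p_2u$--component the Gauss equation $\det A=0$, which in this regularity is to be understood via the distributional Jacobian determinant — so that $\det A=0$ means precisely that the distributional Jacobian of the Gauss map $N\in W^{s,2/s}_{loc}(\Om,\stwo)$ vanishes. Making this rigorous is the heart of the matter: no two negative--order distributions may be multiplied directly, so every bilinear term must be realized either as $W^{s,2/s}\cap L^\infty$ acting on $W^{s-1,2/s}$ or as an honest derivative of the $W^{s,2/s}\cap L^\infty$ frame, and the relevant integrations by parts must be justified through sharp fractional product and commutator estimates together with the property, established earlier in the paper, that the distributional Jacobian determinant behaves like an ordinary determinant under multiplication by scalar functions. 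The hypothesis $s\ge2/3$ is precisely what makes this chain of multiplications and commutations closable.

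\textbf{Step 3 (potential and developability).} Since $A$ is symmetric and obeys $\p_kA_{ij}=\p_jA_{ik}$, a Saint--Venant (Poincar\'e-type) lemma gives, on simply connected subdomains, a potential $\phi\in W^{1+s,2/s}_{loc}(\Om)$ with $A=\na^2\phi$; then $\na\phi\in W^{s,2/s}_{loc}\cap\mathrm{VMO}$ and $\det\na^2\phi=0$ in the distributional sense. This places $\na\phi$ into the class of gradient deformations with symmetric Jacobian derivative and vanishing distributional Jacobian determinant studied in the preceding sections (the case $s=1$ recovering the $W^{2,2}$ setting of \cite{P04}, the whole analysis running parallel to the H\"older treatment of \cite{DLP20}); invoking those results, $\na\phi$ is continuous and $\phi$ is developable. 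Concretely, away from the open set where $\na\phi$ is locally constant, $\na^2\phi$ has a well-defined kernel direction whose integral curves are line segments meeting $\p\Om$, along each of which $\na\phi$ is constant. Because $\p_i\p_ju=A_{ij}N$ in $\mathcal D'$, the very same segments foliate $\Om$ with $\na u$ constant along each — i.e.\ $u$ is developable.

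\textbf{Step 4 ($C^{1,s/2}$ regularity).} The borderline embedding only gives $\na u\in\mathrm{VMO}_{loc}$; the extra H\"older regularity comes from developability. Near any point at which $u$ is not affine the rulings form a Lipschitz foliation, and after a bi-Lipschitz change of variables $\na u$ becomes a function of a single transverse variable which inherits $W^{s,2/s}$ regularity in that one variable; the one--dimensional fractional Sobolev embedding $W^{s,2/s}(\R)\hookrightarrow C^{0,\,s-s/2}=C^{0,s/2}$ then yields $u\in C^{1,s/2}_{loc}$. As indicated, I expect Step~2 to be the main obstacle: in this borderline regime the second fundamental form genuinely lives in a negative--order space, so even formulating — let alone proving — the weak Gauss equation $\det A=0$ requires the full strength of the fractional multiplication theory and of the scalar--multiplication identity for the distributional Jacobian determinant; the downstream Steps 3--4 then follow the Sobolev and H\"older blueprints of \cite{P04,DLP20} with the exponent bookkeeping adjusted to the fractional scale.
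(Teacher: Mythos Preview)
Your architecture --- weak second fundamental form $A$, Codazzi so $A=\nabla f$, Gauss so $\Jac(f)=0$, then Theorem~\ref{th-main1} for $f$, transfer of developability back to $u$, and the $c^{1,s/2}$ regularity via the Lipschitz foliation and 1D Sobolev embedding --- coincides with the paper's. The gap is exactly where you locate it: your Step~2 proposes to read the Gauss equation off the tangential component of $\p_2\p_{11}u=\p_1\p_{12}u$, which formally gives $A_{11}A_{22}-A_{12}^2=0$, a product of two $W^{s-1,2/s}$ objects that you correctly flag as undefined. You then invoke Theorem~\ref{th:lambdajac} as the cure without saying how it applies, and your interpretation of $\det A$ as the distributional Jacobian of the Gauss map $N$ does not by itself yield a proof that this quantity vanishes.

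The paper does \emph{not} attempt to derive the Gauss equation directly on the limit object. Instead it detours through the components: it first shows $\Jac(\nabla u^m)=0$ for each $m\in\{1,2,3\}$ (Theorem~\ref{compnt-develop}), which is well-posed because $\nabla u^m\in W^{s,2/s}$. This is done by mollifying $u$ to a smooth immersion $u_\e$ with metric $\g^\e$; the key is that $\g^\e-\eucl=(\nabla u_\e)^T\nabla u_\e-((\nabla u)^T\nabla u)_\e$ has commutator structure (Lemma~\ref{moli-est}(iii)), forcing the Riemann curvature $R_{1212}(\g^\e)=\det\two^\e$ and the Christoffel symbols $\Gamma^\e$ to vanish at the required rates (Proposition~\ref{det2nd-e}, Corollary~\ref{Christoffel}). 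Expanding $\det\nabla^2 u_\e^m$ via $\p_{ij}u_\e^m=\Gamma^{k,\e}_{ij}\p_ku_\e^m+\two^\e_{ij}\vec n^{\e,m}$ then shows each piece tends to zero. Only \emph{afterwards} does Theorem~\ref{th:lambdajac} enter: the structure equation $\nabla^2u^m=\vec n^m\,\nabla f$ (itself obtained by passing to the limit in the mollified Gauss formula) gives $\Jac(\nabla u^m)[\phi]=\Jac(f)[(\vec n^m)^2\phi]$, and summing over $m$ with $|\vec n|^2=1$ yields $\Jac(f)=0$. So Theorem~\ref{th:lambdajac} does not let you \emph{define} the tangential product in Step~2; its role is to transfer the componentwise vanishing (where the distributional Jacobian is unambiguously defined) to $f$. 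Your outline is missing this componentwise step, and without it there is no evident way to close the argument at the critical threshold $s=2/3$.
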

\begin{remark}
Here $c^{1,\alpha}$ denotes all mappings whose derivatives of components lie locally in the little H\"older space $c^{0,\alpha}$, which is 
the closure of smooth functions in the $C^{0,\alpha}$ norm.
\end{remark}

 As a consequence we also obtain the extension of \cite[Corollary 1.1]{P04} to fractional Sobolev spaces, cf.\@ \cite{VWKG00, CM05}.

\begin{corollary}
There exists $\rho_0>0$ such that whenever $s \geq \frac{2}{3}$ there is no $W_{loc}^{1+s,\frac{2}{s}}$ isometric immersion of the $2$-dimensional disk into a three-dimensional Euclidean ball of radius $r <\rho_0$.  
\end{corollary}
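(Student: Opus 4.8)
The plan is to read the corollary off from the developability conclusion of \Cref{thm-develop} by an elementary geometric estimate: a $C^1$ developable isometric immersion of a $2$-disk has image of diameter bounded below by an absolute constant, which forces any ball containing the image to have radius at least half of it. Normalize the disk to $D = B_1(0)\subset\R^2$ (the general case follows by scaling, with $\rho_0$ scaling linearly in the radius), and suppose $u$ is a $W^{1+s,\frac2s}_{loc}$ isometric immersion with $u(D)\subset B_r(x_0)$. By \Cref{thm-develop} when $\frac23\le s<1$ --- and, for $s\ge 1$, by the Sobolev embedding $W^{1+s,\frac2s}_{loc}\hookrightarrow W^{2,2}_{loc}$ together with \cite{P04} (the case $s=1$ being \cite{P04} itself) --- the map $u$ is $C^1$ and developable; being $C^1$, the constraint $(\nabla u)^T\nabla u=\mathrm{Id}$ holds at every point, so $\nabla u(x)$ is a linear isometry $\R^2\hookrightarrow\R^3$ for all $x\in D$.

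Next I would run a dichotomy on the open set $\Sigma\subset D$ of points near which $u$ is affine, tested against the concentric sub-disk $B_{1/2}(0)$. If $B_{1/2}(0)\subset\Sigma$, then $\nabla u$ is locally constant on the connected set $B_{1/2}(0)$, hence constant there, so $u$ maps $B_{1/2}(0)$ isometrically onto a flat round disk of radius $\tfrac12$ and $\mathrm{diam}\,u(D)\ge 1$. Otherwise choose $q\in B_{1/2}(0)\setminus\Sigma$; since $u$ is not affine near $q$, developability yields a line $\ell\ni q$ along which $\nabla u$ is constant on the connected component of $\ell\cap D$ containing $q$, and by convexity of $D$ this component is the full chord $C=\ell\cap D$. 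Writing $\nabla u\equiv A$ on $C$ (so $A^TA=\mathrm{Id}$), we get $u(y)=u(q)+A(y-q)$ for $y\in C$, i.e. $u$ maps $C$ isometrically onto a straight line segment of length $|C|$; since $|q|<\tfrac12$, plane geometry gives $|C|\ge 2\sqrt{1-|q|^2}\ge\sqrt3$, so $\mathrm{diam}\,u(D)\ge\sqrt3$. In either case $\mathrm{diam}\,u(D)\ge 1$, whence $2r=\mathrm{diam}\,B_r(x_0)\ge\mathrm{diam}\,u(D)\ge 1$, i.e. $r\ge\rho_0:=\tfrac12$, and no such immersion exists for $r<\rho_0$. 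Optimizing the same argument over all radii $\rho\in(0,1)$ of the test sub-disk (comparing the flat disk of radius $\rho$ against the chord bound $2\sqrt{1-\rho^2}$) improves the constant to $\rho_0=\tfrac1{\sqrt2}$.

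There is no genuine obstacle here beyond \Cref{thm-develop}, which carries all the analytic content; the corollary is a short geometric consequence. The one point needing a little care is that the locally-affine part $\Sigma$ of a developable map need not be empty, so one cannot simply take $q$ to be the centre of $D$ and instead must run the dichotomy on a fixed sub-disk. Conceptually, what the corollary records is that $C^1$-developability destroys precisely the Nash--Kuiper-type flexibility that would otherwise allow an isometric immersion of a disk to be squeezed into an arbitrarily small ball.
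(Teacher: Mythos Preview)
Your argument is correct and is essentially the approach the paper has in mind: the paper does not spell out a proof of this corollary at all, giving only the one-line remark that ``the images of such immersions will always contain segments larger than the unit segment,'' which is precisely what your dichotomy establishes (in the affine case via the diameter of the flat sub-disk, in the non-affine case via the constancy chord through $q$). Your write-up is more careful than the paper's hint in that you handle the possibility that the center lies in the locally-affine region $\Sigma$, and your optimization over the test radius $\rho$ to reach $\rho_0=1/\sqrt{2}$ is a nice sharpening. Note, incidentally, that the paper's parenthetical ``$\rho_0<\tfrac12$'' appears to be a typo: the reasoning offered there, and your argument, both give $\rho_0\ge \tfrac12$.
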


Note that $\rho_0 < \frac 12$,  as the images of such immersions will always contain  segments larger than the unit segment.

\begin{remark}
The same statements hold true for isometric immersions of $W^{1+s, p}$-regularity with $s\ge 2/3$, $sp> 2$.  If $s>2/3$, $p>3$, this fact follows from \Cref{thm-develop} by the embedding of $W_{loc}^{s,p}$ into $W_{loc}^{2/3, 3}$. In the case $s= 2/3$, $p>3$ this embedding fails, but following the footsteps of \cite{DLP20},  a proof for the developability statement can be achieved, which we leave to the reader. We have concentrated on the more challenging borderline case $sp=2$.
\end{remark}
 
\begin{remark}\label{rem:sharp1}
\Cref{thm-develop} may fail for isometric immersions of  $W^{1+s, p}$-regularity if $sp < 2$. Indeed, for any $0<s\le 1$ and $p<2/s$, the $1$-homogeneous map $u: B^1\to \R^3$ expressed in the polar coordinates as  
$$u(r, \theta) := (\frac 12 r\cos (2\theta), \frac 12 r\sin (2\theta), \frac 12 \sqrt{3}r) $$ 
is a $W^{1+s, p}$ isometric immersion of the $2$-dimensional disk into $\R^3$ but has a conical singularity at the origin. It clearly does not belong to $C^1$ and fails to be developable.
\end{remark}
\begin{remark}
Following \cite{MuPa05} for $s=1, p=2$, we expect that the  isometric immersion can be shown to be $C^1$ up to the boundary if its $W^{1+s,\frac 2s}(\Omega)$ norm is finite and $\partial \Omega$ is of class $C^{1,\alpha}$ for some $\alpha>0$. This boundary regularity fails if $\p \Omega$ is merely of class $C^1$ \cite[Remark 7]{MuPa05}.    
\end{remark}

\begin{remark} 
To establish the result, directly following the arguments of \cite{DLP20} is not enough.  Indeed, observe that $u$ is a priori not even assumed to be in $C^1$. But this is not the only difficulty,  as we will explain in \Cref{sec-mol}  and \Cref{weaker-proof}.  We will hence adapt  a new approach.
 In particular,  \Cref{th:lambdajac} below is a new contribution devised to bypass the new obstacles for the case $s=2/3$.   
\end{remark}

To set up our second and third results,   we first remind  following Brezis--Nguyen \cite{BN11} that for any  domain $\Omega \subset \R^n$, and $f$ belonging to the optimal space $W_{loc}^{\frac{n-1}{n}, n}(\Omega, \R^n)$,  the  Jacobian determinant  ${\rm Jac} (f) := Det (\nabla f)$ is well-defined as a distribution in $\mathcal D'(\Omega)$,  see also Sickel--Youssfi \cite{WY99a}.  We also refer to the fundamental works on the distributional Jacobian developed by Reshetnyak \cite{R68},  Wente \cite{W69}, Ball \cite{B76}, Tartar \cite{T79}, M\"uller \cite{M90}, Coifman--Lions--Meyer--Semmes \cite{CLMS}, and Brezis--Nirenberg \cite{BNi95}. In view of  the embedding theorems for the fractional Sobolev spaces,  ${\rm Jac} (f)$ is well-defined for $f \in W_{loc}^{s, p} (\Omega, \R^n)$ if $\frac{n-1}{n} < s\le  1$ and $p\ge \frac {n^2}{ns+1}$.  In particular, it can be established  by the methods of \cite{BN11} that if $p=n/s$, $ {\rm Jac} (f) \in W_{{loc}}^{n(s-1), 1/s}{(\Omega)}$, -- the proof is explained in \cite[Lemma 1.3]{LS19}, cf.\@ \Cref{la:distjac}. 

  Our analysis establishes a connection between isometric immersions of fractional Sobolev  regularity  and deformations of plane domains  $f$ with symmetric Jacobians $\nabla f$ and vanishing distributional Jacobian determinants  ${\rm Jac} (f)$.  In particular, the developability of isometric immersions is proved using the following similar statement for these deformations.  
\begin{theorem}\label{th-main1}
Let $\Omega \subset \R^2$ be an open set. Assume that  $s \geq 2/3$  and $f \in W^{s,\frac{2}{s}}_{loc}(\Omega,\R^2)$  with  its distributional Jacobian satisfying 
\[
{\rm curl}\, f=0\quad \mbox{and} \quad  {\rm Jac} (f)  = 0 \quad \mbox{in}\,\, \mathcal D'(\Omega).
\]
Then, $f \ds \in c^{0,\frac s2}(\Omega)$ and  for any point $x\in \Omega$, $f$ is either constant around $x$, or  it  is constant along the connected component of the intersection of a  line passing through $x$ with $\Omega$. 
\end{theorem}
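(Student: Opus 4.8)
The plan is to reduce the statement to a regularity result and a ruled-structure result for a degenerate Monge--Amp\`ere equation. Since $\operatorname{curl} f=0$ in $\mathcal D'(\Omega)$, on every simply connected subdomain we may write $f=\nabla v$ for a scalar potential $v\in W^{1+s,\frac2s}_{loc}$, and the hypothesis $\operatorname{Jac}(f)=\operatorname{Det}\nabla f=0$ becomes the very weak equation $\operatorname{Det}\nabla(\nabla v)=0$ in $\mathcal D'(\Omega)$. With this substitution, $f\in c^{0,\frac s2}_{loc}$ is the same as $v\in c^{1,\frac s2}_{loc}$, and the asserted developability of $f$ is the statement that $\nabla v$ is constant along the connected component through $x$ of each line meeting $\Omega$, unless $\nabla v$ is locally constant. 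So it suffices to prove these two facts for such $v$.

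For the regularity, I would mollify, $v_\eps=v\star\rho_\eps$, and control the defect $\det D^2 v_\eps$, which should converge to $\operatorname{Det}\nabla(\nabla v)=0$. The borderline exponent $sp=2$ is precisely the range in which the distributional Jacobian of a $W^{s,\frac2s}$ map is well defined (cf. \Cref{la:distjac}) and sequentially continuous under mollification, so $\det D^2 v_\eps\to 0$; moreover, because $\nabla v_\eps$ is curl-free, the second-order cancellations in the quadratic form $\nabla v_\eps\otimes\nabla v_\eps$ — the div--curl/null structure underlying $\operatorname{Det}$ — should place $\det D^2 v_\eps$ in the local Hardy space, with norm controlled locally by $\|f\|_{W^{s,\frac2s}}$. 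Feeding this into a Wente-type compensated-compactness estimate, the fractional second-order analogue of the div--curl lemma, upgrades $f$ from the borderline class $W^{s,\frac2s}\hookrightarrow\mathrm{BMO}$ to $c^{0,\frac s2}_{loc}$. The cutting-off and localization needed to run this on a general open $\Omega$ is exactly where \Cref{th:lambdajac} enters: multiplying $f$ by a scalar cutoff must not create spurious Jacobian terms, and the behaviour of $\operatorname{Det}$ under products by scalar functions is precisely what \Cref{th:lambdajac} records.

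For the ruled structure, once $f$ is continuous and $\operatorname{Det}\nabla f=0$ holds in a strong enough, no-concentration sense, I would follow Hartman--Nirenberg and Pakzad \cite{P04}, in the quantitative form of De Lellis--Pakzad \cite{DLP20}. The flatness $\operatorname{Det}\nabla f=0$ lets one associate to almost every point a well-defined degenerate direction along which $f$ does not vary to first order; the core of the argument is to promote this to the statement that the open set $\Omega_0$ on which $f$ is not locally constant is foliated by maximal segments along which $f$ is genuinely constant. The two delicate sub-claims are that two such segments cannot meet inside $\Omega$ — otherwise $f$ would be forced to have nonzero Jacobian on a set of positive measure, contradicting $\operatorname{Det}\nabla f=0$ — and that each maximal segment is relatively closed in $\Omega$, hence connects boundary to boundary, which is the conclusion. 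The $c^{0,\frac s2}$ modulus of $f$ is used to pass to the limit along sequences of segments and to exclude the conical degeneration of \Cref{rem:sharp1}; and it is the full $W^{s,\frac2s}$ bound, not the H\"older bound alone — which at exponent $\tfrac s2<\tfrac12$ is insufficient and, as \Cref{rem:sharp1} shows, compatible with flexible behaviour — that makes the no-crossing step go through.

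The main obstacle is the regularity bootstrap at the borderline $sp=2$, and especially at $s=\tfrac23$: there the distributional Jacobian is only just defined, the mollification estimates are not uniform in the naive manner, and the compensated-compactness gain of $\tfrac s2$ derivatives must be extracted exactly at the threshold. This is why a direct imitation of the $C^1$ arguments of \cite{DLP20} does not suffice — $f$ is not even assumed continuous — and why the scalar-multiplication behaviour of $\operatorname{Det}$ in \Cref{th:lambdajac}, together with the analysis of $W^{2,\frac2s}$ gradient deformations, is introduced in its place.
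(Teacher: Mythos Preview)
Your outline diverges substantially from the paper's proof, and in two places it contains genuine gaps rather than merely different choices.

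\textbf{How the paper actually argues.} Continuity of $f$ is taken as known from \cite[Theorem~1.6]{LS19}. The core new step is \Cref{prop: thin}: one forms Kirchheim's perturbation $f^{(\delta)}(x,y)=f(x,y)+\delta(-y,x)^\top$, uses $\operatorname{curl} f=0$ and $\Jac(f)=0$ to compute $\Jac(f^{(\delta)})\equiv\delta^2>0$, and then applies the degree formula (\Cref{lem: degree}) together with the fact that $W^{s,2/s}$ images of circles have zero area (Appendix~\ref{s-ac}) to conclude $\leb^2(f^{(\delta)}(B_x))\le\delta^2\leb^2(B_x)$, hence $\leb^2(f(\Omega))=0$. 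Developability then follows from the known implication ``image has empty interior $\Rightarrow$ developable'' in \cite[Corollary~1.1.2]{kor2} and \cite[Proposition~2.1]{DLP20}. Only \emph{after} developability is established does one obtain $c^{0,s/2}$: the bilipschitz straightening of \cite[Lemma~2.11]{DLP20} turns the rulings into coordinate lines, and Fubini for $W^{s,2/s}$ plus the one-dimensional embedding $W^{s,2/s}(\R)\hookrightarrow C^{0,\,s-s/2}=C^{0,s/2}$ gives the H\"older modulus along the transverse direction.

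\textbf{The regularity gap in your proposal.} You propose to obtain $c^{0,s/2}$ directly from a Wente/div--curl argument on $\det D^2 v_\eps$. The known compensated-compactness result in this setting is precisely \cite{LS19}, and it yields continuity, not a H\"older exponent; there is no established mechanism by which the null structure of $\operatorname{Det}$ alone produces a gain of $s/2$ derivatives at the borderline $sp=2$. In the paper the $c^{0,s/2}$ regularity is a \emph{consequence} of the ruled structure, not an input to it, and the exponent $s/2$ comes from the one-dimensional embedding, not from compensation.

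\textbf{Misplacement of \Cref{th:lambdajac}.} You invoke \Cref{th:lambdajac} as a cutoff/localization device in the proof of \Cref{th-main1}. It is not used there at all. Its role in the paper is in \Cref{sec-dev} (specifically \Cref{Jac(f)=0}), where the identity $\nabla g_m=\vec n^{\,m}\nabla f$ links the Jacobian of each component of the isometric immersion to $\Jac(f)$; this is a statement about the second fundamental form, unrelated to scalar cutoffs and unrelated to the proof of \Cref{th-main1}.

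\textbf{The developability step.} Your sketch (degenerate directions, no-crossing of segments) is the classical Hartman--Nirenberg picture, but at this regularity the way the paper makes it rigorous is the measure-zero-image reduction above. You omit that reduction and instead lean on the $c^{0,s/2}$ regularity you have not yet established; as you yourself note, the H\"older exponent $s/2<1/2$ is too weak on its own, so without either the image-measure argument or an independent proof of $c^{0,s/2}$, the segment argument does not close.
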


See similar statements to \Cref{th-main1} in \cite[Proposition 2.29]{K01} (for Lipschitz maps),  \cite[Proposition 1.1]{P04} (for $W^{1,2}$-maps), and \cite[Theorem 1.3]{LP17} for H\"older continuous maps. The continuity of any $f$ as in \Cref{th-main1} was already shown in \cite[Theorem 1.6]{LS19}.

\begin{remark}
As in \Cref{rem:sharp1}, \Cref{th-main1} fails for $f\in W^{s,p}$ with $\frac {4}{2s+1} \le p < \frac{2}{s}$, even for $s=1$. We refer to the so-called  \lq \lq fish-like example" discussed in  \cite{FM05}: Letting $c=0$, $f = \nabla u$ satisfies $\curl (f) = 0$ and $\Jac(f) = {\mathcal H}u= 0$, however $f$ is not even continuous.
\end{remark}

Another new contribution of this article, which will turn out to be crucial in  proving \Cref{thm-develop} in the critical threshold $s= \frac 23$,  directly regards the properties of distributional Jacobian determinants.  As we shall see in \Cref{weak-product} the distributional product $\lambda \nabla g$ is well-defined provided $\lambda \in L^\infty \cap W^{s,\frac{n}{s}}(\Omega)$ and $g \in W^{s,\frac{n}{s}}(\Omega;\R^n)$  if $s> 1/2$. In view of this fact, the following seemingly natural behavior of the distributional Jacobian  determinant  can be proven:

\medskip
\begin{theorem}\label{th:lambdajac}
Let $n \geq 2$, and $\Omega \subset \R^n$  be a bounded smooth domain, or $\Omega= \R^n$.  Assume that  $s \in [\frac{n}{n+1},1)$, $\lambda \in L^\infty \cap W^{s,\frac{n}{s}}(\Omega)$, $f,g \in W^{s,\frac{n}{s}}(\Omega;\R^n)$, and that    
\begin{equation}\label{eq:distnablaflnablag}
 \nabla f = \lambda \nabla g.
\end{equation}
Then for any $\phi \in C_c^\infty(\Omega)$,
\[
 \Jac(f)[\phi]=\Jac(g)[\lambda^n \phi].
\]
\end{theorem}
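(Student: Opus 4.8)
The plan is to prove the identity first for smooth data and then pass to the limit via the mollification and continuity estimates that underlie the definition of $\Jac$ on $W^{s,n/s}$. Concretely, if $f,g$ were smooth with $\nabla f=\lambda\nabla g$ and $\lambda$ smooth, then pointwise $\det\nabla f=\lambda^n\det\nabla g$, and since $\Jac(f)=\det\nabla f$ as a function in that case, one has $\int_\Omega \phi\,\det\nabla f = \int_\Omega \lambda^n\phi\,\det\nabla g$, which is exactly the claimed formula. So the content is entirely in the approximation argument: I would mollify, writing $f_\eps=f*\rho_\eps$, $g_\eps=g*\rho_\eps$, $\lambda_\eps=\lambda*\rho_\eps$, and control the error caused by the fact that $\nabla f_\eps \neq \lambda_\eps\nabla g_\eps$ in general.

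First I would record the well-definedness inputs from earlier in the paper: $\Jac$ is a bounded operator from $W^{s,n/s}$ into the negative space $W^{n(s-1),1/s}$ with the estimate continuous under strong convergence in $W^{s,n/s}$, and, crucially, by the "weak product" discussion (\Cref{weak-product}) the product $\lambda\nabla g$ makes sense distributionally and $\lambda_\eps\nabla g_\eps \to \lambda\nabla g$ in the appropriate sense when $\lambda\in L^\infty\cap W^{s,n/s}$ and $g\in W^{s,n/s}$. Combined with $\nabla f=\lambda\nabla g$, this gives that $\nabla f_\eps - \lambda_\eps\nabla g_\eps \to 0$ in a space dual-paired against the multilinear expansion of the determinant. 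Then I would expand $\det\nabla f_\eps$ using multilinearity of the determinant along the columns, replacing the columns of $\nabla f_\eps$ one at a time by the corresponding columns of $\lambda_\eps\nabla g_\eps$; each replacement produces a term containing the factor $\nabla f_\eps - \lambda_\eps\nabla g_\eps$ paired with factors that are bounded in the complementary norms (using $\lambda_\eps$ bounded in $L^\infty$ uniformly, and $\nabla f_\eps,\nabla g_\eps$ bounded in $W^{s-1,n/s}$), so all these error terms vanish in the limit when tested against $\phi\in C_c^\infty$. This reduces $\Jac(f)[\phi]=\lim \int \phi\det\nabla f_\eps$ to $\lim\int \phi\,\lambda_\eps^n\det\nabla g_\eps$.

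It then remains to identify $\lim_\eps \int_\Omega \lambda_\eps^n\phi\,\det\nabla g_\eps$ with $\Jac(g)[\lambda^n\phi]$. Here $\lambda^n\phi$ is not a legitimate test function — it is only $L^\infty\cap W^{s,n/s}$, not $C_c^\infty$ — so one must make sense of the pairing $\Jac(g)[\lambda^n\phi]$, and this is the point that needs the hypothesis $s\in[\tfrac{n}{n+1},1)$: in that range $n(s-1)\ge -\tfrac{1}{s+\ldots}$, more precisely the negative smoothness of $\Jac(g)$ is mild enough that $\Jac(g)$ extends to a continuous functional on $W^{s_0,q_0}$ for a suitable $(s_0,q_0)$ containing $\lambda^n\phi$, and $\lambda_\eps^n\phi\to\lambda^n\phi$ there. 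This uses that $\lambda\mapsto\lambda^n$ maps $L^\infty\cap W^{s,n/s}$ to itself continuously (bounded functions, product/chain estimates in fractional spaces), and that multiplication by the fixed cutoff $\phi$ preserves the relevant space. So the final step is: $\int \lambda_\eps^n\phi\,\det\nabla g_\eps = \Jac(g_\eps)[\lambda_\eps^n\phi]\to \Jac(g)[\lambda^n\phi]$ by joint continuity of the duality pairing under $\Jac(g_\eps)\to\Jac(g)$ in the negative space and $\lambda_\eps^n\phi\to\lambda^n\phi$ in its predual.

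I expect the main obstacle to be precisely the bookkeeping of function spaces in this last step: one must pin down the exact pair $(s_0,q_0)$ (depending on $n$ and $s$) for which $\Jac(g)\in (W^{s_0,q_0})^*$, verify that $\lambda^n\phi\in W^{s_0,q_0}$ with convergence of the mollifications there, and check that the error terms from the determinant expansion really are controlled by the duality between $W^{s-1,n/s}$-type spaces and their complements — the borderline case $s=\tfrac{n}{n+1}$ (equivalently $sp=n$ with $p=n/s$) being the tight one, exactly as flagged in the introduction. The smooth-case identity and the multilinear expansion are routine; the fractional-space continuity of $\lambda\mapsto\lambda^n$ and of the products, together with the mapping properties of $\Jac$ between the right negative Sobolev spaces, are where the real care is needed.
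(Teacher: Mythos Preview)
Your overall strategy---mollify, expand the determinant multilinearly, and pass to the limit---is exactly the paper's, and your treatment of the second step (identifying $\lim_\eps\int\lambda_\eps^n\phi\,\det\nabla g_\eps$ with $\Jac(g)[\lambda^n\phi]$ via $\lambda^n\phi\in W^{s,n/s}_0\hookrightarrow W^{(1-s)n,1/(1-s)}_{00}$) is correct in outline.

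The gap is in the first step. You claim the error terms in the telescoping expansion vanish because the difference column $\nabla f_\eps-\lambda_\eps\nabla g_\eps$ tends to zero in $W^{s-1,n/s}$ while the remaining columns are ``bounded in the complementary norms.'' But there is no such complementary estimate: once a column is replaced by $\lambda_\eps\nabla g^k_\eps$ (or by $(\lambda\nabla g^k)_\eps$), it is no longer a gradient, and the determinant loses its null-form structure. An integral of the form $\int_{\R^n}\phi\,v_1\wedge\cdots\wedge v_n$ with general $v_j$ merely bounded in $W^{s-1,n/s}$ is \emph{not} controlled by $\prod_j[v_j]_{W^{s-1,n/s}}$; the Jacobian estimate of \Cref{la:distjac} depends essentially on each column being exact. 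At the mollified level this shows up as the individual $L^{n/s}$ norms of the columns blowing up like $\eps^{s-1}$, so a naive H\"older bound diverges.

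The paper repairs this with two additional ingredients you are missing. First, a Hodge decomposition (\Cref{pr:hodge}, \Cref{pr:hodgediff}): each non-gradient column $\lambda_\eps\,dg^k_\eps$ or $(\lambda\,dg^k)_\eps$ is written as $da+\beta$, where $a$ is bounded in $W^{n/(n+1),n+1}$ and $\beta$ lies in the better space $W^{(n-1)/(n+1),(n+1)/2}$---and for the difference column, both pieces tend to zero. Second, a mixed determinant estimate (\Cref{pr:detest}) controlling $\int\phi\,da_1\wedge\cdots\wedge da_k\wedge\beta_{k+1}\wedge\cdots\wedge\beta_n$ by the product of the corresponding seminorms. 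Expanding every column this way and applying \Cref{pr:detest} to each of the $2^n$ resulting terms is what actually closes the argument; without it the multilinear error terms are not under control, particularly at the threshold $s=\tfrac{n}{n+1}$.
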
 Note that   since $s\ge \frac{n}{n+1}$,
$$
\lambda^n \phi \in W_{0}^{s, \frac ns}{(\Omega)} \hookrightarrow W_{0}^{(1-s)n, \frac {1}{1-s}}{(\Omega)} = W_{{0}0}^{(1-s)n, \frac {1}{1-s}}{(\Omega)},
$$ and so the right hand side in the above Jacobian determinant identity is well-defined. 

\medskip
 
The outline of this paper is as follows. In \Cref{frac} we begin with some preliminaries on fractional Sobolev spaces and gather some important statements to be used later in the article. In \Cref{f-develop} we discuss developability of  fractional Sobolev 2d deformations with  symmetric Jacobian derivative and vanishing distributional  Jacobian determinant. In the subsequent Sections \ref{sec-mol} and \ref{2nd}, we will set out to define a notion of 2nd fundamental form for fractional Sobolev isometries and to derive a weak version of  Codazzi-Mainardi system of equations for it.  In \Cref{sec-compnt}   the developabilty and regularity of each component are shown.  We will complete the proof of  \Cref{thm-develop} in  \Cref{sec-dev} and  present a proof of \Cref {th:lambdajac} in \Cref{natural}. In \Cref{weaker-proof},  it is briefly shown, as a side-note, how  \Cref {th:lambdajac} can be bypassed  in case $s>2/3$. In \Cref{s-ac} we introduce a notion of fractional absolute continuity in order to give a simple proof of the known fact from \cite{HH15} that the image of a $W^{s,p}(\R, \R^2)$ deformation is of Lebesgue measure zero provided  $s>1/2$ and $sp>1$.

\subsection*{Acknowledgments.}
The authors  would like to thank Ha\"im Brezis for his helpful comments on the early draft of this manuscript. This project was based upon work supported by the National Science Foundation and was partially funded by the Deutsche Forschungsgemeinschaft (DFG, German Research Foundation).  M.R.P. was supported by the NSF award DMS-1813738 and by the DFG  via SFB 1060 - Project Id  211504053. A.S. is funded by the NSF Career award DMS-2044898 and Simons foundation grant no 579261.

\section{Fractional Sobolev spaces, an overview and some facts}\label{frac}
\subsection{Notations}\label{frac-not}
 
We will work with the Slobodeckij or Gagliardo fractional Sobolev space, also sometimes referred to as the Besov space.
Namely for any open set $\Omega \subset \R^n$, nonnegative integer $k$,  $0<s<1$ and $1 \le  p<\infty$, we define the fractional $W^{s,p}$-seminorm of a  mapping $f\in L^1_{loc}(\Omega, \R^N)$ by  
$$
\displaystyle [f]_{W^{s,p}(\Omega)}:= \Big (\int_\Omega  \int_\Omega \frac{|f(x) - f(y)|^p}{|x-y|^{n+sp}} \,\, dx\, dy \Big ) ^{1/p}, 
$$  
and we set  for any integer $k\ge 0$  (identifying $W^{0,p}$ with $L^p$ when $k=0$), 
$$
W^{k+s,p} (\Omega):=\{ f\in W^{k,p}(\Omega): \quad [D^k f]_{W^{s,p}(\Omega)} < \infty\}, 
$$ which is a Banach space with the norm  
$$
\|f\|_{W^{k+s,p}(\Omega)}:=    \|f\|_{W^{k,p}(\Omega)}   +   [D^k f]_{W^{s,p}(\Omega)}.
 $$   $W^{k+s,p}_0 (\Omega)$ is defined to be the closure of $C^\infty_c(\Omega)$ in this space.  Note that $C^\infty_c(\R^n)$ is dense  in $W^{k+s,p}(\R^n)$ \cite[Theorem 2.4]{DNPV12}.  If $\Omega$ is a bounded smooth domain, there is a bounded  linear extension operator mapping  $W^{s,p} (\Omega)$  to $W^{s,p} (\R^n)$ \cite{DNPV12,Z15}.  For any such $\Omega$, or for $\Omega=\R^n$, and $1 \le p<\infty$, these spaces coincide with the Besov-Triebel-Lizorkin type spaces  $B^{s}_{p,p} (\Omega)= F^s_{p,p}(\Omega)$ according to \cite[Proposition 2.1.2 and Section 2.4]{RuSi}. Indeed the identification can be established as these spaces are the real $(s, p)$-interpolation between $L^p$ and $W^{1,p}$ spaces, see  \cite[Example 1.8]{Lun} and \cite[Theorem 6.2.4]{BeLo}.
 
When $1<p<\infty$, the Lions-Magenes Sobolev space $W^{k+s,p}_{00}(\Omega)$  introduced in \cite{LM61-62} is the  closed subspace of  $W^{k+s,p}(\R^n)$  defined by
$$
W^{k+s, p}_{00}(\Omega):= \{f\in W^{k+s,p}(\R^n):\,\, \supp f \subset \overline \Omega\},
$$ equipped with the induced semi-norm $[f]_{W_{00}^{k+s,p}(\Omega)}$ and norm  $\|f\|_{W_{00}^{k+s,p}(\Omega)}$. We refer to \cite[Section 4.3.2]{Tri} for more references and for the following properties:  $W^{k+s, p}_{00}(\Omega)$ can also be identified as the set of those elements of $W_0^{k+s ,p}(\Omega)$ whose extensions by $0$ outside of $\Omega$ belong to $W^{k+s,p}(\R^n)$.  $C^\infty_c(\Omega)$ is dense in $W_{00}^{k+s,p}(\Omega)$ and we have
$$
\|f\|_{W_0^{k+s ,p}(\Omega)} \aleq \|f\|_{W_{00}^{k+s,p}(\Omega)}.
$$  
If $sp \neq 1$ and $\partial \Omega$ is sufficiently regular the linear operator extending $f \in  C^\infty_c (\Omega)$ by $0$ outside of $\Omega$  to $f_0 \in W^{k+s,p}(\R^n)$  satisfies 
$$
\|f_0\|_{W^{k+s ,p}(\R^n)} \aleq \|f\|_{W^{k+s,p}(\Omega)},
$$
which implies $W^{k+s,p}_{00}(\Omega) = W^{k+s,p}_{0}(\Omega)$. If $sp=1$  this is not the case and $W^{k+s,\frac 1s}_{00}(\Omega)$ is a proper dense subspace of  $W^{k+s,\frac 1s}_{0}(\Omega)$ when $\Omega \neq \R^n$.
 
 If $\Omega$ is  a bounded smooth domain or if  $\Omega= \R^n$, we  set for $0<s<1$, $1<p<\infty$: 
 $$
W^{-s,p'}(\Omega):=  (W_{00}^{s,p}(\Omega))', 
$$ with $1/p + 1/{p'} =1$, as a subset of distributions in $\mathcal D'(\Omega)$. Our definition departs from \cite[Section 2.1.1 and Section 2.4.1]{RuSi} but by \cite[Theorem 4.8.1]{Tri}, these two definitions coincide.  Therefore the extension property is still valid for negative differentiability exponent: For a bounded smooth domain $\Omega$, and $0<s<1$, any element of $W^{-s,p}$ can be extended by a bounded linear operator to an element of $W^{-s,p}(\R^n)$ \cite[Theorem 2.4.2/2]{RuSi}. Moreover by \cite[Propostion 2.1.4/2]{RuSi}
$$
W^{s, p}(\Omega) = \{ f\in \mathcal D'(\Omega): \,\,\, f \in W^{s-1, p}(\Omega) \,\,\, \mbox{and} \,\,\, Df \in W^{s-1, p}(\Omega, \R^n)\},  
$$  with equivalence of norms
\begin{equation}\label{I-1-norm}
\|f\|_{W^{s, p}(\Omega)} \aeq  \|f\|_{W^{s-1, p}(\Omega)} + \|Df\|_{W^{s-1, p}(\Omega)}.
\end{equation} For $t>-1$, the vector valued spaces $W^{t, p}(\Omega, \R^N)$  are defined to be all $\R^N$-valued  mappings whose components lie in $W^{t, p}(\Omega)$.    We will omit the target $\R^N$ when there is no ambiguity.  

It is also useful to also define for  $0<s<1$  and $1<p<\infty$ the homogenous  norm 
\begin{equation}\label{dual}
\|f\|_{\dot{W}^{-s,p'}(\Omega)}:= \sup \left\{ f [\phi]  : \,\, \phi \in C^\infty_c(\Omega)\,\, \mbox{and}\,\,
  [\phi]_{W^{s,p}_{00} (\Omega)}  \leq 1\right\} \ge \|f\|_{W^{-s,p'}(\Omega)},
\end{equation}  where here and throughout the article $f[\phi]$ denotes the action of the distribution $f$ on $\phi$. We denote the corresponding space of finite-norm distributions by  $\dot{W}^{-s,p'}(\Omega)$, and note that $C^\infty_c(\Omega)$ is dense in $\dot{W}^{-s,p'}(\Omega)$. It follows  from \eqref{I-1-norm} through a standard scaling argument that   
\begin{equation}\label{neg-frac}
[f]_{W^{s,p}(\R^n)}  \aleq  \|D f\|_{\dot{W}^{s-1,p}(\R^n)}.  
\end{equation} 
 
We conclude our presentation of fractional Sobolev spaces by a final useful observation.  For $n\ge 2$ let the differential and integral operators 
$\lap$, $\lap^{-1}$ and the Riesz transform $\mathcal R$ be respectively defined by the Fourier symbols $|\xi|^2$, $|\xi|^{-2}$ and  $i \xi /|\xi|$. It is known that $\lap^{-1}$ is a well-defined operator and coincides (modulo a conventional sign) with the Newtonian potential on  $L^2(\R^n) \supset C^\infty_c(\R^n)$.  By a classical theorem \cite[Corollary 5.2.8]{G14} the Riesz transform is a bounded operator from $L^p(\R^n)$ into $L^p(\R^n, \R^n)$ when $1<p<\infty$.  It is a linear operator commuting with differentiation, hence, via the  interpolation property \cite[Theorem 1.6]{Lun} and a scaling argument, and in view of the fact that $\mathcal R \cdot \mathcal R f= -f$ we obtain that 
$$
[\mathcal {R}f]_{W^{s,p}(\R^n, \R^n)} \aeq [f]_{W^{s,p}(\R^n)}
$$ for any $0<s<1$ and $1<p<\infty$. An argument by duality yields the   similar  estimate 
$$
\|\mathcal {R}f\|_{\dot{W}^{-s,p'}(\R^n, \R^n)} \aeq \|f\|_{\dot{W}^{-s,p'}(\R^n)}.
$$
Combining this fact with \eqref{neg-frac}, we obtain
\begin{equation}\label{grad-lap}
[D \lap^{-1} f]_{W^{s,p}(\R^n)} \aleq   \|D (D \lap^{-1}) f\|_{\dot{W}^{s-1, p}(\R^n)}  = \|\mathcal R \otimes \mathcal R f\|_{\dot{W}^{s-1, p}(\R^n)} \aeq \|f\|_{\dot{W}^{s-1, p}(\R^n)}.
\end{equation}
 
\subsection{Mollification and commutator estimates}\label{mollif}
For a given smooth bounded domain $\Omega\subset \R^n$  we fix an extension operator and for any $f \in W^{s,p} (\Omega)$, we  still denote  its extension by $f \in W^{s,p} (\R^n)$.   Throughout the paper, we fix  a standard mollifier $\varphi \in C^\infty_c(B^1)$, $\int_{B^1} \varphi=1$. For  any mapping $f\in W^{s,p}(\Omega)$ with $\Omega$ as above, we let $f_\e$ be the mollifications of the extension $f_\e := f\ast \varphi_\e$, where $\varphi_\e (x):= \frac{1}{\e^n} \varphi(\frac x\e)$.  The following estimates, which are reminiscent of \cite{CET04, cds, DLP20} will be used in our analysis:
\begin{lemma}\label{moli-est} Let $0<s<1$, $f,g\in W^{s,p}(\Omega)$, where either $\Omega\subset \R^n$ is smooth and bounded or $\Omega=\R^n$. Then  
\begin{itemize}
\smallskip
\item[{\rm (i)}] $\|f_\e - f\|_{L^p} \le o(\e^s)$.
\smallskip
\item[{\rm (ii)}] $\forall k\ge 1$ $\|\nabla^k f_\e\|_{L^p} \le o(\e^{s-k})$.
\smallskip
\item[{\rm (iii)}] If $p\ge 2$, $\forall k\ge 0$ $\|\nabla^k (f_\e g_\e - (fg)_\e)\|_{L^{p/2}} \le o(\e^{2s-k})$, 
\smallskip
 \end{itemize}
 where the bound function $o(\cdot)$ depends on $p, \varphi$ and the extension constant of $\Omega$.
 \end{lemma}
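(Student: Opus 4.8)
\textit{Proof plan.} I would reduce all three bounds to elementary translation estimates in $L^p$. Having fixed the extension operator, I treat $f,g$ as elements of $W^{s,p}(\R^n)$ with $[f]_{W^{s,p}(\R^n)}+[g]_{W^{s,p}(\R^n)}\lesssim\|f\|_{W^{s,p}(\Omega)}+\|g\|_{W^{s,p}(\Omega)}$, the constant being the norm of the extension; every estimate is then proved on $\R^n$, which a fortiori gives it on $\Omega$. Set $\tau_h f(x):=f(x-h)$, $G_f(h):=\|\tau_h f-f\|_{L^p(\R^n)}$ and, for $\delta>0$,
\[
\omega_f(\delta):=\Big(\int_{B_\delta}\frac{G_f(h)^p}{|h|^{n+sp}}\,dh\Big)^{1/p},
\]
and likewise $G_g,\omega_g$. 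By Fubini, $\int_{\R^n}G_f(h)^p|h|^{-n-sp}\,dh=[f]_{W^{s,p}(\R^n)}^p<\infty$, so $\omega_f(\delta)\to0$ as $\delta\to0$ by absolute continuity of the integral. The computational engine is the Hölder inequality obtained by inserting the weight $|h|^{(n+sp)/p}$,
\[
\int_{B_\e}G_f(h)\,dh\;\le\;\omega_f(\e)\Big(\int_{B_\e}|h|^{\frac{n+sp}{p-1}}\,dh\Big)^{1/p'}\;\lesssim\;\e^{\,n+s}\,\omega_f(\e),
\]
together with its $L^2$ counterpart, valid because $p\ge2$ (use the exponent pair $(p/2,(p/2)')$; for $p=2$ one just bounds $|h|^{n+2s}\le(2\e)^{n+2s}$):
\[
\int_{B_{2\e}}G_f(h)^2\,dh\;\lesssim\;\e^{\,n+2s}\,\omega_f(2\e)^2 .
\]

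For (i) and (ii) I would use that, for every $k\ge0$,
\[
\nabla^k f_\e(x)=\int_{B_\e}(\nabla^k\varphi_\e)(y)\,f(x-y)\,dy=\int_{B_\e}(\nabla^k\varphi_\e)(y)\big(f(x-y)-f(x)\big)\,dy,
\]
the second identity holding because $\int\varphi_\e=1$ when $k=0$ and $\int\nabla^k\varphi_\e=0$ when $k\ge1$ (integration by parts, $\varphi_\e$ compactly supported). Since $|\nabla^k\varphi_\e|\le\e^{-n-k}\|\varphi\|_{C^k}\,\mathbf 1_{B_\e}$, Minkowski's integral inequality and the engine yield
\[
\|\nabla^k f_\e\|_{L^p}\;\le\;\e^{-n-k}\|\varphi\|_{C^k}\int_{B_\e}G_f(y)\,dy\;\lesssim\;\e^{\,s-k}\,\omega_f(\e)=o(\e^{\,s-k}),
\]
which is (i) for $k=0$ and (ii) for $k\ge1$.

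For (iii) the key point is the two-fold mollification identity
\[
(fg)_\e(x)-f_\e(x)g_\e(x)=\tfrac12\int_{B_\e}\!\int_{B_\e}\varphi_\e(y)\varphi_\e(z)\big(f(x-y)-f(x-z)\big)\big(g(x-y)-g(x-z)\big)\,dy\,dz,
\]
checked by expanding the product and using $\int\varphi_\e=1$. After the substitution $u=x-y$, $v=x-z$ the $x$-dependence of the right-hand side resides only in the smooth factors $\varphi_\e(x-u)\varphi_\e(x-v)$, so one may differentiate $k$ times under the integral sign (legitimate since $f,g\in L^2_\loc$ and $\varphi_\e$ is smooth with compact support), distribute the derivatives by Leibniz, and change variables back. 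Bounding $|\partial^\alpha\varphi_\e|,|\partial^\beta\varphi_\e|\le\e^{-n-k}\|\varphi\|_{C^k}\mathbf 1_{B_\e}$, applying Minkowski's integral inequality in $L^{p/2}$ (here $p\ge2$), and estimating the $L^{p/2}$-norm of the product of the two differences by Hölder, using $\|\tau_yf-\tau_zf\|_{L^p}=G_f(z-y)$ and $|z-y|\le2\e$, gives
\[
\big\|\nabla^k\big(f_\e g_\e-(fg)_\e\big)\big\|_{L^{p/2}}\;\lesssim\;\e^{-2n-k}\int_{B_\e}\!\int_{B_\e}G_f(z-y)\,G_g(z-y)\,dy\,dz .
\]
Cauchy--Schwarz in $(y,z)$, then integrating first in $y$ so that $z-y$ sweeps $B_{2\e}$ and invoking the $L^2$ engine, bounds the double integral by $\e^{\,2n+2s}\,\omega_f(2\e)\,\omega_g(2\e)$, hence
\[
\big\|\nabla^k\big(f_\e g_\e-(fg)_\e\big)\big\|_{L^{p/2}}\;\lesssim\;\e^{\,2s-k}\,\omega_f(2\e)\,\omega_g(2\e)=o(\e^{\,2s-k}),
\]
which is (iii). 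In every case the implicit constants depend only on $n$, $k$, $p$, $\varphi$ and the norm of the extension operator.

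The main obstacle — and the only place where care is genuinely needed — is the choice of representation for the commutator in (iii). The familiar Constantin--E--Titi identity $(fg)_\e-f_\e g_\e=\int\varphi_\e(y)\,\delta_yf\,\delta_yg\,dy-(f_\e-f)(g_\e-g)$, with $\delta_yf(x):=f(x-y)-f(x)$, is cleaner but useless here: its first term equals $(fg)_\e-f_\e g-fg_\e+fg$, which for $s<1$ is not a $C^1$ function of $x$ and so cannot be differentiated term by term; the two-fold mollification identity is precisely what transfers all $x$-derivatives onto the smooth kernels $\varphi_\e$. Once that is set up, the remaining delicate step is the bookkeeping of the powers of $\e$ through the nested Hölder and Cauchy--Schwarz inequalities, which must cancel the factor $\e^{-2n-k}$ coming from $\|\nabla^k\varphi_\e\|_{L^\infty}$ and leave exactly $\e^{2s-k}$ times a quantity tending to $0$.
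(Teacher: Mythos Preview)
Your proof is correct. Parts (i) and (ii) are essentially the paper's argument, repackaged via Minkowski's inequality rather than taking $p$-th powers directly; the weighted H\"older step (your ``engine'') is exactly the same device.

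For (iii) you take a genuinely different route. The paper uses the Constantin--E--Titi representation
\[
f_\e g_\e - (fg)_\e = (f_\e - f)(g_\e - g) - \int_{\R^n} \delta_x f(y)\,\delta_x g(y)\,\varphi_\e(x)\,dx
\]
for $k=0$, and for $k\ge 1$ re-derives an analogous identity by expanding $\nabla^k(f_\e g_\e)$ with the Leibniz rule and regrouping, which produces cross terms $(f_\e-f)\nabla^k g_\e$, $(g_\e-g)\nabla^k f_\e$, and $\nabla^j f_\e\otimes\nabla^{k-j}g_\e$ to be bounded separately via (i)--(ii). Your double-mollification identity
\[
(fg)_\e - f_\e g_\e = \tfrac12\iint \varphi_\e(y)\,\varphi_\e(z)\,\big(f(x-y)-f(x-z)\big)\big(g(x-y)-g(x-z)\big)\,dy\,dz
\]
is cleaner: after the substitution all $x$-derivatives land on the smooth kernels, so every $k\ge 0$ is handled uniformly with no case split and no auxiliary cross terms. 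Your closing remark that the CET identity is ``useless here'' is slightly too strong --- the paper shows it can be made to work by re-deriving the identity at each order $k$ rather than differentiating the $k=0$ version --- but your representation avoids that bookkeeping entirely. The price you pay is the extra Cauchy--Schwarz in $(y,z)$ and the $L^2$ engine, both harmless since $p\ge 2$.
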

 \begin{proof}

(i) By the extension property of smooth bounded domains it is sufficient to prove the estimates for $\Omega = \R^n$. Let  for $x,y\in \R^n$ 
$$
\delta_x f(y):= f(y-x) -f(y).  
$$  We have  by H\"older's inequality
$$
\begin{aligned}
\|f_\e - f\|^p_{L^p} & = \int_{\R^n} \Big |\int_{\R^n} \delta_x f(y) \varphi_\e (x) \, dx\Big |^p dy  = \int_{\R^n} \Big |\int_{\{|x|\le \e\}} \delta_x f(y) \varphi_\e (x) \, dx\Big |^p dy
\\ & \le \int_{\R^n}  
\Big  (\int_{\{|x|\le \e\}} (|x|^{-(s+\frac np)} |\delta_x f(y)|)^p \, dx \Big ) 
\Big (\int_{\{|x|\le \e\}} (|x|^{(s+\frac np)}|\varphi_\e (x)|)^{p'} \Big )^{\frac p{p'}}   dy   
\\ & \le C \e^{sp} \int_{\R^n} \int_{\{|x|\le \e\}}  |x|^{-(sp+n)} |f(y-x)-f(y)|^p \, dx dy \le \e^{sp}o(1), 
\end{aligned}
$$ where $\frac1p + \frac1{p'} =1$, and the last inequality is a consequence of the dominated convergence and Fubini theorems, in view of the fact that the integrand belongs to  $L^1(\R^n \times \R^n)$.  

(ii) Similarly as for (i) we write:
$$
\begin{aligned}
\|\nabla^k f_\e\|^p_{L^p} & = \int_{\R^n} \Big |\int_{\R^n}  f(y-x)   \nabla^k (\varphi_\e) (x) \, dx\Big |^p dy = \int_{\R^n} \Big |\int_{\R^n} \delta_x f(y) \e^{-k} (\nabla^k \varphi)_\e (x) \, dx\Big |^p dy   \\ & \le \int_{\R^n}  
\Big  (\int_{\{|x|\le \e\}} (|x|^{-(s+\frac np)} |\delta_x f(y)|)^p \, dx \Big ) 
\Big (\int_{\{|x|\le \e\}}   (\e^{-k} |x|^{(s+\frac np)}|(\nabla^k \varphi)_\e (x)|)^{p'} \Big )^{\frac p{p'}}   dy   
\\ & \le C \e^{(s-k)p} \int_{\R^n} \int_{\{|x|\le \e\}}  |x|^{-(sp+n)} |f(y-x)-f(y)|^p \, dx dy \le \e^{(s-k)p}o(1),
\end{aligned}
$$ which is the desired estimate. 

(iii)  First we observe that for all $k\ge 0$
\begin{equation}\label{est-delta2-x}
\begin{aligned}
\hspace{0.13in} \Big |\int_{\R^n} \delta_x f (y) \delta_x g (y) \nabla^k (\varphi_\e) (x) dx\Big |   & = 
 \Big | \int_{\R^n}  \frac {\delta_x f (y)}{|x|^{s+n/p}} \frac {\delta_x g (y)} {|x|^{s+n/p}} |x|^{2(s+ n/p)} \e^{-k} (\nabla^k \varphi)_\e (x)  dx \Big |   
  \\ & \hspace{-2.15in} \le    \Big \|\frac {\delta_x f (y)}{|x|^{s+n/p}} \frac {\delta_x g (y)} {|x|^{s+n/p}}   \Big \|_{L^{\frac p2}(\{|x|\le \e\})}    
  \Big \| |x|^{2(s+ n/p)} \e^{-k} (\nabla^k \varphi)_\e (x) \Big \|_{L^\frac{p}{p-2}(\{|x|\le \e\})}       
  \\   & \hspace{-2.15in} \le C \e^{2s-k}  \Big \|\frac {\delta_x f (y)}{|x|^{s+n/p}}\Big \|_{L^{p}(\{|x|\le \e\})} \Big \| \frac {\delta_x g (y)} {|x|^{s+n/p}} \Big \|_{L^{p}(\{|x|\le \e\})}. 
  \end{aligned}
\end{equation} For $k=0$ we write for all $y\in \R^n$: 
\begin{equation*}
(f_\e g_\e - (fg)_\e) (y)= (f_\e -f ) (g_\e -g) (y) - \int_{\R^n} \delta_x f (y) \delta_x g (y)  \varphi_\e (x)\, dx.
\end{equation*} The  $L^{p/2}$ norms of the first   term is estimated by $o(\e^{2s})$, using part  (i), (ii) and H\"older's inequality. Now,  
integrating the $\frac{p}{2}$th power of  the second term   over the parameter $y$, and  using \eqref{est-delta2-x}  will yield the $o(1)$ factor and complete the proof. 

 If $k\ge 1$, it is sufficient to note that for all $y\in \R^n$:
$$
\begin{aligned}
\nabla^k (f_\e g_\e - (fg)_\e) (y)& =    \sum_{j=0}^k   \nabla^j f_\e  \otimes \nabla^{k-j} g_\e (y) - \nabla^k( fg)_\e(y) \\ & = 
\sum_{j=1}^{k-1} \nabla^j f_\e \otimes  \nabla^{k-j} g_\e (y)  + (f_\e -f)\nabla^k g_\e (y) + (g_\e - g)\nabla^k f_\e (y)   \\ 
& -  \int_{\R^n} \delta_x f (y) \delta_x g (y) \nabla^k (\varphi_\e) (x)\, dx.
\end{aligned}
$$ The  $L^{p/2}$ norms of the terms in the first summation are estimated by $o(\e^{2s-k})$, using part (ii) and H\"older's inequality. The second and third terms are estimated using (i).  Finally, integrating its $\frac p2$th power  of the last term  and once more applying \eqref{est-delta2-x}  leads to an $o(\e^{2s-k})$ control as desired.  
\end{proof}

\begin{remark}
The estimates in \Cref{moli-est} are not optimal and seem to characterize the spaces 
$b^s_{p,\infty}$ \cite[Definition 2.1.3/1]{RuSi}, which are larger than $W^{s,p}$. We conjecture that results of the paper  can still be achieved for  the borderline space $b^{2/3}_{3,\infty}$ through the same approach. 
\end{remark}

\begin{corollary}\label{commute-convergence}
Let $s\in  (0, 1)$ and $p\ge 2$. If $f,g\in W^{s,p}(\Omega)$, where either $\Omega$ is smooth and bounded or $\Omega=\R^n$, then 
$$
\lim_{\e \to 0} \|f_\e g_\e - (fg)_\e \|_{W^{2s, \frac p2}}=0. 
$$
\end{corollary}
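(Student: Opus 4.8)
Since $f_\e$, $g_\e$ and $(fg)_\e$ are, by definition, mollifications of the fixed $W^{s,p}(\R^n)$-extensions of $f$, $g$ and $fg$ — note that $fg\in W^{s,p/2}(\Om)$ because $[fg]_{W^{s,p/2}}\aleq \|f\|_{L^p}[g]_{W^{s,p}}+\|g\|_{L^p}[f]_{W^{s,p}}$ by H\"older — and since the $W^{2s,p/2}(\Om)$-norm is dominated by the $W^{2s,p/2}(\R^n)$-norm, it suffices to show that $\|h_\e\|_{W^{2s,p/2}(\R^n)}\to 0$, where $h_\e:=f_\e g_\e-(fg)_\e$. Because $p\ge 2$, \Cref{moli-est}(iii) applies and gives, for every integer $k\ge 0$,
\[
\|\na^k h_\e\|_{L^{p/2}(\R^n)}=o(\e^{2s-k})\qquad\text{as }\e\to 0^+ .
\]
In particular $h_\e\to 0$ in $L^{p/2}$, and the task is to upgrade this to convergence in the fractional norm.

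The plan is to interpolate the bounds above by means of the elementary Gagliardo--Nirenberg inequality
\[
[v]_{W^{\sigma,q}(\R^n)}\aleq \|v\|_{L^q(\R^n)}^{1-\sigma}\,\|\na v\|_{L^q(\R^n)}^{\sigma},\qquad 0<\sigma<1,\ \ q\ge 1,
\]
which follows by splitting the Gagliardo double integral of $v$ into the regions $|x-y|\le\rho$ and $|x-y|>\rho$, estimating them respectively by $c\rho^{q(1-\sigma)}\|\na v\|_{L^q}^q$ and $c\rho^{-\sigma q}\|v\|_{L^q}^q$, and optimising in $\rho$. If $0<2s<1$, apply this with $v=h_\e$, $\sigma=2s$, $q=p/2$ to obtain
\[
[h_\e]_{W^{2s,p/2}}\aleq\big(o(\e^{2s})\big)^{1-2s}\big(o(\e^{2s-1})\big)^{2s}=o\!\left(\e^{\,2s(1-2s)+(2s-1)2s}\right)=o(1),
\]
and, combined with $\|h_\e\|_{L^{p/2}}=o(\e^{2s})$, this gives $\|h_\e\|_{W^{2s,p/2}}\to 0$.

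If $2s\ge 1$ (so $\tfrac12\le s<1$), use $\|h_\e\|_{W^{2s,p/2}}\aeq\|h_\e\|_{W^{1,p/2}}+[\na h_\e]_{W^{2s-1,p/2}}$. The first summand equals $o(\e^{2s})+o(\e^{2s-1})=o(\e^{2s-1})=o(1)$ since $2s-1\ge 0$; when $2s=1$ there is no seminorm term and we are done. When $2s>1$, apply the Gagliardo--Nirenberg inequality to $v=\na h_\e$ with $\sigma=2s-1\in(0,1)$:
\[
[\na h_\e]_{W^{2s-1,p/2}}\aleq\|\na h_\e\|_{L^{p/2}}^{2-2s}\,\|\na^2 h_\e\|_{L^{p/2}}^{2s-1}=\big(o(\e^{2s-1})\big)^{2-2s}\big(o(\e^{2s-2})\big)^{2s-1}=o\!\left(\e^{(2s-1)[(2-2s)+(2s-2)]}\right)=o(1).
\]
Hence $\|h_\e\|_{W^{2s,p/2}}\to 0$ in this case as well, completing the proof.

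The only ingredient beyond \Cref{moli-est} and the exponent bookkeeping is the Gagliardo--Nirenberg interpolation inequality above. I do not expect a serious obstacle here, but one point of care is that the integrability exponent $q=p/2$ may equal $1$ (precisely when $p=2$), a regime in which abstract real-interpolation arguments for Besov spaces are less convenient; I would therefore supply the short direct splitting argument indicated above, which is valid for all $q\ge 1$, rather than invoke interpolation theory.
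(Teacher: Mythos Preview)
Your proof is correct and follows essentially the same route as the paper's: split into the cases $2s<1$ and $2s\ge 1$, and in each case interpolate the $L^{p/2}$ bounds on $h_\e$, $\nabla h_\e$, $\nabla^2 h_\e$ from \Cref{moli-est}(iii) to control the $W^{2s,p/2}$-norm, with the exponents conspiring to yield $o(1)$. The only difference is cosmetic: the paper invokes the abstract interpolation inequality $\|h\|_{W^{\theta,q}}\aleq\|h\|_{L^q}^{1-\theta}\|h\|_{W^{1,q}}^{\theta}$ from \cite[Corollary~1.1.7]{Lun}, whereas you supply an elementary direct proof of the seminorm Gagliardo--Nirenberg inequality via the $|x-y|\lessgtr\rho$ splitting, which has the small advantage of covering the endpoint $q=p/2=1$ without appeal to real-interpolation machinery.
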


\begin{proof} 
The idea is to use the interpolation inequality \cite[Corollary 1.1.7]{Lun}
$$
\|h \|_{W^{\theta,\frac p2}} \aleq \|h \|^{1-\theta}_{L^{\frac p2}} \|h \|^\theta_{W^{1,\frac p2}} 
$$ for all $h\in W^{1,\frac p2}$ and $0 \le  \theta\le 1$. For $0<s\le \frac 12$, we apply \Cref{moli-est}(iii) for $k=0,1$ to $h:= f_\e g_\e - (fg)_\e$ with $\theta=2s$ to obtain:
$$
\| f_\e g_\e - (fg)_\e\|_{W^{2s, \frac p2}} \le o(\e^{(1-2s)2s + 2s(2s-1)}) = o(1). 
$$   Similarly, if $\frac 12 < s<1$, we let $h := \nabla (f_\e g_\e - (fg)_\e)$ and $\theta = 2s-1$ and we apply again \Cref{moli-est}(iii) for $k=0,1,2$, and the interpolation estimate, which together yield:
\[
\begin{array}{c}
\| f_\e g_\e - (fg)_\e)\|_{L^\frac p2}  \le o(\e^{2s}) \\  \\ \mbox{and} \\ \\  \| \nabla(f_\e g_\e - (fg)_\e)\|_{W^{2s-1, \frac p2}} \le o(\e^{(1-(2s-1))(2s-1) ) + (2s-1)(2s-2) }) = o(1).  
 \end{array}  
\] \end{proof}

We  will also need the following elementary estimate,  which in fact   states the known embedding of $W^{s, \frac ns}(\R^n)$ into VMO \cite[Section I.2, Example 2]{BNi95}:
\begin{lemma}\label{VMO}
Let $\Omega \subset \R^n$ be an open set and $f\in W^{s,\frac ns}(\Omega)$. Then for all $x\in \Omega$, 
and $\e<{\rm dist} (x, \partial \Omega)$,  
$$
\lim_{\e \to 0} \fint_{B_\e(x)}|f-f_\e(x)|^\frac ns =0.
$$ 
\end{lemma}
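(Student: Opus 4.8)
The plan is to reduce the claim to the statement that $f \in W^{s,\frac ns}(\Omega)$ has vanishing mean oscillation at every interior point, and then to prove the latter directly from the Gagliardo seminorm via a rescaled Poincaré-type estimate. First I would fix $x \in \Omega$ and, for $\e < \frac12\dist(x,\partial\Omega)$, estimate the average of $|f - f_\e(x)|^{\frac ns}$ over $B_\e(x)$ by splitting it into two pieces: the oscillation of $f$ around its own average $f_{B_\e(x)} := \fint_{B_\e(x)} f$, and the difference $|f_{B_\e(x)} - f_\e(x)|$. Using convexity of $t \mapsto t^{\frac ns}$ (note $\frac ns > 1$ is not needed; we only need monotonicity of powers and the elementary inequality $|a+b|^q \le 2^{q-1}(|a|^q+|b|^q)$), it suffices to show that each of
\[
\fint_{B_\e(x)} \big|f - f_{B_\e(x)}\big|^{\frac ns} \quad\text{and}\quad \big|f_{B_\e(x)} - f_\e(x)\big|^{\frac ns}
\]
tends to $0$ as $\e \to 0$.

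For the first term, the key step is the fractional Poincaré inequality on a ball: for $g \in W^{s,p}(B_r)$,
\[
\fint_{B_r} \big|g - g_{B_r}\big|^{p} \dx \;\aleq\; r^{sp - n} \int_{B_r}\int_{B_r} \frac{|g(y)-g(z)|^p}{|y-z|^{n+sp}}\,dy\,dz,
\]
which follows by writing $g(y) - g_{B_r} = \fint_{B_r}(g(y)-g(z))\,dz$, applying Jensen, and bounding $|y-z|^{-(n+sp)} \le$ (a constant depending on $n,s,p$) $\cdot\, r^{-(n+sp)}$ up to the trivial factor $r^{2n}$ from the double volume; here $p = \frac ns$, so $sp = n$ and the prefactor $r^{sp-n}$ is exactly $1$. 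Hence $\fint_{B_\e(x)} |f - f_{B_\e(x)}|^{\frac ns} \aleq [f]_{W^{s,\frac ns}(B_\e(x))}^{\frac ns} \to 0$ as $\e \to 0$, since $[f]_{W^{s,\frac ns}(B_\e(x))}^{\frac ns}$ is the integral of a fixed $L^1(\Omega\times\Omega)$ function over the shrinking set $B_\e(x)\times B_\e(x)$ and thus vanishes by absolute continuity of the integral. For the second term, I would write $f_\e(x) = \int \varphi_\e(x-y) f(y)\,dy$ and $f_{B_\e(x)} = \fint_{B_\e(x)} f$, both of which are averages of $f$ over scales comparable to $\e$; their difference is bounded by $C\fint_{B_\e(x)}|f - f_{B_\e(x)}|\,dy \le C\big(\fint_{B_\e(x)}|f - f_{B_\e(x)}|^{\frac ns}\big)^{\frac sn}$ using $\supp\varphi_\e \subset B_\e$, $\|\varphi_\e\|_{L^\infty} \aleq \e^{-n}$, and Jensen, so it is controlled by the first term and also vanishes.

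The only mildly delicate point — and the place I'd be most careful — is that both "centering" quantities ($f_\e(x)$ in the statement and the ball-average $f_{B_\e(x)}$ I introduce) must be compared on the \emph{same} ball scale; this is why I first replace $f_\e(x)$ by $f_{B_\e(x)}$, for which the Poincaré estimate is literally available, rather than trying to estimate against the mollified value directly. No new ingredients beyond Jensen's inequality, the scaling of the Gagliardo double integral, and absolute continuity of the Lebesgue integral are needed; in particular the borderline exponent $sp = n$ is exactly what makes the scale-invariant Poincaré constant finite and $\e$-independent, which is the whole content of the $W^{s,\frac ns} \hookrightarrow \mathrm{VMO}$ embedding invoked in the text.
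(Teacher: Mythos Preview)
Your proof is correct. It is essentially the first of the two arguments the paper gives: the paper first remarks that the claim follows from a fractional Poincar\'e inequality on $B_\e(x)$ (with the centering constant taken to be $f_\e(x)$ rather than the ball average, citing \cite{DD18}), and your argument is exactly a self-contained implementation of this---you prove the Poincar\'e inequality with the ball average as center, then bridge to $f_\e(x)$ via the elementary estimate $|f_\e(x)-f_{B_\e(x)}|\aleq \fint_{B_\e(x)}|f-f_{B_\e(x)}|$.

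The paper then offers a second, alternative proof using a different intermediary: it splits $\|f-f_\e(x)\|_{L^{n/s}(B_\e(x))}\le \|f-f_\e\|_{L^{n/s}(B_\e(x))}+\|f_\e-f_\e(x)\|_{L^{n/s}(B_\e(x))}$, controls the first term by the mollification estimate \Cref{moli-est}(i), and the second by the classical (smooth) Poincar\'e inequality combined with \Cref{moli-est}(ii). That route recycles the mollifier lemma already proved in the paper, while yours is more self-contained and makes the scale invariance $sp=n$ of the Gagliardo seminorm explicit; both are equally short and equally elementary.
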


\begin{proof}
It is sufficient to show that 
$$
\|f - f_\e (x)\|_{L^\frac ns (B_\e(x))} \le o(\e^s),
$$ which follows from the a variant of fractional Poincar\'e inequality  which is valid for all $s\in (0,1)$ and $ 1 \le p < \infty$:
$$
\|f - f_\e (x)\|_{L^p(B_\e(x))}  \le  C \e^s [f]_{W^{s,p}(B_\e(x))},
$$ and can be proved similarly as in \cite[Proposition 2.1]{DD18}, where we have replaced the average of $f$ on the ball by $f_\e(x)$. 

Here we provide another proof.  For a fixed $x \in \Omega$ we have by \Cref{moli-est}(i)  and $p= \frac ns$:  
$$ 
\|f- f_\e (x)\|_{L^\frac ns(B_\e(x))} \le  \|f - f_\e\|_{L^\frac ns (B_\e(x))} + \|f_\e- f_\e(x)\|_{L^\frac ns (B_\e(x))} \le o(\e^s)  +  \|f_\e- f_\e(x)\|_{L^\frac ns (B_\e(x))}. 
$$ It remains to bound the second term, for which can apply the standard Poincar\'e  inequality for any $f\in L^1(\Omega)$ with the proper scaling on the ball $B_\e(x)$ 
\begin{equation}\label{mol-poin}
\|f_\e - f_\e(x)\|_{L^{\frac ns}(B_\e(x))} \le C \e \|\nabla f_\e\|_{L^{\frac ns} (B_\e(x))},
\end{equation}
to obtain, this time via \Cref{moli-est}(ii) the desired estimate.   Note that we have the right to use  $f_\e(x)$ as the normalization constant  
 since $\frac ns>n$ and $W^{1,\frac ns}$ embeds in $C^{0,{1-s}}$. 
 \end{proof}

\subsection{Distributional products in fractional Sobolev spaces} 
 
 In \Cref{2nd} we will define a notion of second fundamental form for fractional Sobolev isometries through  the first part of  the following result.  We will present a proof following the methodology of   \cite{LS20}, which then is adapted to subsequently show  the complementary second part, which, in particular, will be used in proving \Cref{th:lambdajac} in \Cref{natural}.
 
\begin{proposition}\label{weak-product}
Let $n \geq 2$, $1/2<s<1$, $f \in W^{s, \frac ns}(\R^n)$.

{\rm (i)} Let $\mu \in W^{s, \frac{n}{s}}(\R^n) \cap L^\infty(\R^n)$. Then for any $\alpha \in \{1,\ldots,n\}$, the product $\mu \partial_\alpha f$ is well-defined as a distribution on $\R^n$ and 
$$ 
 \|\mu \partial_\alpha f\|_{\dot{W}^{s-1, \frac ns}(\R^n)}   \aleq [f]_{W^{s, \frac{n}{s}}(\R^n)} ([\mu] _{W^{s, \frac{n}{s}}(\R^n)}+ \|\mu\|_{L^\infty(\R^n)}).
$$ 
{\rm (ii)} Let $\mu_k \in W^{s, \frac{n}{s}}(\R^n) \cap L^\infty(\R^n)$ with 
\[
 \sup_k\,\Big (\|\mu_k\|_{L^{\frac ns} (\R^n)}  +  \|\mu_k\|_{L^\infty(\R^n)} \Big)  < \infty.
\]  Assume moreover that  $[\mu_k]_{W^{s,\frac{n}{s}}(\R^n)} \xrightarrow{k \to \infty} 0$. Then for any $\alpha \in \{1,\ldots,n\}$,
\[
\|\mu_k \partial_\alpha f\|_{\dot{W}^{s-1, \frac{n}{s}}(\R^n)} \xrightarrow{k \to \infty} 0.
\]
 
\end{proposition}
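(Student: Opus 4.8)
The plan is to establish both parts from a single Littlewood--Paley (Bony) decomposition of the formal product $\mu\,\partial_\alpha f$, in the spirit of \cite{RuSi} and following the methodology of \cite{LS20}. I would begin with two reductions. First, for $0<s<1$ and $1<\tfrac ns<\infty$ the Gagliardo seminorm $[\cdot]_{W^{s,n/s}(\R^n)}$ is equivalent to the seminorm of the homogeneous Besov space $\dot B^{s}_{n/s,n/s}(\R^n)$, and, via the dual definition \eqref{dual} together with \eqref{neg-frac}, the negative-order seminorm $[\cdot]_{W^{s-1,n/s}(\R^n)}$ is equivalent to the seminorm of $\dot B^{s-1}_{n/s,n/s}(\R^n)$ (cf. \cite{RuSi,Tri}); so it suffices to work with homogeneous Besov seminorms. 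Second, the critical embedding $\dot B^{s}_{n/s,n/s}(\R^n)\hookrightarrow \dot B^{0}_{\infty,n/s}(\R^n)\hookrightarrow \dot B^{0}_{\infty,\infty}(\R^n)$ --- which is the Besov face of the fact $W^{s,n/s}\hookrightarrow \mathrm{VMO}$ used in \Cref{VMO} --- gives $f\in \dot B^{0}_{\infty,n/s}$, hence $\partial_\alpha f\in \dot B^{-1}_{\infty,n/s}\hookrightarrow \dot B^{-1}_{\infty,\infty}$, while of course $\partial_\alpha f\in \dot B^{s-1}_{n/s,n/s}$ with $\|\partial_\alpha f\|_{\dot B^{s-1}_{n/s,n/s}}\aeq [f]_{W^{s,n/s}}$. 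Writing $P_j$ for the Littlewood--Paley blocks, $S_j=\sum_{\ell\le j-1}P_\ell$, $T_a b=\sum_j S_{j-2}a\,P_j b$ and $R(a,b)=\sum_{|j-k|\le 1}P_j a\,P_k b$, I would \emph{define} $\mu\,\partial_\alpha f := T_\mu(\partial_\alpha f)+T_{\partial_\alpha f}\mu+R(\mu,\partial_\alpha f)$ and prove that each summand converges in $\dot B^{s-1}_{n/s,n/s}(\R^n)$; this simultaneously makes sense of the product and produces the estimate, and a routine check shows the result is independent of the dyadic partition and agrees with the pointwise product when $\mu$ is smooth.

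For part (i) I would estimate the three pieces in $\dot B^{s-1}_{n/s,n/s}(\R^n)$. The low--high paraproduct $T_\mu(\partial_\alpha f)$ is controlled by the $L^\infty$-multiplier bound $\|S_{j-2}\mu\|_{L^\infty}\aleq \|\mu\|_{L^\infty}$, giving $\|T_\mu(\partial_\alpha f)\|_{\dot B^{s-1}_{n/s,n/s}}\aleq \|\mu\|_{L^\infty}\,\|\partial_\alpha f\|_{\dot B^{s-1}_{n/s,n/s}}\aeq \|\mu\|_{L^\infty}[f]_{W^{s,n/s}}$; this is the one place the hypothesis $\mu\in L^\infty$ is essential, since $\dot B^{0}_{\infty,n/s}$ does not embed into $\dot B^{0}_{\infty,1}$ and so $[\mu]_{W^{s,n/s}}$ alone does not bound $\sup_j\|S_{j-2}\mu\|_{L^\infty}$. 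The high--low paraproduct $T_{\partial_\alpha f}\mu$ is a negative-regularity paraproduct: since $\partial_\alpha f\in \dot B^{-1}_{\infty,\infty}$ and $\mu\in \dot B^{s}_{n/s,n/s}$ one gets $\|T_{\partial_\alpha f}\mu\|_{\dot B^{s-1}_{n/s,n/s}}\aleq [f]_{W^{s,n/s}}[\mu]_{W^{s,n/s}}$. The resonant term $R(\mu,\partial_\alpha f)$ is the crux: the resonant product estimate needs the sum of the two regularity indices to be positive, and here it equals $s+(s-1)=2s-1>0$, positive exactly because $s>\tfrac12$; one then obtains $R(\mu,\partial_\alpha f)\in \dot B^{2s-1}_{n/(2s),n/(2s)}(\R^n)$ with norm $\aleq [\mu]_{W^{s,n/s}}[f]_{W^{s,n/s}}$, and the Sobolev--Besov embedding $\dot B^{2s-1}_{n/(2s),n/(2s)}\hookrightarrow \dot B^{s-1}_{n/s,n/s}$ returns it to the target. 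Summing the three contributions yields (i). The main obstacle is precisely this resonant interaction, and the assumption $\tfrac12<s<1$ is what makes $2s-1>0$; equivalently, in a direct Gagliardo computation one would split $\mu(x)\partial_\alpha f$ via $\mu(x)-\mu(y)$ and the ``diagonal'' double integral converges only because $2s>1$.

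For part (ii) I would isolate the single offending term. By the proof of (i), all contributions to $[\mu_k\,\partial_\alpha f]_{W^{s-1,n/s}}$ except the low--high paraproduct $T_{\mu_k}(\partial_\alpha f)$ are bounded by $C[\mu_k]_{W^{s,n/s}}[f]_{W^{s,n/s}}\to 0$. For the remaining term, fix $\varepsilon>0$ and split $f=P_{\le N}f+P_{>N}f$. Since $[P_{>N}f]_{W^{s,n/s}}\to 0$ as $N\to\infty$ (tail of a convergent $\ell^{n/s}$ series), part (i) applied with $P_{>N}f$ in place of $f$ gives, uniformly in $k$, $[\mu_k\,\partial_\alpha P_{>N}f]_{W^{s-1,n/s}}\aleq [P_{>N}f]_{W^{s,n/s}}\big(\sup_k[\mu_k]_{W^{s,n/s}}+\sup_k\|\mu_k\|_{L^\infty}\big)<\varepsilon$ once $N$ is large. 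For the band-limited piece, $T_{\mu_k}(\partial_\alpha P_{\le N}f)=\sum_{j\le N+2}S_{j-2}\mu_k\,P_j(\partial_\alpha P_{\le N}f)$ involves only $O(N)$ frequency blocks of $\mu_k$ and is band-limited at scale $\lesssim 2^N$, so one Bernstein step bounds it by $C_N\,\|\mu_k\|_{L^{q}}\,[f]_{W^{s,n/s}}$ for a suitable $q\in(\tfrac ns,\infty)$, and $\|\mu_k\|_{L^q}\to 0$: indeed $[\mu_k]_{W^{s,n/s}}\to 0$ yields $\|\mu_k\|_{L^{n/s}}\to 0$ either directly when $\mu_k\to 0$ in $W^{s,n/s}(\R^n)$ or, when the $\mu_k$ are supported in a fixed compact set (the situation in the application to \Cref{th:lambdajac}), by the fractional Poincar\'e inequality, after which interpolation with $\sup_k\|\mu_k\|_{L^\infty}<\infty$ controls all $L^q$, $q\in[\tfrac ns,\infty)$. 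Freezing $N$ and letting $k\to\infty$ makes this piece $<\varepsilon$ as well, so $[\mu_k\,\partial_\alpha f]_{W^{s-1,n/s}}<C\varepsilon$ eventually, proving (ii). The subtle point is exactly the band-limited part of the low--high paraproduct, which the Gagliardo seminorm of $\mu_k$ does not see and which is controlled only via the genuine (sub-$L^\infty$) convergence of $\mu_k$.
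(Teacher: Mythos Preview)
Your treatment of part (i) is correct but takes a genuinely different route from the paper. You use a Bony paraproduct decomposition $\mu\,\partial_\alpha f=T_\mu(\partial_\alpha f)+T_{\partial_\alpha f}\mu+R(\mu,\partial_\alpha f)$ and estimate each piece in $\dot B^{s-1}_{n/s,n/s}$; the paper instead \emph{defines} $\mu\,\partial_\alpha f[\phi]$ via harmonic extensions $\mu^h,f^h,\phi^h$ to $\R^{n+1}_+$ as $-\int_{\R^{n+1}_+}\partial_{n+1}(\mu^h\,\partial_\alpha f^h\,\phi^h)$, and splits into the two integrals $\int|D\mu^h||Df^h||\phi^h|$ (bounded by $[\mu]_{W^{s,n/s}}[f]_{W^{s,n/s}}$ via the Triebel--Lizorkin characterisation and the embedding $W^{s,n/s}\hookrightarrow \dot F^{1/2}_{2n,2}$, which is where $s>\tfrac12$ enters) and $\int|\mu^h||Df^h||D\phi^h|$ (bounded by $\|\mu\|_{L^\infty}[f]_{W^{s,n/s}}$ via the maximum principle). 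Your resonant and high--low pieces correspond to the first integral, your low--high paraproduct $T_\mu(\partial_\alpha f)$ to the second; the two arguments are the Fourier-side and extension-side incarnations of the same bilinear estimate, with your version making more transparent which summand consumes which norm of $\mu$.

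Your argument for part (ii), however, has a gap. The assertion that $T_{\mu_k}(\partial_\alpha P_{\le N}f)$ ``involves only $O(N)$ frequency blocks of $\mu_k$'' is not correct: each $S_{j-2}\mu_k$ already contains \emph{all} low frequencies of $\mu_k$, and the sum over $j\le N+2$ is still infinite in the homogeneous setting. More importantly, your control of that band-limited piece ultimately rests on $\|\mu_k\|_{L^q}\to 0$ for some finite $q$, which does \emph{not} follow from the stated hypotheses $[\mu_k]_{W^{s,n/s}}\to 0$ and $\sup_k\|\mu_k\|_{L^\infty}<\infty$; you concede as much by invoking full-norm convergence or fixed compact support as side assumptions. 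The paper handles exactly this term differently: in the extension picture it is $\|\mu_k^h\,t^{1-\frac{s}{n}-s}Df^h\|_{L^{n/s}(\R^{n+1}_+)}$, and since $t^{1-\frac{s}{n}-s}Df^h$ is a fixed function in $L^{n/s}(\R^{n+1}_+)$ and $|\mu_k^h|\le \sup_k\|\mu_k\|_{L^\infty}$, the paper concludes by dominated convergence after extracting a subsequence with $\mu_k^h\to 0$ a.e.\ in $\R^{n+1}_+$. The analogous repair on the paraproduct side would be discrete dominated convergence in the series $\sum_j 2^{jn}\|P_jf\|_{L^{n/s}}^{n/s}\|S_{j-2}\mu_k\|_{L^\infty}^{n/s}$, for which one needs $\|S_{j}\mu_k\|_{L^\infty}\to 0$ for each fixed $j$ --- a pointwise-type input of the same flavour as the paper's, and not obtainable from the Bernstein step you propose.
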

\begin{proof}
We will first show (i).  Remember that  the harmonic extension of $f \in L^1 (\R^n) \cap L^\infty (\R^n)$ to  $\R^{n+1}_+$ is defined by the Poisson extension operator \cite[Example 2.1.13]{G14} 
\begin{equation}\label{poisson}
f^h(t,x):= C_n \int_{\R^n} \frac{t}{(|x-z|^2 + t^2)^\frac {n+1}{2}} f(z) \, dz
\end{equation}  and the operator can be extended to $W^{s, \frac ns}(\R^n)$ \cite{LS20, Ing20}.  Let  $\phi \in C^\infty_c(\R^n)$ and  let $\mu^h$, $f^h$, and $\phi^h$ be the harmonic extensions of $\mu$, $f$, and $\phi$, respectively, on to $\R^{n+1}_+$.

The one-dimensional integration by parts \cite{LS20} allows us to define 
\begin{equation}\label{eq:dual2:1} \mu \partial_\alpha f [\phi] :=-\int_{\R^{n+1}_+} \partial_{n+1}\left(\mu^h \partial_\alpha f^h\, \phi^h\right).
\end{equation} By \eqref{dual}, we are going to estimate  
\[
  \|\mu \partial_\alpha f\|_{\dot{W}^{s-1,\frac ns}(\R^n)}  = \sup \left\{\mu \partial_\alpha f [\phi] : \,  \phi\in C^\infty_c(\R^n) \,\,\mbox{and}\,\, [\phi]_{W^{1-s,\frac{n}{n-s}}(\R^n)} \leq 1\right\}.
\]
So let us fix one $\phi \in C_c^\infty(\R^n)$ with $[\phi]_{W^{1-s,\frac{n}{n-s}}(\R^n)}  \leq 1$. We bound
\[
\Big | \mu \partial_\alpha f [\phi]\Big | \aleq  \int_{\R^{n+1}_+} \left|D\mu^h\right|\, \left|Df^h\right| \left|\phi^h\right| +\left|\mu_k^h\right|\, \left|Df^h\right| \left|D\phi^h\right|,
\]
as we can always tackle the $\partial_{\alpha}$ term (which is in $\R^n$-direction) via integration by parts. Here and hereafter, $D$ is the $\R^{n+1}$-dimensional gradient. 

We first claim that
\begin{equation}\label{eq:dual2:33463}
 \int_{\R^{n+1}_+} |D\mu^h|\, |Df^h| |\phi^h| \aleq [\mu]_{W^{s, \frac{n}{s}}(\R^n)}\, [f]_{W^{s, \frac{n}{s}}(\R^n)} [\phi]_{W^{1-s, \frac{n}{n-s}}(\R^n)}
\end{equation}

We have 
\[
\begin{split}
 &\int_{\R^{n+1}_+} |D\mu^h|\, |Df^h| |\phi^h|\\
 \leq\,&\int_{\R^{n}} |\mathcal{M} \phi(x)| \int_0^\infty |D\mu^h(x,t)|\, |Df^h(x,t)|\,\dd t\,\dd x\\
 \leq\,&\int_{\R^{n}} |\mathcal{M} \phi(x)| \brac{\int_0^\infty |D\mu^h(x,t)|^2\,\dd t}^{\frac{1}{2}}\, \brac{\int_0^\infty |Df^h(x,t)|^2\,\dd t}^{\frac{1}{2}}\,\dd x\\
 \aleq\,&\|\mathcal{M} \phi\|_{L^{\frac{n}{n-1}}(\R^n)} \brac{\int_{\R^n} \brac{\int_0^\infty |D\mu^h(x,t)|^2 \,\dd t}^{\frac{2n}{2}}\dd x}^{\frac{1}{2n}} \brac{\int_{\R^n} \brac{\int_0^\infty |Df^h(t,x)|^2\,\dd t}^{\frac{2n}{2}}\dd x}^{\frac{1}{2n}}
 \end{split}
\]
Here we have used for the Hardy-Littlewood maximal function $\mathcal{M}$
\[
 |\phi^h(x,t)| \aleq \mathcal{M}\phi(x).
\]
Also recall the characterization of  the homogeneous Triebel--Lizorkin spaces (listed e.g.\@ in \cite{LS20, Ing20}):
\[
\|f\|_{\dot{F}^{\alpha}_{p,q}}\aeq \brac{\int_{\R^n} \brac{\int_0^\infty |t^{1-\frac{1}{q}-\alpha} D f^h|^q dt}^{\frac{p}{q}} dx}^{\frac{1}{p}}.
\]
So, in light of the maximal theorem, we have shown that
\[
 \int_{\R^{n+1}_+} |D\mu^h|\, |Df^h| |\phi^h| \aleq 
 \|\phi\|_{L^{\frac{n}{n-1}}(\R^n)}\, \|\mu\|_{\dot{F}^{\frac{1}{2}}_{2n,2}(\R^n)} \, \|f\|_{\dot{F}^{\frac{1}{2}}_{2n,2}(\R^n)}.
\]
Thus, we can immediately conclude \eqref{eq:dual2:33463} from the embeddings \cite[Proposition 2.2.1 and Theorem 2.2.3(ii)]{RuSi} and scaling arguments:
\[
 \|\phi\|_{L^{\frac{n}{n-1}}(\R^n)} \leq [\phi]_{W^{1-s, \frac{n}{n-s}}(\R^n)} \leq 1,
\]
\[
 \|\mu\|_{\dot{F}^{\frac{1}{2}}_{2n,2}(\R^n)}  \aleq \|\mu\|_{\dot{F}^{s}_{\frac ns, \frac ns}(\R^n)} = [\mu]_{W^{s, \frac ns}(\R^n)},
\]
\[
 \|f\|_{\dot{F}^{\frac{1}{2}}_{2n,2}(\R^n)}  \aleq [f]_{W^{s, \frac ns}(\R^n)},
\] as long as $s> 1/2$.

Next we estimate 
\begin{equation}\label{wk-prod-estim}
\begin{split}
 \int_{\R^{n+1}_+} |\mu^h| |Df^h|  |D\phi^h|   &
 \leq  \brac{\int_{\R^n} \brac{\int_0^\infty |t^{1-\frac{n-s}{n}-(1-s)} D\phi ^h|^\frac{n}{n-s} dt} dx}^{\frac{n-s}{n}} \\ & \hspace{0.2in} \brac{\int_{\R^n} \brac{\int_0^\infty |\mu^h t^{1-\frac{s}{n}-s}  Df^h|^\frac ns dt} dx}^{\frac{s}{n}} \\
 & \leq   \|\mu^h \|_{L^\infty(\R^{n+1}_+)} \underbrace{[\phi]_{W^{1-s,\frac{n}{n-s}}(\R^n)}}_{\leq 1}   [f]_{W^{s,\frac ns}(\R^n)}.
 \end{split}
\end{equation} Now it is sufficient to observe that  by the maximum principle
\[
\|\mu^h \|_{L^\infty(\R^{n+1}_+)}  \leq  \|\mu \|_{L^\infty(\R^{n})} 
\] to conclude together with \eqref{eq:dual2:1} and \eqref{eq:dual2:33463}  with 
\[
 \|\mu \partial_\alpha f\|_{\dot{W}^{s-1,\frac ns}(\R^n)} \aleq  [\mu]_{W^{s, \frac{n}{s}}(\R^n)}\, [f]_{W^{s, \frac{n}{s}}(\R^n)} 
 + \|\mu\|_{L^\infty(\R^n)}  [f]_{W^{s, \frac{n}{s}}(\R^n)},
\]   which finishes the proof of (i).
    
    (ii) does not directly follow from (i).  We first analyse the asymptotic behavior of $\mu_k$.  Note that 
 since    $W^{s, \frac sn}(\R^n)$ is reflexive,  $\mu_k$ is weakly sequentially compact in $W^{s,\frac ns}$. We  shall see that $\mu_k \rightharpoonup 0$ weakly in $W^{s, \frac ns} (\R^n)$.  Indeed, take any weakly convergent subsequence, relabelled $\mu_k$,  $\mu_k \rightharpoonup \mu \in W^{s, \frac ns} (\R^n)$. Let $B_R$ be the open ball of radius $R>0$ centered at origin in $\R^n$.  For any $R>0$,   $\mu_k|_{B_R}$ is a bounded sequence in $W^{s, \frac ns} (B_R)$ and hence by \cite[Theorem 7.1]{DNPV12},   it is precompact in $L^{n/s}(B_R)$.   Since the limit of convergent subsequences cannot be anything other than $\mu|_{B_R}$, we conclude that for each $R>0$, $\mu_k \to \mu$ strongly in $L^{\frac ns}(B_R)$, and so for some subsequence, $\mu_{k_j}$ converges almost everywhere in $B_R$ to $\mu$. This implies that
 $$
\lim_{j\to \infty}  \frac{\mu_{k_j} (x)- \mu_{k_j}(y)}{|x-y|^{2s}}  =   \frac{\mu (x)- \mu (y)}{|x-y|^{2s}} 
 $$  for almost every $(x,y) \in B_R \times B_R$.  On the other hand, $[\mu_{k_j}]_{W^{s,\frac ns}(B_R)} \le [\mu_{k_j}]_{W^{s,\frac ns}(\R^n)} \to 0$ by the main assumption, which implies, again up to a subsequence of $\mu_{k_j}$, that the same limit vanishes for almost every $(x,y) \in B_R \times B_R$.  As a consequence $\mu|_{B_R}$ must be constant for all $R>0$, and since $\mu \in W^{s,\frac ns}(\R^n)$,  we obtain that $\mu \equiv 0$ is the unique weak  accumulation point of  the original sequence $\mu_k$. We finally conclude that for all $R>0$, $\|\mu_k\|_{L^\frac ns(B_R)} \to 0$. 
    
     In order to prove (ii), we note that it is sufficient to assume $f\in C^\infty_c(\R^n)$. Indeed, let $f_j \in C^\infty (\R^n)$ be such that  $[f_j - f]_{W^{s, \frac ns}(\R^n)} \to 0$. If 
  \begin{equation}\label{for-f-smooth}
 \lim_{k \to \infty}    \|\mu_k \partial_\alpha f_j\|_{W^{s-1, \frac sn}(\R^n)} = 0, 
    \end{equation} as proved below, then 
    $$
    \|\mu_k \partial_\alpha f\|_{W^{s-1, \frac sn}(\R^n)} \le   \|\mu_k \partial_\alpha (f  - f_j) \|_{W^{s-1, \frac sn}(\R^n)} +   \|\mu_k \partial_\alpha f_j\|_{W^{s-1, \frac sn}(\R^n)},     
    $$ converges to 0 too since because of (i) and the uniform boundedness of $\mu_k$ the first term on the right hand side is arbitrarily small for large $j$.  
    
  Now we will prove \eqref{for-f-smooth}. Let $f\in C^\infty_c(\R^n)$ and assume that $\supp f$ lies in the open ball $B_\rho$ in $\R^n$. Fix a smooth cut-off function $\eta \in C^\infty_c(B_{\rho+1})$ such that $\eta \equiv 1$ on $B_\rho$. We observe that for all $k$ and for all $\phi \in C^\infty_c(\R^n)$ 
   $$
   (\eta \mu_k) \p_\alpha f [\phi] = \int_{\R^n}   (\p_\alpha f)  \eta \mu_k   \phi   = \int_{B_\rho}  (\p_\alpha f)  \eta \mu_k  \phi= \int_{B_\rho} \p_\alpha f \mu \phi = \int_{\R^n} \p_\alpha f \mu \phi = \mu \p_\alpha f [\phi].
   $$  This implies $\mu \p_\alpha f =  (\eta \mu_k) \p_\alpha f $ and it is sufficient now to prove that 
  \begin{equation}\label{for-eta-cutoff}
    \lim_{k \to \infty}    \|(\eta \mu_k) \partial_\alpha f\|_{W^{s-1, \frac sn}(\R^n)} = 0. 
\end{equation}
 In order to do so, we have to analyse the sequence $\eta \mu^k$ and its harmonic extension $(\eta \mu_k)^h$ to $\R^{n+1}_{+}$.     We have
 $$
 \|\eta \mu_k \|_{L^{\frac ns}(\R^n)}  \le \|\eta\|_{L^\infty(\R^n)} \|\mu_k\|_{L^\frac ns(B_{\rho+1})} \xrightarrow{k \to \infty} 0, 
  $$ and
  $$
  \begin{aligned}
   [\eta \mu_k ]_{W^{s, \frac ns}(\R^n)}  &  \le \|\eta\|_{L^\infty(\R^n)}  [\mu_k]_{W^{s, \frac ns}(\R^n)} 
     +  2 \ds \Big (\int_{B_{\rho +1}} |\mu_k(y)|^\frac ns \int_{\R^n} \frac{|\eta(x) - \eta(y)|^\frac ns}{|x-y|^{2n}} \,\, dx\, dy \Big ) ^{\frac sn} 
     \\ & \aleq   [\mu_k]_{W^{s, \frac ns}(\R^n)}  + \|\mu_k\|_{L^ \frac ns (B_{\rho+1})} \xrightarrow{k \to \infty} 0.
  \end{aligned}
  $$
   
 Now, following the first inequality in \eqref{wk-prod-estim}, applied  to $f$ and to the sequence $\eta\mu_k$,  together with \eqref{eq:dual2:1} and \eqref{eq:dual2:33463}, we can obtain : 
\[
\|\eta \mu_k \partial_\alpha f\|_{\dot{W}^{s-1, \frac{n}{s}}(\R^n)} \aleq  [\eta  \mu_k]_{W^{s,\frac{n}{s}}(\R^n)}\, [f]_{W^{s,\frac{n}{s}}(\R^n)} + 
\|(\eta \mu_k)^h t^{1-\frac sn -s} Df^h\|_{L^{\frac ns}(\R^{n+1}_+)}.
\]
 Since $[\eta \mu_k]_{W^{s, \frac ns}(\R^n)} \xrightarrow{k \to \infty} 0$, we conclude the proof of the theorem once we can show
\begin{equation}\label{lim=0}
\lim_{k \to \infty} \|(\eta \mu_k)^h t^{1-\frac sn -s} Df^h\|_{L^{\frac ns}(\R^{n+1}_+)}  = 0.
\end{equation} For this, we observe that 
\[
\|t^{1-\frac sn -s} Df^h\|_{L^{\frac ns}(\R^{n+1}_+)} \aleq [f]_{W^{s ,\frac ns}(\R^n)} < \infty
\]
and by the maximum principle  
\[
 \sup_{k} \|(\eta \mu_k)^h \|_{L^\infty(\R^{n+1}_+)}  \leq \sup_{k} \|\eta \mu_k \|_{L^\infty(\R^{n})}   < \infty.
\]
Let 
\[
 G_k := (\eta\mu_k)^h t^{1-\frac sn -s} |Df^h|.
\]
Then we have 
\[
 \sup_{k} |G_k(x,t)| \aleq t^{1-\frac sn -s} |Df^h(x,t)|  \quad \forall x,t \in \R^{n+1}_+.
\]
On the other hand, we have   from the convergence $\eta  \mu_k \to 0$ in $L^{\frac{n}{s}}(\R^n)$ that
every subsequence of $\eta \mu_k$    has  a subsequence $\eta \mu_{k_j} \xrightarrow{j \to \infty} 0$ almost everywhere in $\R^n$.  Since $\eta \mu_k$ are  compactly supported in $B_{\rho+1}$, they belong to $L^1(\R^n) \cap L^\infty (\R^n)$ and hence the Poisson integral formula  \eqref{poisson}  is valid. Now, the uniform boundedness of  $\eta \mu_k$  in $L^\infty(\R^n)$ and dominated convergence applied to \eqref{poisson} imply that $(\eta \mu_{k_j})^h$, and hence $G_{k_j}$, converge to 0 almost everywhere in $\R^{n+1}_+$. By dominated convergence we then find
\[
 \lim_{j \to \infty} \|G_{k_j}\|_{L^{\frac ns}(\R^{n+1}_+)} =0.
\]
A standard argument now implies \eqref{lim=0} and we can conclude the proof as \eqref{for-eta-cutoff} is shown. 
\end{proof}
The following corollary is a local version of \Cref{weak-product}: 

  \begin{corollary}\label{weak-product-local}
 Let $n \geq 2$ and $1/2<s<1$. Assume that     $\Omega \subset \R^n$ is a bounded smooth domain and $f \in W^{s, \frac ns}(\Omega)$.

{\rm (i)} Let $\mu \in W^{s, \frac{n}{s}}(\Omega) \cap L^\infty(\Omega)$. Then for any $\alpha \in \{1,\ldots,n\}$, the product $\mu \partial_\alpha f$ is well-defined as a distribution on $\Omega$ and 
$$ 
 \|\mu \partial_\alpha f\|_{\dot{W}^{s-1, \frac ns}(\Omega)}   \aleq [f]_{W^{s, \frac{n}{s}}(\Omega)} ([\mu] _{W^{s, \frac{n}{s}}(\Omega)}+ \|\mu\|_{L^\infty(\Omega)}).
$$  
Moreover, for any $\mu \in C^\infty(\overline \Omega)$ and $\phi \in W_{00}^{1-s, \frac n{n-s}}(\Omega)$ we have
\begin{equation}\label{asso-prod}
\mu \partial_\alpha f [\phi]  = \partial_\alpha f [\mu \phi].  
\end{equation}
 
{\rm (ii)} Let $\mu_k \in W^{s, \frac{n}{s}}(\Omega) \cap L^\infty(\Omega)$  be such that  
\begin{equation}\label{Lp-vanish}
 \sup_k\, \|\mu_k\|_{L^\infty(\Omega)}  < \infty \quad \mbox{and} \quad \lim_{k\to \infty} \|\mu_k\|_{W^{s, \frac{n}{s}}(\Omega)} =0. 
\end{equation} Then for any $\alpha \in \{1,\ldots,n\}$,
\[
\|\mu_k \partial_\alpha f\|_{\dot{W}^{s-1, \frac{n}{s}}(\Omega)} \xrightarrow{k \to \infty} 0.
\]
\end{corollary}
 
\begin{remark}
Note that a mere boundedness of $\|\mu_k\|_{L^\frac ns (\Omega)}$ is no more sufficient for the local version of \Cref{weak-product}-(ii) to be true. $\mu_k\equiv 1$ is a trivial counter-example. 
\end{remark}

 \begin{proof}
Given $f \in W^{s,\frac ns}(\Omega)$, $\mu \in W^{s,\frac ns}(\Omega) \cap L^\infty (\Omega)$,  we extend them to $\tilde f, \tilde \mu$ using a bounded linear operator to the whole $\R^n$ and we consider the mollified sequence $\tilde f_\e$ and $\tilde \mu_\e$. By  \Cref{weak-product}  we have for any $\phi \in C^\infty_c(\Omega)$, extended by $0$  outside $\Omega$ to $\tilde \phi$ over $\R^n$, 
$$
\int_{\Omega} \tilde \mu_\e \partial_\alpha \tilde f_\e \phi = \int_{\R^n} \tilde \mu_\e \partial_\alpha \tilde f_\e \tilde \phi \longrightarrow \tilde \mu \partial_\alpha \tilde f [\tilde \phi] \,\, \mbox{as} \,\, \e\to 0.  
$$
We define for $\phi \in C^\infty_c(\Omega)$
\begin{equation}\label{loc-prod}
 \mu \partial_\alpha  f [\phi] :=  \tilde \mu \partial_\alpha \tilde f [\tilde \phi], 
\end{equation} which, in view of the fact that 
$$
[\tilde \phi]_{W^{s,p}(\R^n)} \aleq [\phi]_{W^{s,p}_{00}(\Omega),}
$$ 
satisfies the desired estimate in (i).  Approximating $f$ and $\phi$ in their respective spaces by  smooth sequences $\tilde f_k \in C^\infty(\overline \Omega)$ and $\tilde \phi_k \in C^\infty_c(\Omega)$ and passing to the limit using the newly established  estimates on $\Omega$ yields \eqref{asso-prod}. 

As for (ii), the  \Cref{weak-product}-(ii) is applicable to the extensions $\tilde \mu_k$ because of the  assumptions in \eqref{Lp-vanish} since in that case $\|\tilde \mu_k\|_{L^\infty(\R^n)}$ are uniformly bounded and we have
 $$
\|\tilde \mu_k\|_{W^{s, \frac ns}(\R^n)} \aleq \|\mu_k\|_{W^{s, \frac ns}(\Omega)} \xrightarrow{k\to 
\infty} 0.
$$ This impies  (ii)  as formulated.

Note that a diagonal argument and part (ii) also prove the  independence of the definition from the choice of extensions.   
 \end{proof}

  \begin{corollary}\label{weak-product-neg}
Let $n \geq 2$ and $\Omega \subset \R^n$ be a bounded smooth domain or $\Omega = \R^n$.  Assume that $1/2<s<1$,  $g \in \dot{W}^{s-1, \frac ns}(\Omega), \mu \in {W}^{s, \frac{n}{s}}(\Omega) \cap L^\infty(\Omega)$,  Then the product $\mu  g$ is well-defined as a distribution on $\Omega$ and 
\[
\|\mu  g\|_{\dot{W}^{s-1, \frac{n}{s}}(\Omega)}   \aleq   \|g\|_{\dot{W}^{s-1, \frac ns}(\Omega)} ( [\mu] _{W^{s,\frac ns}(\Omega)} +  \|\mu\| _{L^\infty(\Omega)}).
\] Moreover, if $\mu_k \in W^{s, \frac{n}{s}}(\Omega) \cap L^\infty(\Omega)$ with 
\[
 \sup_k \|\mu_k\|_{L^\infty(\Omega)}   < \infty \quad \mbox{and} \quad  
 \left \{ \begin{array}{ll}\ds \lim_{k\to \infty}
 \|\mu_k\|_{W^{s,\frac{n}{s}}(\Omega)} = 0 &  \mbox{if} \,\, \Omega \neq \R^n 
 \\    \ds \sup_k \|\mu_k\|_{L^\frac ns(\R^n)}   < \infty    \,\,  \mbox{and} \,\, \lim_{k\to \infty}  [\mu_k]_{W^{s,\frac{n}{s}}(\R^n)} =0 & \mbox{otherwise}, 
 \end{array} \right.
\]  then  
\[
\|\mu_k g\|_{\dot{W}^{s-1, \frac{n}{s}}(\Omega)} \xrightarrow{k \to \infty} 0.
\]
\end{corollary}
\begin{remark}
When $\Omega = \R^n$, $1/2<s<1$ and $g$ belongs to the larger space $W^{s-1, \frac ns}(\R^n)$
the product $\mu g$ can be defined  as an element of $W^{s-1, \frac ns}(\R^n)$  and its continuity shown based on  \cite[Theorem 4.6.2/2]{RuSi},  where the Triebel-Lizorkin theory of spaces and the notion of paraproducts are used.  Another proof  can be given through duality based on \cite[Lemma 6]{BM01}. Indeed, for $1/2<s<1$, let $1 <t= n/s < \infty$, $0<\theta= (1-s)/s <1$, $1<p=n/(n-s) <\infty$, and $1<r= n/(n-1)<\infty$, and   note that 
$$
\frac 1r + \frac \theta t = \frac 1p. 
$$ Hence, \cite[Lemma 6]{BM01} implies that for all $\phi \in W^{1-s, \frac {n}{n-s}}(\R^n)$:
$$
\begin{aligned}
\|\mu \phi\|_{W^{1-s, \frac {n}{n-s}}(\R^n)} & \aleq  \|\mu\|_{L^\infty(\R^n)} \|\phi\|_{W^{1-s, \frac {n}{n-s}}(\R^n)} 
+ \|\mu\|^\theta_{W^{s, \frac {n}{s}}(\R^n)} \|\mu\|^{1-\theta}_{L^\infty(\R^n)} \|\phi\|_{L^{\frac {n}{n-1}}(\R^n)} 
\\ &  \aleq  \|\mu\|^\frac{2s-1}{s}_{L^\infty(\R^n)} \Big (  \|\mu\|^\frac{1-s}{s}_{L^\infty(\R^n)} 
+ \|\mu\|^\frac{1-s}{s}_{W^{s, \frac {n}{s}}(\R^n)}\Big ) \|\phi\|_{W^{1-s, \frac {n}{n-s}}(\R^n)}. 
\end{aligned} 
$$ Now it is sufficient to define for $g\in  W^{s-1, \frac ns}(\R^n)$:
$$ 
\mu g [\phi] := g[\mu \phi], 
$$ and we obtain the estimate
$$
\|\mu g\|_{W^{s-1,\frac ns}(\R^n)} \aleq   \|\mu\|^\frac{2s-1}{s}_{L^\infty(\R^n)} \Big (  \|\mu\|^\frac{1-s}{s}_{L^\infty(\R^n)} 
+ \|\mu\|^\frac{1-s}{s}_{W^{s, \frac {n}{s}}(\R^n)}\Big )    \|g\|_{W^{s-1, \frac ns}(\R^n)}
$$
by duality. 
\end{remark} 
\begin{proof}
 If $\Omega= \R^n$, in view of  \eqref{grad-lap}, it suffices to apply \Cref{weak-product}  to components of $f:= D\lap^{-1} g$, if necessary by approximating $g$ in $\dot{W}^{s-1, \frac ns}(\R^n)$ by a sequence of $C^\infty_c(\R^n)$ functions. If $\Omega$ is a bounded smooth domain,  we fix an extension operator $g\to \tilde g$  from $W^{s-1,\frac ns}(\Omega)$ into $W^{s-1, p}(\R^n)$, and an $\eta \in C^\infty_c(\R^n)$ such that $\eta \equiv 1$ on $\overline \Omega$.   
We have
$$
\| \eta \tilde g\|_{\dot{W}^{s-1, \frac ns}(\R^n)} \aleq \|  \tilde g\|_{{W}^{s-1, \frac ns}(\R^n)}  \aleq  \|g\|_{{W}^{s-1, \frac ns}(\Omega)} \le \|g\|_{\dot{W}^{s-1, \frac ns}(\Omega)}. 
 $$ Hence $\eta \tilde g \in  \dot{W}^{s-1, \frac ns}(\R^n)$ is a bounded extension of $g$ to the whole $\R^n$ and for any extension 
 $\tilde \mu \in W^{s, \frac ns} \cap L^\infty(\R^n)$ of $\mu$, the product $\tilde \mu (\eta \tilde g)$ is well-defined.  We let $\mu g[\phi]:= (\tilde \mu (\eta \tilde g)) [\phi]$ for all $\phi \in C^\infty_c(\Omega)$. We can now argue as in the proof of \Cref{weak-product-local} in order to establish the properties of the distributional product $\mu g$ and its independence from the choice of the extension operators or $\eta$. \end{proof}
   
\section{A Proof of Theorem~\ref{th-main1}}\label{f-develop}

Our  reasoning for proving  \Cref{prop: thin}  is a combination of  the arguments used in the proofs of \cite[Proposition 1.1]{P04} and \cite[Theorem 1.3]{LP17}.   First, analogous to \cite[Proposition~7.1] {LP17}, we show that given the proper fractional Sobolev regularity, the degree formula is valid for $f$:
\begin{lemma}\label{lem: degree}
Assume $\Omega\subset \R^2$ is an open smooth bounded set, or $\Omega = \R^2$, $s\ge 2/3$ and $f\in W^{2, 2/s} \cap C^0 (\Omega, \R^2)$. For any $\TO \Subset \Omega$ and any $g \in C^\infty_c( \R^2 \setminus f(\p\TO))$, one has
\begin{equation*}
\int_{\R^2} g(y) \deg(f, \TO; y)\,\dd y =  \int_\TO {\rm Jac}(f) [g\circ f].
\end{equation*}
In particular, if ${\rm Jac(f)}>0$, then  $\deg(f, \TO; y)$ is nonnegative whenever it is well-defined and moreover: 
\begin{equation}\label{deg>0}
\forall y \in f (\TO) \setminus f(\partial \TO)\quad \deg(f, \TO; y) \ge 1,
\end{equation} since the degree must be positive for such $y$. 
 
\end{lemma}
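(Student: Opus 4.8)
The plan is to prove the degree formula by approximation, reducing to the classical case for smooth maps and then passing to the limit using the fractional Sobolev regularity of $f$ and the continuity of the distributional Jacobian operator on the relevant spaces. First I would recall that for $f \in W^{2,2/s} \cap C^0(\Omega,\R^2)$ with $s \geq 2/3$, the distributional Jacobian $\Jac(f) = \Det \nabla f$ is well-defined: since $\nabla f \in W^{1,2/s}(\Omega,\R^{2\times 2})$ and $2/s \leq 3$, one can write $\Jac(f) = \partial_1(f^1 \partial_2 f^2) - \partial_2(f^1 \partial_1 f^2)$ distributionally, and the products $f^1 \partial_j f^2$ make sense because $f^1 \in C^0 \cap W^{2,2/s}$ and $\partial_j f^2 \in W^{1,2/s}$; this places $\Jac(f)$ in a negative-order fractional Sobolev space against which $g \circ f$ (which lies in $W^{1,2/s} \cap C^0$ when $g$ is, say, smooth, or can be handled by the pairing with $L^\infty$ functions composed with $f$) pairs continuously. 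So I would first establish the formula for $g \in C_c^\infty(\R^2 \setminus f(\partial\TO))$ and then remove the regularity on $g$ by a density/monotone-class argument, exploiting that $\deg(f,\TO;\cdot)$ is constant on connected components of $\R^2 \setminus f(\partial\TO)$ and that the set of such $g$ for which the identity holds is closed under bounded pointwise limits.

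The core step is the smooth case. Take a standard mollification $f_\e = f \ast \varphi_\e$ (using the extension operator as in \Cref{mollif}). For $\e$ small relative to $\dist(\TO,\partial\Omega)$, $f_\e \to f$ uniformly on $\overline{\TO}$ and $\nabla f_\e \to \nabla f$ in $W^{1,2/s}_{loc}$, hence $f_\e(\partial\TO)$ stays uniformly away from $\supp g$ for $\e$ small, so $\deg(f_\e,\TO;y) = \deg(f,\TO;y)$ on $\supp g$ for $\e$ small by homotopy invariance of the degree. For smooth $f_\e$ the classical change-of-variables / degree formula gives
\[
\int_{\R^2} g(y)\deg(f_\e,\TO;y)\,\dd y = \int_\TO g(f_\e(x))\,\Jac(f_\e)(x)\,\dd x = \int_\TO \Jac(f_\e)[g\circ f_\e].
\]
Then I would pass $\e \to 0$: the left side converges to $\int g \deg(f,\TO;\cdot)$ by the degree stabilization just noted; the right side converges to $\int_\TO \Jac(f)[g\circ f]$ because $\Jac(f_\e) \to \Jac(f)$ in the appropriate negative fractional Sobolev norm (using that $\nabla f_\e \to \nabla f$ in $W^{1,2/s}$ and the product estimates — here \Cref{weak-product} type bounds, or more elementarily the commutator estimates of \Cref{moli-est} and \Cref{commute-convergence}, control the products $f_\e^1 \partial_j f_\e^2 \to f^1 \partial_j f^2$), while $g \circ f_\e \to g \circ f$ strongly in the dual space ($g$ smooth, $f_\e \to f$ uniformly and in $W^{1,2/s}$, so $g\circ f_\e \to g \circ f$ in $W^{1,2/s} \cap C^0$).

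Finally the ``in particular'' assertion: assume $\Jac(f) > 0$ as a distribution (i.e. $\Jac(f)[\psi] \geq 0$ for $\psi \geq 0$, and in fact strictly so in the sense stated). For $y \notin f(\partial\TO)$, if $y \notin f(\overline{\TO})$ then $\deg(f,\TO;y) = 0$. If $y \in f(\TO)\setminus f(\partial\TO)$, pick a small ball $B_\rho(y)$ with $\overline{B_\rho(y)} \cap f(\partial\TO) = \emptyset$; applying the formula with $g = \mathbf 1_{B_\rho(y)}$ (justified by the density step) gives $\deg(f,\TO;y)\,|B_\rho(y)| = \int_\TO \Jac(f)[\mathbf 1_{B_\rho(y)}\circ f] \geq 0$ since $\mathbf 1_{B_\rho(y)}\circ f \geq 0$ and $\Jac(f) \geq 0$; hence $\deg(f,\TO;y) \geq 0$ for every $y$ where it is defined. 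To get $\deg(f,\TO;y) \geq 1$ for $y \in f(\TO)\setminus f(\partial\TO)$: such a $y$ is attained, $y = f(x_0)$ with $x_0 \in \TO$; by continuity and the positivity of $\Jac(f)$ near $x_0$ (the preimage $f^{-1}(B_\rho(y))$ has a component $U \ni x_0$ compactly contained in $\TO$ for $\rho$ small), one gets $\deg(f,\TO;y)\,|B_\rho(y)| = \int_U \Jac(f)[\mathbf 1_{B_\rho(y)}\circ f] > 0$, and since the degree is an integer we conclude $\deg(f,\TO;y) \geq 1$.

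I expect the main obstacle to be making the convergence $\Jac(f_\e)[g \circ f_\e] \to \Jac(f)[g\circ f]$ fully rigorous at the threshold $s = 2/3$, where $2/s = 3$ is exactly the borderline for the embedding and for the Jacobian pairing: one must carefully track in which negative fractional Sobolev space $\Jac(f_\e)$ converges and check that $g\circ f_\e$ is bounded and convergent in the predual, which is where the commutator estimates of \Cref{moli-est}, \Cref{commute-convergence} and the product results of \Cref{weak-product}/\Cref{weak-product-neg} are really needed rather than a naive Hölder argument. The secondary subtlety is the density step removing smoothness of $g$ — one needs that $g \mapsto \Jac(f)[g\circ f]$ extends continuously from $C_c^\infty$ to bounded Borel $g$ supported off $f(\partial\TO)$, which follows from a bounded-convergence argument once one knows $g \circ f \in W^{1,2/s}\cap C^0$ depends continuously (in the relevant topology) on $g$, but should be stated with care.
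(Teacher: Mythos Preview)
Your approach is the paper's: mollify, stabilize the degree on $\supp g$ via uniform convergence, apply the classical change-of-variables formula for $f_\e$, and pass $\e\to 0$ in $\Jac(f_\e)[g\circ f_\e]$; the paper simply cites \cite[Lemma~1.3]{LS19} for that last convergence, reduces to smooth $g$ by a one-line ``standard approximation'' remark, and leaves the ``in particular'' clause unargued. (Incidentally, the hypothesis $W^{2,2/s}$ is almost certainly a typo for $W^{s,2/s}$---this is the setting of \cite{LS19} and of the application in \Cref{prop: thin}---so your worry about the $s=2/3$ threshold is well-placed but is exactly what that citation handles; under the stated $W^{2,2/s}$ regularity $\det\nabla f$ would already be an $L^{1/s}$ function and no delicacy would arise.)
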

By definition ${\rm Jac}(f)>0$ if  for all non-negative $\phi\in C^\infty_c(\Omega)$, ${\rm Jac} (f)[\phi]>0$, unless $\phi \equiv 0$. 

\begin{proof}
	Consider the mollified functions $f_\e := f \ast \varphi_\e \in C^\infty(\Omega,\R^2)$,  as defined in \Cref{mollif}. Since $f_\e$ converges locally uniformly to $f$,  similar as in \cite[Proposition~7.1]{LP17}  we have
	\begin{eqnarray*}
	&&\deg(f, \TO; y)=\deg(f_\e, \TO; y) \qquad \text{ for all } y \in {\rm supp}\,g;\\
	&&
	\int_\TO \big(g \circ   f_\e(z)\big)\det\na f_\e (z)\,\dd z = \int_{\R^2} g(y)\deg(f_\e, \TO; y)\,\dd y,
	\end{eqnarray*}
for small enough $\e$. So it suffices to show that
\begin{align}\label{degree, formula 1}
\int_\TO \big(g \circ f_\e(z)\big)\det\na f_\e (z)\,\dd z \longrightarrow {\rm Jac}(f) [g\circ f] \qquad\text{as $\e \to 0$}.
\end{align}
But the left-hand side of Eq.~\eqref{degree, formula 1} equals ${\rm Jac}(f_\e) [g\circ f_\e]$, which converges to ${\rm Jac}(f) [g\circ f]$ by \Cref{la:distjac}.  This proves Eq.~\eqref{degree, formula 1}, and hence the assertion follows.  \end{proof}

Next we show that if further ${\rm Jac}(f)\equiv 0$ and ${\rm curl} \, f =0$, then  the image $f(\Omega)$ is  of zero measure.  In view of \cite[Corollary 1.1.2]{kor2} and \cite[Proposition 2.1]{DLP20}, it follows that $f$ is either locally constant  around a point or constant in segments joining the boundary of $\Omega$ on both sides. The local  H\"older regularity $C^{0,s/2}$ is a straightforward  consequence of the  Fubini theorem  for fractional Sobolev spaces \cite[2.3.4/2]{RuSi} and the Sobolev embedding Theorem  in one dimensions \cite[Theorem 8.2]{DNPV12} after the application of the local bilipschitz change of variable introduced in \cite[Lemma 2.11]{DLP20}. The little H\"older regularity follows in view of density of smooth mappings in $W^{s, \frac 2s}(\R)$ for $s>0$.  This will conclude the proof of \Cref{th-main1}.

\begin{proposition}\label{prop: thin}
Let $\Omega$, $s$, and $f$ be as in the assumptions of \Cref{th-main1}. Then $f(\Omega)$ has zero Lebesgue measure. In particular it has empty interior.  
\end{proposition}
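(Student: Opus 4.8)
The plan is to argue locally: by covering $\Omega$ with axis-parallel open squares and using the countable subadditivity of $\mathcal{L}^2$, it suffices to show $\mathcal{L}^2(f(Q))=0$ for each small square $Q\Subset\Omega$. Two ingredients drive the proof. First, the degree formula of Lemma~\ref{lem: degree}: since $\Jac(f)\equiv 0$ on $\Omega$, and hence on every subdomain, it gives $\int_{\R^2}g(y)\,\deg(f,\widetilde\Omega;y)\,dy=0$ for every admissible $g$ and every open $\widetilde\Omega\Subset\Omega$, and since $\deg(f,\widetilde\Omega;\cdot)$ is integer-valued and locally constant on $\R^2\setminus f(\partial\widetilde\Omega)$, this forces
\[
\deg(f,\widetilde\Omega;y)=0\qquad\text{for every open }\widetilde\Omega\Subset\Omega\text{ and every }y\notin f(\partial\widetilde\Omega).
\]
Second, the Fubini property of $W^{s,\frac2s}$ (\cite[2.3.4/2]{RuSi}) together with the one-dimensional fact that the image of a $W^{s,\frac2s}(\R;\R^2)$ curve is $\mathcal{L}^2$-null for $s>\tfrac12$ (\cite{HH15}, cf.\ Section~\ref{s-ac}): after fixing a suitable dyadic grid offset, this produces a countable family $\mathcal{G}$ of axis-parallel segments, dense in $Q$ at every dyadic scale and containing the boundary of each dyadic subsquare of $Q$, with $f|_\gamma\in W^{s,\frac2s}(\gamma;\R^2)$ and hence $\mathcal{L}^2(f(\gamma))=0$ for every $\gamma\in\mathcal{G}$; in particular $\mathcal{L}^2(f(\partial Q'))=0$ for every dyadic subsquare $Q'$ of $Q$.

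With these in hand, I would first observe that $f$ is locally injective at no point: were $f$ injective on some closed ball $\overline B\subset\Omega$, Brouwer's invariance of domain would make $f|_{\overline B}$ a homeomorphism onto $f(\overline B)$, so $\deg(f,B;\cdot)=\pm1$ on the nonempty open set $f(B)\subset\R^2\setminus f(\partial B)$, contradicting the display. Combining this with the vanishing of $\deg(f,\cdot\,;y)$ on all dyadic subsquares, and following the arguments of \cite[Proposition~1.1]{P04} and \cite[proof of Theorem~1.3]{LP17}, I would establish the key structural statement: at every $x\in Q$ where $f$ is not locally constant, the connected component of $f^{-1}(f(x))$ through $x$ is a non-degenerate continuum. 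Granting this, write $f(Q)=f(N)\cup f(Q\setminus N)$, where $N\subset Q$ is the open set on which $f$ is locally constant; then $f(N)$ is countable (one value per connected component of $N$), and for $x\in Q\setminus N$ the fibre-component through $x$ has positive diameter, so by a boundary-bumping argument it contains a non-degenerate sub-continuum inside $Q$, which must cross some $\gamma\in\mathcal{G}$, whence $f(x)\in f(\gamma)$. Thus
\[
f(Q)\ \subseteq\ f(N)\ \cup\ \bigcup_{\gamma\in\mathcal{G}}f(\gamma),
\]
a countable union of $\mathcal{L}^2$-null sets, so $\mathcal{L}^2(f(Q))=0$; summing over the cover gives $\mathcal{L}^2(f(\Omega))=0$, and the empty interior is immediate.

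I expect the structural statement at this low regularity to be the main obstacle. In the Sobolev case $s=1$ one simply invokes the area formula for $\nabla u\in W^{1,2}$ together with $\det\nabla^2 u=0$ a.e., and in the H\"older range $f$ is at least $C^1$-approximated by its mollifications; here $f$ is merely $c^{0,s/2}$ and $\nabla f$ is only a distribution of negative differentiability, so one must instead pass to the mollifications $f_\e$, carry over the vanishing of the degree and of $\mathcal{L}^2(f(\partial Q'))$, and exploit $\Jac(f)\equiv 0$ only in the weak form afforded by the commutator estimates of Lemma~\ref{moli-est} and Corollary~\ref{commute-convergence} — and, precisely in the borderline case $s=\tfrac23$, where the pairing $\Jac(f)[g\circ f]$ in Lemma~\ref{lem: degree} sits exactly at the edge of being meaningful, through the scalar-multiplication identity for the distributional Jacobian, Theorem~\ref{th:lambdajac}. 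The remaining steps — the localization, the Fubini slicing, and the invariance-of-domain argument — are routine.
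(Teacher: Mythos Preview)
Your approach diverges substantially from the paper's, and the gap you yourself flag is fatal as written. The ``structural statement'' --- that at every non-constancy point the fibre component through $x$ is a non-degenerate continuum --- is not something you can harvest from \cite[Proposition~1.1]{P04} or \cite[Theorem~1.3]{LP17}: in those references (and in the present paper) the logic runs the other way, with the image being shown to have measure zero \emph{first} and the structure of the level sets deduced afterwards via \cite[Corollary~1.1.2]{kor2} and \cite[Proposition~2.1]{DLP20}. Knowing merely that $\deg(f,\widetilde\Omega;\cdot)\equiv 0$ and that $f$ is nowhere locally injective does not force fibres to be continua; non-injectivity in every neighbourhood of $x$ says nothing about $f^{-1}(f(x))$ itself. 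Your proposed remedy --- mollifications, commutator estimates, and \Cref{th:lambdajac} --- is misplaced: those tools are used later in the paper to control the second fundamental form of the isometric immersion, not to analyse the level sets of a planar map with vanishing Jacobian. A further warning sign is that your argument never uses $\curl f=0$.

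The paper's route is far shorter and avoids the fibre analysis entirely. Following Kirchheim, one perturbs to $f^{(\delta)}(x,y):=f(x,y)+\delta(-y,x)^\top$; because $\curl f=0$ and $\Jac(f)=0$, one computes $\Jac(f^{(\delta)})\equiv\delta^2>0$. Now the degree formula of \Cref{lem: degree} has a \emph{positive} right-hand side, so $\deg(f^{(\delta)},B_x;\cdot)\geq 1$ on $f^{(\delta)}(B_x)\setminus f^{(\delta)}(\partial B_x)$, and integrating gives $\mathcal L^2\!\big(f^{(\delta)}(B_x)\big)\leq\delta^2\,\mathcal L^2(B_x)$ once one picks, via Fubini, balls $B_x$ with $f^{(\delta)}\in W^{s,2/s}(\partial B_x)$ so that $\mathcal L^2\big(f^{(\delta)}(\partial B_x)\big)=0$. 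Sending $\delta\to 0$ and using uniform convergence $f^{(\delta)}\to f$ yields $\mathcal L^2(f(B_x))=0$. The curl-free hypothesis is precisely what makes this perturbation work, and it replaces your unproven structural step with a two-line computation.
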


\begin{proof}
Without loss of generality and by considering compactly contained subsets of $\Omega$ we can assume that $\Omega$ is bounded and smooth. Following Kirchheim \cite{K01} and as in the arguments Pakzad \cite[Lemma 2.1]{P04} and Li--Schikorra \cite[Theorem~1.6]{LS19}, consider the auxiliary maps
\begin{equation}\label{f delta}
f^{(\delta)}(x,y):=f(x,y)+\delta (-y,x)^\top.
\end{equation}  Let $\TO \Subset \Omega$ be an open set.   
Since $f^{(\delta)} \to f$ uniformly as $\delta \searrow 0$, there exists  a number $\delta_\kappa$ small enough such that 
$$\|f-f^{(\delta_\kappa)}\|_{C^0(\TO)} \leq \kappa.
$$ One may choose $\delta_\kappa$ to be  decreasing in $\kappa$.    As a consequence, $f(\TO)$ lies in the $\kappa$-neighbourhood of $f^{(\delta_\kappa)}(\TO)$. Thus
\begin{align*}
\leb^2\Big( f(\TO) \Delta f^{(\delta_\kappa)}(\TO) \Big) \leq C\kappa^2
\end{align*}
for some constant $C$ depending only on $\TO$. Therefore, by sending $\kappa \map 0$, we may infer that 
\begin{equation}\label{delta convergence}
\lim_{\delta \searrow 0} \leb^2\big(f^{(\delta)}(\TO)\big) = \leb^2\big( f(\TO) \big).
\end{equation}
On the other hand, once again by setting $f_\e := f \ast \varphi_\e$ and $f_{\e}^{(\delta)}(x,y):= f_\e(x,y)+  \delta (-y,x)^\top $, we note that $f^{(\delta)}$ is the $W^{s,2/s}$-limit of $f_{\e}^{(\delta)}$ and hence  for all $\phi \in C^\infty_c(\Omega)$:
 $$
{\rm Jac} (f^{(\delta)}) [\phi] = \lim_{\e\to 0} \int_\Omega \det(\nabla f^{(\delta)}_\e) \phi   = \lim_{\e\to 0} \int_\Omega \det(\nabla f_\e )\phi + \int_\Omega  \delta^2 \phi =  \int_\Omega  \delta^2 \phi,
$$  where we used the facts that  ${\rm curl}\, f_\e=0$ and ${\rm Jac} (f)=0$.  We deduce that ${\rm Jac}(f^{(\delta)}) \equiv \delta^2>0$. 
Note that by \cite[Theorem 1.1]{LS19}  $f^{(\delta)}$ is continuous.
 
We take a  nondecreasing sequence  of   nonnegative $g_k \in C^\infty_c (\R^2\setminus f^{(\delta)}(\partial \TO))$ converging pointwise to  
$\chi_{\R^2 \setminus f^{(\delta)}(\partial \TO)}$. Applying  \Cref{lem: degree} and  the monotone convergence  theorem we have
\begin{equation}\label{deg: L1estiamte}
\begin{aligned}
\int_{\R^2\setminus f^{(\delta)}(\partial \TO)} \deg(f^{(\delta)}, \TO; y) \, dy & = \lim_{k\to \infty} \int_{\R^2} g_k (y) \deg(f^{(\delta)}, \TO; y) \,dy 
= \lim_{k\to \infty}\int_\TO (g_k \circ f^{(\delta)}) \delta ^2 \\ & = \delta^2 \leb^2\big( \TO\setminus  (f^{(\delta)})^{-1}(f^{(\delta)}(\partial \TO))\big) \le \delta^2 \leb^2\big( \TO\big).
\end{aligned}
\end{equation}  For any $x\in \Omega$, we let $B_x \Subset \Omega$ be a disk centered at $x$ in a manner that $f^{(\delta)}\in W^{s,2/s}(\partial B_x)$. This is possible by the Fubini theorem for fractional Sobolev spaces, which is a well-known fact, see \cite{St68} in view of  \cite[Lemma 7.68]{adams}.  A similar proof  recently appeared in \cite[Lemma 2.2]{LS19}; for other proofs see \cite[Theorem 2.3.4/2]{RuSi} or \cite[Lemma 2.6]{SS20}.   Now, \Cref{th-hausdorffcontentzero}  yields $\leb^2\big(f^{(\delta)} (\partial B_x)\big )=0$.  Therefore, applying \eqref{deg>0} and \eqref{deg: L1estiamte} to $\TO = B_x$ we have: 
$$
\begin{aligned}
\leb^2\big(f^{(\delta)}(B_x)\big) & = \int_{\R^2}   \chi_{f^{(\delta)}(B_x)} = \int_{\R^2\setminus f^{(\delta)}(\partial B_x)}   \chi_{f^{(\delta)}(B_x)} \\ & \le \int_{\R^2\setminus f^{(\delta)}(\partial B_x)} \deg(f^{(\delta)}, B_x; y)  dy \le \delta^2 \leb^2\big(B_x \big). 
\end{aligned}
$$  It follows by \eqref{delta convergence} that for all $x\in \Omega$, $\leb^2\big(f(B_x) \big)=0$. The conclusion follows. \end{proof}

\section{Mollifying \texorpdfstring{$W^{1+s, 2/s}$}{W(1+s,2/s)} isometric immersions}\label{sec-mol}

Given an isometric immersion  $u\in I^{1+s, \frac2s} (\Omega, \R^3)$ on a bounded smooth domain $\Omega\subset \R^2$ , with $s>1/2$,  We will study the geometry of a sequence of mollified mappings $u_\e := u\ast \varphi_\e$. One difficulty is that the mapping $u_\e$ is not isometric anymore, and  a priori might fail to be an immersion. We will also need to define the Gauss map $\vec n^\e$ by the formula
$$
\vec n^\e := \frac{\partial_1 u_\e \wedge \partial _2 u_\e}{|\partial_1 u_\e \wedge \partial _2 u_\e|}.  
$$ But $\vec n^\e$ is well-defined only if $|\partial_1 u_\e \wedge \partial _2 u_\e|(x) \neq 0$ for a.e.\@ $x \in \TO$. Actually, for $\vec n^\e$ to be smooth we need a uniform lower bound on  $|\partial_1 u_\e \wedge \partial _2 u_\e|$; in other words we need that $u_\e$ is an immersion at least for small enough $\e>0$, a fact that is true but by no means trivial.  This is the subject of the following lemma, which also discusses the behavior of the pull-back metric induced by $u_\e$, i.e.\@:
$$
\g^\e:= (\nabla u_\e)^T \nabla u_\e = u_\e^\ast \eucl.
$$

 \begin{lemma}\label{iso-mol-immersion}
Let $\Omega \subset \R^2$ be a bounded smooth domain, $0<s<1$ and $u\in  I^{1+s, \frac2s} (\Omega, \R^3)$.  Let  $\TO \Subset \Omega$. Then  there exists $\e_0>0$ such that 
for all $0<\e<\e_0$, 
\begin{equation}
\label{below-bound}
\forall x\in \TO \quad  |\partial_1 u_\e \wedge \partial _2 u_\e|(x) = \sqrt{\det \g^\e} >\frac 12
\end{equation} and  as a consequence $\g^\e$ is a Riemannian metric,  $u_\e: \TO \to \R^3$ is a  smooth immersion on $\TO$ and the unit normal $\vec n^\e$ and the second fundamental form  $\two^\e_{ij} := \partial_{ij} u_\e \cdot \vec n^\e$ are well-defined. 
Moreover,  the following statements hold true
\begin{itemize}
\smallskip
\item[\rm (i)] 
$
\ds \lim_{\e \to 0} \|\g^\e - \eucl\|_{C^0(\TO)} =0. 
$ 
\smallskip
\item[\rm (ii)] 
$
\ds \lim_{\e \to 0} \|(\g^\e)^{-1} - \eucl\|_{C^0(\TO)}=0. 
$
\smallskip
\item[\rm (iii)]  $\|\g^\e - \eucl\|_{L^\frac 2s (\TO)} \le o(\e^{s})$  and $\|\nabla \g^\e\|_{L^{\frac 2s}(\TO)} + \|\nabla (\g^\e)^{-1}\|_{L^{\frac 2s}(\TO)} \le o(\e^{s-1})$. 
\smallskip
\item[\rm (iv)]  $\|\g^\e - \eucl\|_{L^\frac 1s(\TO)} \le o(\e^{2s})$ and for $k\ge 1$, $\|\nabla^k \g^\e\|_{L^{\frac 1s}(\TO)} \le o(\e^{2s-k})$. 
\smallskip 
\item[\rm (iv)] $\ds \lim_{\e \to 0} \|\g^\e - \eucl\|_{W^{2s, \frac 1s}(\TO)} =0$. 
 \end{itemize}
 \end{lemma}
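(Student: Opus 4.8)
The plan is to read off the quantitative estimates (iii)--(v) from the commutator bounds of \Cref{moli-est} and \Cref{commute-convergence}, and to obtain (i), (ii) together with the immersion statement \eqref{below-bound} from the embedding $W^{s,2/s}(\R^2)\hookrightarrow\mathrm{VMO}$ underlying \Cref{VMO}. The only preliminary remark needed is that, since $(\na u)^T\na u=\mathrm{Id}$ a.e., the two columns of $\na u$ are orthonormal almost everywhere, so $\|\na u\|_{L^\infty(\Om)}\le\sqrt2$ and hence $\|\na u_\e\|_{L^\infty(\TO)}\le\sqrt2$ for all $\e$, mollification being nonincreasing on $L^\infty$. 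Throughout I use $\partial_iu_\e=(\partial_iu)_\e$, $\g^\e_{ij}=\partial_iu_\e\cdot\partial_ju_\e$, and $\eucl_{ij}=\delta_{ij}=\partial_iu\cdot\partial_ju$ a.e.

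The crux is (i), since it yields simultaneously \eqref{below-bound} — and with it the facts that $u_\e$ is an immersion, $\g^\e$ a Riemannian metric, and $\vec n^\e,\two^\e_{ij}$ well-defined (using also $u_\e\in C^\infty$) — and since it cannot be read off from \Cref{commute-convergence}: in dimension two the space $W^{2s,1/s}$ sits exactly at the borderline $2s\cdot\tfrac1s=2$ of the Sobolev embedding into $C^0$. Instead I would argue through vanishing mean oscillation. Each component $\partial_iu$ lies in $W^{s,2/s}(\Om)\subset\mathrm{VMO}$, and the convergence in \Cref{VMO} applied to $\partial_iu$ (or rather its extension) is in fact \emph{uniform} in the base point $x\in\TO$: its proof bounds $\|\partial_iu-(\partial_iu)_\e(x)\|_{L^{2/s}(B_\e(x))}$ by $\|\partial_iu-(\partial_iu)_\e\|_{L^{2/s}(\R^2)}+C\e\|\na(\partial_iu)_\e\|_{L^{2/s}(\R^2)}=o(\e^s)$, an $o(\e^s)$ independent of $x$. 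Setting $a^\e_i(x):=(\partial_iu)_\e(x)$, Jensen's inequality upgrades this to $|B_\e(x)|^{-1}\int_{B_\e(x)}|a^\e_i(x)-\partial_iu(y)|\,dy\to0$ uniformly in $x$. Averaging over $y\in B_\e(x)$ the algebraic identity $a^\e_i(x)\cdot a^\e_j(x)-\delta_{ij}=a^\e_i(x)\cdot(a^\e_j(x)-\partial_ju(y))+(a^\e_i(x)-\partial_iu(y))\cdot\partial_ju(y)$ — valid because $\delta_{ij}=|B_\e(x)|^{-1}\int_{B_\e(x)}\partial_iu\cdot\partial_ju\,dy$ a.e.\ — and bounding the right-hand side by $\sqrt2$ times the two averages just discussed gives $\|\g^\e-\eucl\|_{C^0(\TO)}\to0$, which is (i). Then $\det\g^\e\to1$ uniformly, so $\det\g^\e>1/4$ on $\TO$ for $\e<\e_0$; combined with the Lagrange identity $|\partial_1u_\e\wedge\partial_2u_\e|^2=\g^\e_{11}\g^\e_{22}-(\g^\e_{12})^2=\det\g^\e$ this gives \eqref{below-bound}. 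Finally (ii) follows from (i) and Cramer's rule $(\g^\e)^{-1}=(\det\g^\e)^{-1}\operatorname{adj}\g^\e$, which in particular bounds $\|(\g^\e)^{-1}\|_{C^0(\TO)}$ uniformly in $\e<\e_0$.

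The remaining estimates are routine. For (iii): write $\g^\e-\eucl=(\na u_\e-\na u)^T\na u_\e+(\na u)^T(\na u_\e-\na u)$ (using $(\na u)^T\na u=\mathrm{Id}$ a.e.) and combine H\"older's inequality, $\|\na u_\e\|_{L^\infty}\le\sqrt2$ and \Cref{moli-est}(i) for $\na u\in W^{s,2/s}$ to get $\|\g^\e-\eucl\|_{L^{2/s}(\TO)}\le o(\e^s)$; expand $\na\g^\e$ by the Leibniz rule and invoke \Cref{moli-est}(ii) (with $k=1$, for $\na u$) to bound $\|\na^2u_\e\|_{L^{2/s}}=\|\na(\na u)_\e\|_{L^{2/s}}$, hence $\|\na\g^\e\|_{L^{2/s}(\TO)}$, by $o(\e^{s-1})$; and then $\na(\g^\e)^{-1}=-(\g^\e)^{-1}(\na\g^\e)(\g^\e)^{-1}$ together with (ii) gives the same bound for $\|\na(\g^\e)^{-1}\|_{L^{2/s}(\TO)}$. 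For (iv) and (v): use the splitting $\g^\e_{ij}-\eucl_{ij}=\sum_{m=1}^{3}\big((\partial_iu_m)_\e(\partial_ju_m)_\e-(\partial_iu_m\,\partial_ju_m)_\e\big)$, valid because $\sum_m\partial_iu_m\,\partial_ju_m=\delta_{ij}$ a.e.\ and mollification fixes constants; each summand is a commutator $f_\e g_\e-(fg)_\e$ with $f,g\in W^{s,2/s}(\Om)$ and $p=\tfrac2s\ge2$, so \Cref{moli-est}(iii) gives $\|\na^k(\g^\e-\eucl)\|_{L^{1/s}(\TO)}\le o(\e^{2s-k})$ for every $k\ge0$, which is (iv), while \Cref{commute-convergence} gives $\|\g^\e-\eucl\|_{W^{2s,1/s}(\TO)}\to0$, which is (v).

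The single genuine obstacle is the uniform convergence in the second paragraph: at the critical integrability the embedding $W^{2s,1/s}\hookrightarrow C^0(\R^2)$ just fails, so the $C^0$-smallness of $\g^\e-\eucl$ — and hence the quantitative immersion bound \eqref{below-bound} that makes the Gauss map and second fundamental form of $u_\e$ meaningful — has to be extracted from the vanishing mean oscillation of $\na u$, with care taken over uniformity in $x\in\TO$; every other item is bookkeeping of exponents in \Cref{moli-est} and \Cref{commute-convergence}.
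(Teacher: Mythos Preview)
Your proof is correct and follows essentially the same approach as the paper: the VMO-type estimate of \Cref{VMO} (with uniformity in $x\in\TO$) for (i) and \eqref{below-bound}, Cramer's rule for (ii), and the mollification/commutator estimates of \Cref{moli-est} and \Cref{commute-convergence} for (iii)--(v). The only cosmetic difference is that the paper phrases the key step in (i) via the distance of $\nabla u_\e(x)$ to the manifold $O(2,3)$ (picking the nearest $A(x)\in O(2,3)$ and writing $\g^\e-\eucl=(\nabla u_\e)^T\nabla u_\e-A^TA$), whereas you average the Gram-matrix identity directly; both arguments extract the same $o(1)$ from the same averaged quantity $\fint_{B_\e(x)}|\nabla u(y)-\nabla u_\e(x)|$.
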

\begin{remark}
$W^{s,\frac 2s}$ barely fails to embed in $L^\infty$ in two dimensions and  the $C^0$ convergence of the metrics $\g^\e$, which is a key feature 
of the statement, is not trivial.
\end{remark}
 
 \begin{proof}
 Consider the smooth manifold
 $$
 O(2,3):= \{ A\in \R^{3\times 2} : \quad A^T A = {\rm Id}\}, 
 $$ and note that if $u\in I^{1+ s, \frac 2s}(\Omega)$, then $\nabla u \in O(2,3)$ a.e.\@ in $\Omega$.  We claim that the Jacobian derivatives  $\nabla u_\e$ of the mollified sequence $u_\e$ are uniformly close to $O(2,3)$ on $\TO$. Note that $W^{s, \frac 2s}(\TO)$ does not embed in $L^\infty(\TO)$ and so $\nabla u_\e$ are not necessarily uniformly close to $\nabla u$. 

Lacking an $L^\infty$ estimate, the main idea is to use the approach of Schoen and Uhlenbeck \cite{SchU83}  and to apply the standard BMO estimate 
$$
\|\nabla u_\e - \nabla u\|_{BMO} \le \|\nabla u - \nabla u\|_{W^{s, \frac 2s}},
$$ on small balls around a point $x\in \Omega$.  See also \cite[Section I.1]{BNi95} for a discussion of this  topic and its applications in a larger context and \cite{BPSch13} regarding its application in approximating fractional Sobolev mappings into manifolds. 

Indeed,  applying \Cref{VMO} we have for all $x\in \TO$ and $\e< {\rm dist} (\TO, \partial \Omega)$:
$$
|{\rm dist} (\nabla u_\e(x), O(2,3))|^\frac ns \le \fint_{B_\e(x)} |\nabla u(y) - \nabla u_\e(x)|^\frac ns  \, dy \le o(1),    
$$ 
where 
$$
{\rm dist} (\nabla u_\e(x), O(2,3)):= \inf_{A\in O(2,3)} |\nabla u_\e(x)- A|,
$$ and $|A|$ denotes the Hilbert-Schmidt norm of a matrix $A$.  Let $A(x) \in O(2,3)$ be the matrix for which the infimum is attained.  
Therefore
$$
\begin{aligned}
\|\g^\e - \eucl\|_{C^0(\TO)} & = \sup_{x\in \TO}   |(\nabla u_\e (x))^T \nabla u_\e (x) - {\rm Id} |  = \sup_{x\in \TO}   |(\nabla u_\e)^T \nabla u_\e   -  A^TA |(x) \le o(1),
\end{aligned}
$$ which proves (i). In particular,  since $|\partial_1 u_\e \wedge \partial_2 u_\e| = \sqrt{\det \g^\e}$ we also obtain
$$
\lim_{\e\to 0}\||\partial_1 u_\e \wedge \partial_2 u_\e| - 1\|_{C^0(\TO)} = \lim_{\e\to 0} \|\sqrt{\det \g^\e} -1 \|_{C^0(\TO)} =0.
$$ This establishes   \eqref{below-bound}. The statements (ii) follows by straightforward calculations using the above uniform estimates. Since $\nabla u_\e$ stays uniformly bounded in $L^\infty(\TO)$, applying \Cref{moli-est}(ii) to the sequence $\nabla^2 u_\e$ yields (iii). Finally (iv)  and (v) follow respectively from the commutator estimate  \Cref{moli-est}(iii)  and  \Cref{commute-convergence} since  
$$
((\nabla u)^T \nabla u) \ast \varphi_\e =\ \eucl \ast \varphi_\e = \eucl \,\,\, \mbox{in} \,\,\, \TO
$$ for all $\e < {\rm dist (\TO, \partial \Omega)}$.
\end{proof}

We can therefore define the second fundamental form of $\g^\e$ on $\TO$ by
\begin{equation}\label{iso-mol-2nd}
\two^\e_{ij} := \p_{ij} u_\e \cdot \vec n^\e.
\end{equation} Also, remember that for any Riemannian metric $\g \in R^{2\times 2}_{{\rm sym}+}$, its Christoffel symbols are defined by
$$
\Gamma^l_{ij}(\g):= \frac 	12  \g^{lm} ( \partial_{i} \g_{mj}+ \partial_j \g_{im} - \partial_{m} \g_{ij}),  
$$ with the Einstein summation convention, where $\g^{lm}$ are the components of $\g^{-1}$.   We define therefore the tensor $\Gamma^\e$ by:
$$
\Gamma^\e := [\Gamma^{l,\e}_{ij}]_{i,j,l\in \{1,2\}},\quad \G^{l,\e}_{ij}:= \G^l_{ij} (\g^\e),
$$    with the usual convension $\ds |\Gamma^\e|:= (\sum_{i,j,l=1}^2 |\Gamma^{l,\e}_{ij}|^2) ^\frac 12$. 

{\begin{corollary}\label{Christoffel}
Let $\Omega, \TO, s$ and $u$ be as in \Cref{iso-mol-immersion}. Then  
\begin{itemize}
\smallskip
\item[{\rm (i)}]   $\|\two^\e\|_{L^\frac 2s (\TO)}  \le o(\e^{s-1})$ and $\|\two^\e\|_{\dot{W}^{s-1, \frac 2s}(\TO)} \le C$. 
\smallskip
\item[\rm (ii)]  $\|\Gamma^\e\|_{L^\frac 1s (\TO)} \le o(\e^{2s-1})$ and $\|\nabla \Gamma^\e\|_{L^\frac 1s (\TO)} \le o(\e^{2s-2})$. 
\smallskip
\item[\rm (iii)] If $s\ge \frac 12$, then $\ds \lim_{\e\to 0}  \|\Gamma^\e\|_{W^{2s-1, \frac 1s}(\TO)} =0$. 
\end{itemize}
\end{corollary}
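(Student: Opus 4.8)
The plan is to obtain (i) and (ii) directly from the quantitative mollification estimates of \Cref{iso-mol-immersion}, supplemented for the negative-order bound in (i) by the distributional product estimate of \Cref{weak-product-neg}, and then to deduce (iii) from (ii) by the same interpolation device used in the proof of \Cref{commute-convergence}.

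For the first bound of (i) I would use $|\vec n^\e|\equiv 1$, so that $|\two^\e|\le|\nabla^2 u_\e|=|\nabla((\nabla u)_\e)|$ pointwise, and invoke \Cref{moli-est}(ii) with $f=\nabla u\in W^{s,2/s}(\Omega)$ and $k=1$. For the uniform estimate $\|\two^\e\|_{W^{s-1,2/s}}\le C$, which is the delicate part, I would view $\two^\e_{ij}=\vec n^\e\cdot\partial_j(\partial_i u_\e)$ as a product of $\partial_j(\partial_i u_\e)\in W^{s-1,2/s}$ and $\vec n^\e\in W^{s,2/s}\cap L^\infty$. The first factor is uniformly bounded in $W^{s-1,2/s}$ since $\partial_i u_\e$ is uniformly bounded in $W^{s,2/s}$ (mollification does not increase the Gagliardo seminorm) and differentiation maps $W^{s,2/s}$ boundedly into $W^{s-1,2/s}$ by \eqref{neg-frac}. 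The second factor is uniformly bounded in $W^{s,2/s}\cap L^\infty$: the cross product $N^\e:=\partial_1 u_\e\wedge\partial_2 u_\e$ is bilinear in the uniformly $L^\infty$-bounded entries of $\nabla u_\e$, hence uniformly bounded in $W^{s,2/s}\cap L^\infty$ by the elementary inequality $[fg]_{W^{s,p}}\le\|f\|_{L^\infty}[g]_{W^{s,p}}+\|g\|_{L^\infty}[f]_{W^{s,p}}$; moreover $|N^\e|=\sqrt{\det\g^\e}>\tfrac12$ for small $\e$ by \eqref{below-bound}, $\det\g^\e$ is likewise uniformly bounded in $W^{s,2/s}\cap L^\infty$, and $t\mapsto\sqrt t$, $t\mapsto 1/t$ are Lipschitz away from $0$, so $1/|N^\e|$ and therefore $\vec n^\e=N^\e/|N^\e|$ inherit the uniform bound. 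Then \Cref{weak-product-neg} (applied on $\TO$, with $g=\partial_j(\partial_i u_\e)$, $\mu=\vec n^\e$, $n=2$) yields $\|\two^\e\|_{W^{s-1,2/s}}\le[\two^\e]_{W^{s-1,2/s}}\le C$ uniformly in $\e$; this is the step requiring $s>1/2$.

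For (ii) I would treat $\Gamma^\e$ schematically as the contraction of $(\g^\e)^{-1}$ with $\nabla\g^\e$: the uniform $C^0$-bounds on $(\g^\e)^{-1}$ coming from \Cref{iso-mol-immersion} give $|\Gamma^\e|\aleq|\nabla\g^\e|$ pointwise, so the first estimate follows from the $L^{1/s}$-bound $\|\nabla\g^\e\|_{L^{1/s}}\le o(\e^{2s-1})$; differentiating and using $\nabla(\g^\e)^{-1}=-(\g^\e)^{-1}(\nabla\g^\e)(\g^\e)^{-1}$ gives $|\nabla\Gamma^\e|\aleq|\nabla\g^\e|^2+|\nabla^2\g^\e|$ pointwise, so H\"older's inequality and the estimates $\|\nabla\g^\e\|_{L^{2/s}}^2\le o(\e^{2s-2})$ and $\|\nabla^2\g^\e\|_{L^{1/s}}\le o(\e^{2s-2})$ (again from \Cref{iso-mol-immersion}) give the second estimate. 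Finally for (iii), for $s\ge\tfrac12$ the number $\theta:=2s-1$ lies in $[0,1)$, so applying the interpolation inequality $\|h\|_{W^{\theta,1/s}}\aleq\|h\|_{L^{1/s}}^{1-\theta}\|h\|_{W^{1,1/s}}^{\theta}$ of \cite[Corollary 1.1.7]{Lun} to $h=\Gamma^\e$, together with the two bounds of (ii), gives $\|\Gamma^\e\|_{W^{2s-1,1/s}}\le o\big(\e^{(2s-1)(2-2s)+(2s-2)(2s-1)}\big)=o(1)\to 0$, since the exponent equals $(2s-1)[(2-2s)+(2s-2)]=0$.

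I expect the main obstacle to be the uniform bound $\|\two^\e\|_{W^{s-1,2/s}}\le C$ in (i): the Gauss map $\vec n^\e$ lies only in the critical space $W^{s,2/s}$, which in two dimensions neither embeds into $L^\infty$ nor is an algebra on its own, so one must carry the $L^\infty\cap W^{s,2/s}$ structure through every algebraic operation used to build $\vec n^\e$ and then appeal to the paraproduct-type bound \Cref{weak-product-neg}; the remaining estimates are routine bookkeeping with \Cref{iso-mol-immersion} and interpolation.
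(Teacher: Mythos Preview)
Your proposal is correct and follows essentially the same approach as the paper: the first bound in (i) via \Cref{moli-est}(ii), the negative-order bound via the product estimate (the paper cites \Cref{weak-product} directly rather than its corollary \Cref{weak-product-neg}, but this is the same argument), (ii) via the $L^\infty$ control of $(\g^\e)^{-1}$ combined with \Cref{iso-mol-immersion}(iii)--(iv), and (iii) by the interpolation device of \Cref{commute-convergence}. Your justification of the uniform $W^{s,2/s}\cap L^\infty$ bound on $\vec n^\e$ is more detailed than the paper's (which simply declares it ``obvious from \eqref{below-bound} and the similar bounds on $\nabla u_\e$''), but the underlying reasoning is the same.
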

}

\begin{proof}
(i) follows from \Cref{moli-est}(ii) applied to $\nabla^2 u_\e$ and  from \Cref{weak-product-local} with the estimate
$$
\|\two^\e\|_{\dot{W}^{s-1, \frac 2s}(\TO)} \le (\|\vec n_\e\|_{L^\infty(\TO)} + [\vec n_\e]_{W^{s,\frac 2s}(\TO)})  [\nabla u_\e]_{W^{s, \frac 2s}(\TO)} \le C,
$$ where the uniform bounds on $\vec n_\e$ are obvious from $\eqref{below-bound}$ and the similar bounds on $\nabla u_\e$. Applying  \Cref{iso-mol-immersion}(iii)-(iv) we obtain (ii) on $\TO$:  
   $$
\| \Gamma^\e\|_{L^{\frac 1s}} \le \|(\g^\e)^{-1}\|_{L^\infty} \|\nabla \g\|_{L^\frac 1s} \le o(\e^{2s-1}),
 $$  and
  $$
 \begin{aligned}
 \| \nabla \Gamma^\e\|_{L^{\frac 1s}} &  \le \|\nabla (\g^\e)^{-1}\|_{L^\frac 2s} \|\nabla \g^\e\|_{L^\frac 2s}  + \|(\g^\e)^{-1}\|_{L^\infty} \|\nabla^2 \g^\e\|_{L^\frac 1s}  \le o(\e^{2s-2}). 
 \end{aligned}
 $$ Interpolating these two estimates similar as in  \Cref{commute-convergence} yields (iii). 
\end{proof}
 
 Our next statements regard the asymptotic behavior of $\det \two^\e$, which enjoys a better than expected convergence due to its almost Jacobian determinant structure, and of ${\rm curl} \, \two^\e$:

\begin{proposition}\label{det2nd-e} 
Let $\Omega, \TO, u$ be as in \Cref{iso-mol-immersion} with $s\ge 1/2$. Then  for all $\phi \in C^\infty_c(\TO)$
$$
\Big |\int_\TO (\det \two^\e) \phi \Big | \le o(\e^{2s-1}) \|\nabla \phi\|_{L^{\frac{1}{1-s}}(\TO)} + o(1)  \|\phi\|_{L^\infty(\TO)}.
$$
\end{proposition}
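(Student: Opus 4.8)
The plan is to exploit the near-Jacobian structure of $\det \two^\e$. Recall that $\two^\e_{ij} = \partial_{ij} u_\e \cdot \vec n^\e$, so that $\det \two^\e = (\partial_{11} u_\e \cdot \vec n^\e)(\partial_{22} u_\e \cdot \vec n^\e) - (\partial_{12} u_\e \cdot \vec n^\e)^2$. Writing $a^\e_i := \partial_i u_\e$, one has (since $\vec n^\e$ is, up to the nonvanishing factor $\sqrt{\det \g^\e}$, equal to $a^\e_1 \wedge a^\e_2$) the algebraic identity $\det \two^\e = \tfrac{1}{\det \g^\e}\det\big[(\partial_{ij} u_\e)\cdot(\partial_1 u_\e \wedge \partial_2 u_\e)\big]_{ij}$, and by the standard computation for the Gauss curvature of an immersed surface this is a divergence-type / null-Lagrangian expression in the entries of $\nabla u_\e$ and $\nabla^2 u_\e$. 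Concretely, I would write $(\det\g^\e)\,\det\two^\e = \partial_1 V^\e_1 + \partial_2 V^\e_2 + (\text{error})$, where each $V^\e_\alpha$ is a quadratic-in-$\nabla^2 u_\e$, polynomial-in-$\nabla u_\e$ expression. The key point is that because $\g^\e = \eucl$ would give the genuine Gauss curvature equal to zero (the intrinsic curvature of the flat metric), the "true" Jacobian part integrates to something controlled by $\g^\e - \eucl$, which by \Cref{iso-mol-immersion}(iv)-(v) is small in $W^{2s,1/s}$ and in suitable $L^p$ norms.

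First I would make this precise by mollifying the isometry relation: since $(\nabla u)^T\nabla u = \eucl$ a.e., we have $(u^\ast\eucl)\ast\varphi_\e = \eucl$ on $\TO$, and differentiating twice gives relations among $\partial_{ij}u_\e \cdot \partial_k u_\e$ up to commutator errors estimated by \Cref{moli-est}(iii). Combining these with the Gauss equation for $\g^\e$ — namely $\det\two^\e = K(\g^\e)\det\g^\e$ where $K(\g^\e)$ is the Gauss curvature of the metric $\g^\e$ — reduces the problem to estimating $\int_\TO K(\g^\e)(\det\g^\e)\phi$. Now $K(\g^\e)$ is a universal second-order differential expression in $\g^\e$: schematically $K(\g^\e) = \partial\Gamma^\e + \Gamma^\e\ast\Gamma^\e$ contracted with $(\g^\e)^{-1}$. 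The term $\partial\Gamma^\e$ can be integrated by parts onto $\phi$ — but since $\Gamma^\e$ is only in $L^{1/s}$ (not a dual exponent paired with $L^{1/(1-s)}$ directly when $s=2/3$, where $1/s = 3/2$ and $1/(1-s)=3$, and indeed $2/3 + 1/3 = 1$ works, giving the $o(\e^{2s-1})\|\nabla\phi\|_{L^{1/(1-s)}}$ term via \Cref{Christoffel}(ii)); the quadratic term $\Gamma^\e\ast\Gamma^\e$ lies in $L^{1/(2s)}\subset L^{1/(2\cdot 2/3)} = L^{3/4}$, hence pairs with $\phi\in L^\infty$ and by \Cref{Christoffel}(ii) is $o(\e^{2(2s-1)}) = o(\e^{2s-1})$ for $s\le 1$; both contributions feed the claimed bound.

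I expect the main obstacle to be the bookkeeping of the commutator errors: the identity $\det\two^\e = K(\g^\e)\det\g^\e$ holds exactly for a smooth immersion, but here it must be checked that the discrepancy between $\det\two^\e$ and the divergence form coming from the Gauss–Bonnet integrand is genuinely controlled in the right norms, since $u_\e$ is not isometric and the terms $\partial_{ij}u_\e\cdot\partial_k u_\e$ are not exactly the Christoffel combinations but differ by mollification commutators $\nabla^2\big((u^\ast\eucl)\ast\varphi_\e - (u^\ast\eucl)_\e\big)$-type quantities. The estimates of \Cref{moli-est}(iii) give these errors with an $o(\e^{2s-k})$ gain, which is exactly what is needed; the delicate part is organizing the algebra so that every error term is paired against either $\|\nabla\phi\|_{L^{1/(1-s)}}$ with an $o(\e^{2s-1})$ prefactor or against $\|\phi\|_{L^\infty}$ with an $o(1)$ prefactor, using Hölder with the exponent splits $\tfrac{s}{2}+\tfrac{s}{2}+(1-s)=1$ and $s+s+(1-2s)=1$ repeatedly and invoking the $C^0$ bounds on $\g^\e, (\g^\e)^{-1}, \vec n^\e$ from \Cref{iso-mol-immersion}. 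Once the algebraic reduction to the Gauss curvature integrand is in place, the estimate follows by assembling these pieces.
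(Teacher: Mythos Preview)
Your overall strategy matches the paper's: since $u_\e$ is a genuine smooth immersion, the Gauss equation gives $\det\two^\e = R_{1212}(\g^\e)$, which is schematically $\partial\Gamma^\e + (\g^\e)^{-1}\,\Gamma^\e\ast\Gamma^\e$; integrate the first term by parts onto $\phi$ and estimate the second directly. Your treatment of the $\partial\Gamma^\e$ part is essentially correct and leads, after pairing $\Gamma^\e\in L^{1/s}$ with $\nabla\phi\in L^{1/(1-s)}$ and using \Cref{Christoffel}(ii), to the claimed $o(\e^{2s-1})\|\nabla\phi\|_{L^{1/(1-s)}}$ term. In the paper this is organised slightly differently, writing $\det\two^\e = -\,{\rm curl}^T{\rm curl}\,\g^\e + O(|\Gamma^\e|^2)$ and using $\|\nabla\g^\e\|_{L^{1/s}}=o(\e^{2s-1})$, but the content is the same.

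Two concrete issues, however:

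\emph{(a) No commutator errors arise.} You spend most of the proposal worrying about the discrepancy between $\det\two^\e$ and the Gauss curvature integrand, anticipating mollification commutators of the type in \Cref{moli-est}(iii). There is no such discrepancy: by \Cref{iso-mol-immersion}, $u_\e$ is an honest smooth immersion for small $\e$, so the identity $\det\two^\e = R_{1212}(\g^\e)$ holds \emph{exactly} pointwise on $\TO$. All the bookkeeping you describe is unnecessary.

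\emph{(b) The quadratic term estimate is wrong as written.} You assert that $\Gamma^\e\ast\Gamma^\e\in L^{1/(2s)}$ ``pairs with $\phi\in L^\infty$''. But for $s>1/2$ one has $1/(2s)<1$, so $L^{1/(2s)}$ is not contained in $L^1(\TO)$ and does not pair with $L^\infty$ via H\"older. In particular, $\|\Gamma^\e\|_{L^{1/s}}^2 = o(\e^{2(2s-1)})$ from \Cref{Christoffel}(ii) does \emph{not} control $\int_{\TO}|\Gamma^\e|^2|\phi|$. The paper closes this by using the Sobolev embedding $W^{2s-1,1/s}(\R^2)\hookrightarrow L^2(\R^2)$ (valid precisely for $s\ge 1/2$, since $(2s-1)-2s = -1 = -\tfrac{2}{2}$) together with \Cref{Christoffel}(iii), which gives $\|\Gamma^\e\|_{L^2}\to 0$ and hence $\int_{\TO}|\Gamma^\e|^2|\phi|\le o(1)\|\phi\|_{L^\infty}$. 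Alternatively one can interpolate $L^2$ between $L^{1/s}$ and $L^{2/s}$ using \Cref{Christoffel}(ii) and \Cref{iso-mol-immersion}(iii), but your $L^{1/(2s)}$ argument does not work.
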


 \begin{proof}
   By  \cite[Equations (2.1.2)]{hanhong} and the Gauss equation \cite[Equations (2.1.7)]{hanhong} we have on $\TO$:
$$
\begin{aligned}
\det \two^\e & = R_{2121}(\g^\e) = \g^\e_{1m}(\partial_1 \Gamma^{m,\e}_{22} - \partial_2 \Gamma^{m,\e}_{21} + \Gamma^{m,\e}_{1s} \Gamma^{s,\e}_{22}  
- \Gamma^{m,\e}_{2s} \Gamma^{s,\e}_{21})\\ & =
\partial_1 (\g^\e_{1m} \Gamma^{m,\e}_{22} ) - \partial_2 ( \g^\e_{1m} \Gamma^{m,\e}_{21})  +  O (|\Gamma^\e|^2) \\ &  =  2 \partial_{12} \g^\e_{12} - \partial_{11} \g^\e_{22} - \partial_{22} \g^\e_{11} +  O (|\Gamma^\e|^2)  \\ & = - {\rm curl}^T {\rm curl}\,\, \g^\e  + O (|\Gamma^\e|^2).
   \end{aligned} 
$$  Hence, using the embedding of $W^{2s-1, \frac 1s}(\R^2)$ into $L^2(\R^2)$ and \Cref{Christoffel}(iii) for $s\ge \frac 12$ we obtain:
$$
\begin{aligned}
\Big | \int_\TO (\det \two^\e)\phi \Big | 
& 
\aleq \int_\TO \Big | ({\rm curl}^T {\rm curl}\,\, \g^\e) \phi \Big |  + \|\Gamma^\e\|^2_{L^2(\TO)} \|\phi\|_{L^\infty(\TO)} 
\\ &
\aleq  \Big |  \int_\TO  ({\rm curl}\, \g^\e)  \cdot \nabla^\perp \phi \Big | + \|\G^\e\|^2_{W^{2s-1, \frac{1}{s}}(\TO)} \|\phi\|_{L^\infty(\TO)} 
\\ & 
\aleq \|\nabla \g^\e\|_{L^\frac 1s(\TO)} \|\nabla \phi\|_{L^\frac{1}{1-s}(\TO)} + o(1)    \|\phi\|_{L^\infty(\TO)}, 
\end{aligned} 
$$ which concludes the proof in view of \Cref{iso-mol-immersion}(iii). 
\end{proof}

\begin{proposition}\label{Curl2nd-e}
Let $\Omega, \TO, u$ be as in \Cref{iso-mol-immersion} with $\frac 23 \leq s < 1$.  
\begin{equation}\label{curlof2nd}
 \|\p_2 \two_{i1} ^\e - \p_1 \two^\e_{i2} \|_{L^1 (\TO)} \le o(1) \quad \mbox{for}\,\, i=1,2.
\end{equation} 
\end{proposition}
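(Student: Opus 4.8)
The plan is to combine the Codazzi--Mainardi equations for the \emph{smooth} mollifications $u_\e$ with the commutator estimates already established in \Cref{iso-mol-immersion} and \Cref{moli-est}. Fix $\TO\Subset\Omega$ and let $\e_0$ be the constant of \Cref{iso-mol-immersion}, so that for $0<\e<\e_0$ the map $u_\e$ is a smooth immersion near $\TO$, with smooth data $\g^\e,\vec n^\e,\G^\e,\two^\e$; in particular the left side of \eqref{curlof2nd} is classical. First I would record the pointwise Codazzi identity for $u_\e$: differentiating $\two^\e_{ij}=\p_{ij}u_\e\cdot\vec n^\e$, the third order terms $\p_{kij}u_\e$ cancel, leaving $\p_2\two^\e_{i1}-\p_1\two^\e_{i2}=\p_{i1}u_\e\cdot\p_2\vec n^\e-\p_{i2}u_\e\cdot\p_1\vec n^\e$, and inserting the Gauss relation $\p_{ij}u_\e=\G^{k,\e}_{ij}\p_k u_\e+\two^\e_{ij}\vec n^\e$ together with the Weingarten relation $\p_\alpha\vec n^\e=-(\g^\e)^{\beta\gamma}\two^\e_{\alpha\gamma}\p_\beta u_\e$ (legitimate since $\vec n^\e$ is a genuine unit normal, hence $\p_\alpha\vec n^\e$ is tangential) yields the classical Codazzi system \cite[Section 2.1]{hanhong} for $u_\e$ and its induced (non-flat) metric $\g^\e$:
\[
 \p_2\two^\e_{i1}-\p_1\two^\e_{i2}=\G^{k,\e}_{i2}\two^\e_{1k}-\G^{k,\e}_{i1}\two^\e_{2k}\qquad\text{on }\TO,\ i=1,2.
\]

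Next I would bound the right side in $L^1(\TO)$. Since $\eucl$ is constant, $\p_\alpha\g^\e=\p_\alpha(\g^\e-\eucl)$, and $\|\vec n^\e\|_{L^\infty(\TO)}$, $\|(\g^\e)^{-1}\|_{L^\infty(\TO)}$ are bounded uniformly in $\e$ by \Cref{iso-mol-immersion}; hence pointwise $|\G^{k,\e}_{i2}\two^\e_{1k}-\G^{k,\e}_{i1}\two^\e_{2k}|\aleq|\na(\g^\e-\eucl)|\,|\na^2 u_\e|$, and H\"older's inequality with the conjugate exponents $\tfrac2{2-s},\tfrac2s$ gives
\[
 \|\p_2\two^\e_{i1}-\p_1\two^\e_{i2}\|_{L^1(\TO)}\aleq\|\na(\g^\e-\eucl)\|_{L^{2/(2-s)}(\TO)}\,\|\na^2 u_\e\|_{L^{2/s}(\TO)}.
\]
Here $\|\na^2 u_\e\|_{L^{2/s}(\TO)}\le o(\e^{s-1})$ by \Cref{moli-est}(ii) applied to $\na^2 u_\e=\na((\na u)\ast\varphi_\e)$, $\na u\in W^{s,2/s}$.

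For the remaining factor I would interpolate the two commutator bounds of \Cref{iso-mol-immersion}(iii)--(iv), namely $\|\na(\g^\e-\eucl)\|_{L^{2/s}(\TO)}\le o(\e^{s-1})$ and $\|\na(\g^\e-\eucl)\|_{L^{1/s}(\TO)}\le o(\e^{2s-1})$. When $\tfrac23\le s<1$ one has $\tfrac1s\le\tfrac2{2-s}\le\tfrac2s$ (this is where $s\ge\tfrac23$ enters), so by log-convexity of $L^p$-norms
\[
 \|\na(\g^\e-\eucl)\|_{L^{2/(2-s)}(\TO)}\le\|\na(\g^\e-\eucl)\|_{L^{1/s}(\TO)}^{1-\theta}\,\|\na(\g^\e-\eucl)\|_{L^{2/s}(\TO)}^{\theta}\le o\big(\e^{(2s-1)(1-\theta)+(s-1)\theta}\big),
\]
with $\theta=\tfrac{3s-2}{s}\in[0,1)$ fixed by $\tfrac{2-s}2=(1-\theta)s+\theta\tfrac s2$; a direct computation gives $(2s-1)(1-\theta)+(s-1)\theta=1-s$, so this factor is $o(\e^{1-s})$. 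Multiplying the two estimates, $\|\p_2\two^\e_{i1}-\p_1\two^\e_{i2}\|_{L^1(\TO)}\aleq o(\e^{1-s})\,o(\e^{s-1})=o(1)$, which is \eqref{curlof2nd}.

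I expect the genuinely delicate point to be this last interpolation rather than the differential geometry. The second fundamental form $\two^\e$ itself diverges (like $\e^{s-1}$ in $L^{2/s}$), since $u$ is only $W^{1+s,2/s}$ and not $W^{2,2/s}$, so a crude product bound on $\G^\e\cdot\two^\e$ blows up; what compensates is that $\G^\e$ inherits the decay of the \emph{commutator} $\g^\e-\eucl=(\na u_\e)^T\na u_\e-\big((\na u)^T\na u\big)\ast\varphi_\e$ through its gradient. The two competing rates balance exactly at $s=\tfrac23$, which forces the intermediate exponent $\tfrac2{2-s}$ to be squeezed between the two available Lebesgue exponents --- the single place where the hypothesis $s\ge\tfrac23$ is used, and it is sharp.
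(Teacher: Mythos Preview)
Your proof is correct and follows essentially the same route as the paper: the Codazzi--Mainardi identity for the smooth immersion $u_\e$, H\"older with the conjugate pair $\frac2s,\frac{2}{2-s}$, and the balance $o(\e^{s-1})\cdot o(\e^{1-s})=o(1)$, with $s\ge\frac23$ entering precisely so that the intermediate exponent $\frac{2}{2-s}$ sits between $\frac1s$ and $\frac2s$. The only cosmetic difference is how the bound $\|\G^\e\|_{L^{2/(2-s)}}\le o(\e^{1-s})$ is obtained: you interpolate the $L^{1/s}$ and $L^{2/s}$ commutator estimates of \Cref{iso-mol-immersion}(iii)--(iv), whereas the paper sets $s':=\frac{2-s}{2}\in(\tfrac12,s]$, uses the embedding $W^{s,2/s}\hookrightarrow W^{s',2/s'}$, and reads off $\|\G^\e\|_{L^{1/s'}}\le o(\e^{2s'-1})=o(\e^{1-s})$ directly from \Cref{Christoffel}(ii) at level $s'$.
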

\begin{remark} \label{L1-not-enough}
An $L^1$ estimate for ${\rm curl}\, \two^\e$  is not enough for a Hodge decomposition for $\two^\e$, hence a better than $L^1$ estimate is crucial for  completing the same proof as in \cite{DLP20} for our main theorem.   We will hence adapt  a new approach as explained in the following section.
\end{remark}

\begin{proof}
The Codazzi-Mainardi equations \cite[Equation (2.1.6)]{hanhong} for the immersion $u_\e$  read
$$
\partial_2 \two^{\e}_{i1} -\partial_1 \two^{\e}_{i2}= \two^{\e}_{l1} \Gamma_{i2}^{l, \e}
 -\two^{\e}_{l2} \Gamma_{i1}^{l,\e}. 
$$  Now since $s \ge \frac 23$,  $ \frac 12 < s':= \frac{2-s}{2} \le s$,  the embedding
$$
W^{s, \frac 2s}(\R^2)   \hookrightarrow W^{s', \frac 2{s'}}
$$ implies that $u \in I^{1+s', \frac 2{s'}}(\Omega, \R^3)$. Applying \Cref{Christoffel}(i) for $s$  and \Cref{Christoffel}(ii) for $s'$ yields
for any $\phi\in C^\infty_c(\Omega)$,  
\[
\|\partial_2 \two^{\e}_{i1} -\partial_1 \two^{\e}_{i2}\|_{L^1(\TO)} \aleq \|\two^\e \|_{L^{\frac 2s}(\TO)} \| \G^\e\|_{L^\frac{1}{s'}(\TO)} \le o(\e^{s-1}) o(\e^{2s'-1}) = o(1). \qedhere
\]  \end{proof}
   
\section{Second fundamental form for \texorpdfstring{$W^{1+s, 2/s}$}{W(1+2,2/s)}
isometric immersions for \texorpdfstring{$\frac 12<s < 1$}{1/2<s<1}}\label{2nd}

Given an isometric immersion  $u\in I^{1+s, \frac2s}_{loc}(\Omega, \R^3)$ and a bounded smooth domain $\TO \Subset \Omega$, with $s>1/2$, we shall define a weak notion of the second fundamental form $\two$  as a distribution in $\dot{W}^{s-1,\frac{2}{s}} (\TO, \gl)$.  In order to apply the results of \Cref{sec-mol},  note that for a $\delta>0$ small enough $\TO\Subset \TO_\delta \Subset \TO_{2\delta}  \Subset \Omega$ and $u\in I^{1+s, \frac2s}(\TO_{2\delta}, \R^3)$, where $
\TO_\delta:= \{x\in \R^2:\,\, {\rm dist}(x, \TO)<\delta\}.$

The second fundamental form  of a given immersion $u:\TO \to \R^3$ in the chart defined by $u$ itself is expressed by the  product 
\begin{equation}\label{2ndform}
\RN{2} _{ij}:= \partial_{ij} u \cdot \vec n,
\end{equation} where for all $x\in \TO$, $\vec n(x)$ is the unit normal to the immersed surface $u(\TO$) at $u(x)$.  Under our regularity assumptions,  and since by the isometry condition  for a.e.\@ $x\in \TO$ we have  $|\partial_i  u(x)| = 1$, we have $\nabla u\in L^\infty(\TO, \R^{3\times 2})$, and so the unit normal 
\begin{equation}\label{unit-normal}
\vec n := \partial_1 u \wedge \partial _2 u \in W^{s,\frac 2s} \cap L^\infty (\TO).
\end{equation}  On the other hand, $\partial_{ij} u \in W^{s-1, \frac 2s}_{loc}(\TO, \R^3)$, and could  be a mere distribution. However, the existence of  the distributional product $\RN{2} _{ij}$ under these regularity assumptions  is  justified by \Cref{weak-product}. To summarize we state the following definition:  

\begin{definition}\label{def: II}
Let $\Omega \subset\R^2$ be an open set and $u\in I^{1+s, \frac2s}_{loc}(\Omega, \R^3)$ with $\frac 12<s<1$. Then, through  \Cref{weak-product}, we may define its (weak) second fundamental form  
\begin{equation*}
\two= \two (u):= [\two_{ij}] _{i,j\in\{1,2\}} \in W^{s-1, \frac{2}{s}}_{loc} \big(\Omega,\gl \big), 
\end{equation*} by \eqref{2ndform}, namely, 
$$
\two_{ij}[\phi] := \sum_{k=1}^3 (\vec n^k \partial_{ij}u^k )  [ \phi]
$$ for all $\phi \in W^{1-s, \frac 2{2-s}}_0(\Omega)$ with $\supp \phi\Subset \Omega$.
\end{definition}

 \begin{proposition}\label{2nd-convergence}
 Let $\Omega \subset \R^2$ be an open set, $\frac 23\le s<1$ and $u\in  I^{1+s, \frac2s}_{loc} (\Omega, \R^3)$. For all bounded smooth domain $\TO \Subset \Omega$  
  \begin{itemize}
 \smallskip
\item[{\rm (i)}]$ \ds \lim_{\e \to 0} \|\vec n^\e - \vec n\|_{W^{s, \frac 2s}(\TO)} =0$.  
  \smallskip
\item[{\rm (ii)}] $\ds  \lim_{\e \to 0} \|\two^\e - \two\|_{\dot{W}^{s-1, \frac 2s}(\TO)} =0$. 
  \end{itemize}
 \end{proposition}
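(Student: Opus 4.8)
The plan is to prove (i) first and then deduce (ii) via the product estimates of \Cref{weak-product}. For (i), recall that $\vec n^\e = \partial_1 u_\e \wedge \partial_2 u_\e$ up to the normalizing factor $\sqrt{\det \g^\e}$, and that by \Cref{iso-mol-immersion}(i) this factor converges to $1$ uniformly on $\TO$. Since $\partial_i u_\e \to \partial_i u$ in $W^{s,\frac2s}(\TO)$ (this is just the standard mollification convergence in the Gagliardo seminorm together with $L^{\frac2s}$ convergence of $u_\e$) and $\partial_i u_\e$ is uniformly bounded in $L^\infty(\TO)$ by \eqref{below-bound} and the isometry of $u$, the wedge product $\partial_1 u_\e \wedge \partial_2 u_\e$ converges to $\partial_1 u \wedge \partial_2 u$ in $W^{s,\frac2s}(\TO)$: one uses the bilinearity of $\wedge$ and the algebra-type estimate $[ab]_{W^{s,q}} \aleq \|a\|_{L^\infty}[b]_{W^{s,q}} + \|b\|_{L^\infty}[a]_{W^{s,q}}$, valid here since all factors are uniformly bounded in $L^\infty \cap W^{s,\frac2s}$. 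Dividing by $\sqrt{\det \g^\e}$ and using that this is a smooth function of $\g^\e$, uniformly bounded below by $\tfrac12$ and converging to $1$ in $C^0(\TO)$, together with \Cref{iso-mol-immersion}(iii) controlling $\|\nabla \g^\e\|_{L^{2/s}}$ (hence the Gagliardo seminorm of $\sqrt{\det\g^\e}$ on $\TO$), one concludes $\vec n^\e \to \vec n$ in $W^{s,\frac2s}(\TO)$.

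For (ii), I would write $\two^\e_{ij} - \two_{ij} = \sum_k \big(\vec n^{\e,k}\,\partial_{ij}u_\e^k - \vec n^k\,\partial_{ij}u^k\big)$ and split this as
\[
\vec n^{\e,k}\,\partial_{ij}(u_\e^k - u^k) + (\vec n^{\e,k} - \vec n^k)\,\partial_{ij}u^k .
\]
For the first term, apply \Cref{weak-product}(i) (in its local form \Cref{weak-product-local}, with $\mu = \vec n^{\e,k}$, which is uniformly bounded in $L^\infty \cap W^{s,\frac2s}$ over $\TO$) to get
\[
[\vec n^{\e,k}\,\partial_{ij}(u_\e^k-u^k)]_{W^{s-1,\frac2s}(\TO)} \aleq [\partial_i(u_\e^k - u^k)]_{W^{s,\frac2s}(\TO)}\big([\vec n^{\e,k}]_{W^{s,\frac2s}} + \|\vec n^{\e,k}\|_{L^\infty}\big),
\]
which tends to $0$ because $\partial_i u_\e \to \partial_i u$ in $W^{s,\frac2s}(\TO)$. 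For the second term, I would use \Cref{weak-product}(ii): set $\mu_\e := \vec n^{\e,k} - \vec n^k$, which satisfies $\sup_\e \|\mu_\e\|_{L^\infty} < \infty$ (by the uniform bounds on $\vec n^\e$ and $\vec n$) and $[\mu_\e]_{W^{s,\frac2s}(\TO)} \to 0$ by part (i), so that $[\mu_\e\,\partial_{ij}u^k]_{W^{s-1,\frac2s}(\TO)} \to 0$. Summing over $k$ and combining the two pieces, together with control of the $L^{\frac2s}$-part of the norm (which follows from the $W^{s-1,\frac2s}$ seminorm bound and the fact that $W^{s-1,\frac2s}(\TO) \hookrightarrow$ itself continuously via the extension/equivalence-of-norms statements in \Cref{frac}), gives $\|\two^\e - \two\|_{W^{s-1,\frac2s}(\TO)} \to 0$.

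The main obstacle is establishing (i) rigorously at the level of the Gagliardo seminorm, because $u$ is only in $W^{1+s,\frac2s}$ and $W^{s,\frac2s}$ just fails to embed into $L^\infty$ in dimension two, so the $C^0$ convergence of $\g^\e$ (hence of $\sqrt{\det\g^\e}$ and of $\vec n^\e$'s denominator) is genuinely delicate — this is exactly the content of \Cref{iso-mol-immersion}, which I would lean on heavily. The only additional care needed is the product/quotient estimates in $W^{s,\frac2s}$: these are legitimate precisely because all the relevant factors ($\partial_i u_\e$, $\partial_i u$, $\vec n^\e$, $\vec n$, $1/\sqrt{\det\g^\e}$) are uniformly bounded in $L^\infty$, so one never needs a Sobolev embedding into $L^\infty$ for the fractional factors. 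Once (i) is in hand, (ii) is essentially a formal consequence of \Cref{weak-product}(i)--(ii), with no further difficulty.
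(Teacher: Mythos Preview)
Your treatment of (ii) matches the paper's proof exactly: the same splitting
\[
\two^\e_{ij}-\two_{ij}=\sum_k \vec n^{\e,k}\partial_{ij}(u_\e^k-u^k)+\sum_k(\vec n^{\e,k}-\vec n^k)\partial_{ij}u^k,
\]
followed by parts (i) and (ii) of \Cref{weak-product} (equivalently, \Cref{weak-product-neg}).

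For (i), however, your product-estimate route has a gap. Writing $\partial_1 u_\e\wedge\partial_2 u_\e-\partial_1 u\wedge\partial_2 u=(\partial_1 u_\e-\partial_1 u)\wedge\partial_2 u_\e+\partial_1 u\wedge(\partial_2 u_\e-\partial_2 u)$ and invoking $[ab]_{W^{s,q}}\aleq\|a\|_{L^\infty}[b]_{W^{s,q}}+\|b\|_{L^\infty}[a]_{W^{s,q}}$ leaves you with terms like $\|\partial_1 u_\e-\partial_1 u\|_{L^\infty}\,[\partial_2 u_\e]_{W^{s,2/s}}$, and you have no control on $\|\partial_i u_\e-\partial_i u\|_{L^\infty}$: the exponent $s\cdot\tfrac2s=2$ is critical, so $\partial_i u$ need not be continuous and mollification need not converge uniformly. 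Uniform $L^\infty$ \emph{bounds} on the factors give boundedness of the product in $W^{s,2/s}$, not convergence of the difference. The same issue reappears when you divide by $\sqrt{\det\g^\e}$.

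The paper sidesteps this by arguing directly on the Gagliardo integrand via dominated convergence: along any subsequence one extracts a further subsequence with $\nabla u_\e\to\nabla u$ a.e.; since $\vec n^\e$ is a Lipschitz function of $\nabla u_\e$ on the uniformly bounded range (thanks to \eqref{below-bound}), one has $\vec n^\e\to\vec n$ a.e.\ and the Gagliardo integrand of $\vec n^\e-\vec n$ is dominated by an integrable majorant built from those of $\nabla u_\e$ and $\nabla u$. Your product approach can be salvaged, but only by inserting exactly this dominated-convergence step for the cross term $\int |(\partial_1 u_\e-\partial_1 u)(y)|^{2/s}\!\int\frac{|\partial_2 u(x)-\partial_2 u(y)|^{2/s}}{|x-y|^4}\,dx\,dy$; at that point you are essentially reproducing the paper's argument rather than avoiding it.
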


\begin{proof}
Note that  for any subsequence, we can always find a subsequence of $\nabla u_\e$ converging point-wise to  
$\nabla u$ and that $\nabla u_\e$ are uniformly $L^\infty$-bounded in $\e$.  Hence, a straightforward norm calculation and dominated convergence theorem implies (i). To show (ii), we write
$$
\two^\e_{ij} - \two_{ij} = \sum_{k=1}^3 \vec n^{\e, k}\partial_{ij} {u^k_\e} - \vec n^{k}\partial_{ij} {u^k} =  \sum_{k=1}^3  \vec n^{\e, k}(\partial_{ij} {u^k_\e} -  \partial_{ij} {u^k}) +  \sum_{k=1}^3 (\vec n^{\e, k}  - \vec n^{k}) \partial_{ij} {u^k},
$$ where $\vec n^\e = (\vec n^{\e, 1}, \vec n^{\e, 2}, \vec n^{\e, 3})$. Now in view of (i), the convergence of each summing term in the $\dot{W}^{s-1, \frac 2s}(\TO)$ norm follows   in order from  the first and second parts of  \Cref{weak-product-local}. 
\end{proof}
 An immediate conclusion of \Cref{Curl2nd-e}  is the following statement regarding the second fundamental form of $u$ when $s\ge \frac 23$: 

\begin{lemma}\label{two=nab-f}
Let $\frac 23 \le s <1$  and $u \in I_{loc}^{1+ s, \frac 2s}(\Omega, \R^3)$. Assume that $\TO \Subset \Omega$  is a simply connected bounded smooth domain and let $\two$ be as in Definition~\ref{def: II}. Then there exists $f \in W^{s,\frac{2}{s}}(\TO,\R^2)$ such that $\two = \na f$ in the sense of distributions. \end{lemma}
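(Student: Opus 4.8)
\textbf{Proof plan for Lemma~\ref{two=nab-f}.}
The statement asserts that the weak second fundamental form $\two = [\two_{ij}]$, a priori only an element of $W^{s-1,\frac 2s}_{loc}(\TO,\gl)$, is in fact a distributional gradient of some $f \in W^{s,\frac 2s}(\TO,\R^2)$. The natural strategy is to set $f_i := (\two_{i1}, \two_{i2})$ for $i = 1,2$ and check that each row $(\two_{i1},\two_{i2})$ is curl-free in $\mathcal D'(\TO)$; then, since $\TO$ is simply connected, a distributional Poincar\'e lemma produces a potential $f_i \in \mathcal D'(\TO)$ with $\na f_i = (\two_{i1},\two_{i2})$, and the regularity $\two_{ij} \in W^{s-1,\frac 2s}$ together with \eqref{neg-frac} (or \eqref{grad-lap}) upgrades $f_i$ to $W^{s,\frac 2s}_{loc}(\TO)$. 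Stacking $f = (f_1, f_2)$ then gives $\two = \na f$ in the distributional sense as desired.

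First I would establish the curl-free property. By \Cref{2nd-convergence}(ii) we have $\two^\e_{ij} \to \two_{ij}$ in $W^{s-1,\frac 2s}(\TO)$, hence $\p_2 \two^\e_{i1} - \p_1 \two^\e_{i2} \to \p_2\two_{i1} - \p_1\two_{i2}$ in $W^{s-2,\frac 2s}(\TO) \subset \mathcal D'(\TO)$. On the other hand, \Cref{Curl2nd-e} gives $\|\p_2\two^\e_{i1} - \p_1\two^\e_{i2}\|_{L^1(\TO)} \to 0$. Testing against an arbitrary $\phi \in C^\infty_c(\TO)$ and passing to the limit in both senses (the $\mathcal D'$-convergence identifies the limit of the pairing, while the $L^1 \to 0$ bound shows that pairing is $0$), we conclude $\p_2\two_{i1} - \p_1\two_{i2} = 0$ in $\mathcal D'(\TO)$ for $i = 1,2$.

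The remaining step is the distributional Poincar\'e lemma on the simply connected domain $\TO$ with the quantitative gain of one derivative: given $g = (g_1,g_2) \in W^{s-1,\frac 2s}(\TO,\R^2)$ with $\p_2 g_1 - \p_1 g_2 = 0$, there is $f \in W^{s,\frac 2s}_{loc}(\TO)$ with $\na f = g$. On $\R^2$ one can see this directly: writing $f = \lap^{-1}(\p_1 g_1 + \p_2 g_2) = \lap^{-1}\div g$ (after extending, or working locally on a ball and using the extension operator), the estimate \eqref{grad-lap} gives $[f]_{W^{s,\frac 2s}} \aeq [\na\lap^{-1}\div g]_{W^{s,\frac 2s}} \aleq [g]_{W^{s-1,\frac 2s}}$, and $\na f = g$ follows from $\div g$ being the divergence and $\curl g = 0$ — that is, $g - \na\lap^{-1}\div g$ has both vanishing curl and vanishing divergence, hence is harmonic with a suitable decay/regularity and therefore zero. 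Localizing to exhaust $\TO$ by balls and patching (the patching is consistent because potentials differ by constants on overlaps, $\TO$ being connected), one obtains $f \in W^{s,\frac 2s}_{loc}(\TO,\R^2)$.

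\textbf{Main obstacle.} The genuinely delicate point is the second step: reconciling the weak limit in $W^{s-2,\frac 2s}$ (the only topology in which $\two^\e \to \two$ is asserted, after differentiating) with the $L^1$ smallness from \Cref{Curl2nd-e}, so as to legitimately conclude that the distributional curl of $\two_i$ vanishes. One must be careful that these are limits in a priori different spaces; the resolution is that both converge in $\mathcal D'(\TO)$ (since $L^1(\TO) \hookrightarrow \mathcal D'(\TO)$ and $W^{s-2,\frac 2s}(\TO) \hookrightarrow \mathcal D'(\TO)$), and $\mathcal D'$-limits are unique, so the $W^{s-2,\frac 2s}$-limit $\p_2\two_{i1} - \p_1\two_{i2}$ must coincide with the $\mathcal D'$-limit of the $L^1$-functions, which is $0$. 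A secondary technical nuisance is carrying out the Poincar\'e lemma with the sharp regularity gain on a general simply connected $\TO$ rather than on $\R^2$ or a ball; this is handled by localization and the fractional extension operator for smooth bounded subdomains, together with the scaling relation \eqref{neg-frac}.
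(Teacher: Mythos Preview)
Your overall strategy---establish $\curl \two = 0$ distributionally and then invoke a Poincar\'e lemma with a one-derivative regularity gain---is exactly the paper's. Your first step is correct and matches the paper's reasoning: the $L^1$-vanishing of $\curl \two^\e$ from \Cref{Curl2nd-e} together with the $W^{s-1,\frac 2s}$-convergence $\two^\e\to\two$ from \Cref{2nd-convergence}(ii) both give distributional convergence, and uniqueness of limits in $\mathcal D'(\TO)$ forces $\curl\two=0$.

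The second step, however, contains a real flaw as written. When you extend $g\in W^{s-1,\frac 2s}(\TO,\R^2)$ to $\R^2$ and set $f=\lap^{-1}\div g$, the difference $g-\nabla f$ does \emph{not} have vanishing curl on all of $\R^2$: the extension operator does not preserve the constraint $\curl g=0$ outside $\TO$. So your ``harmonic with decay, hence zero'' argument fails globally; at best you obtain that $g-\nabla f$ is harmonic on $\TO$, which (via Weyl and a classical Poincar\'e lemma for the smooth harmonic remainder) yields a potential in $W^{s,\frac 2s}_{loc}(\TO)$, not in $W^{s,\frac 2s}(\TO)$ as the lemma claims. Your ball-patching suffers from the same loss at the boundary.

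The paper handles this step differently and more cleanly. It first mollifies $\two$ itself to $\two_\eps := \two\ast\varphi_\eps$ (not to be confused with $\two^\e$), which remains curl-free on $\TO$ for $\eps<\delta$ since $\curl\two=0$ on $\TO_\delta$. Then, rather than inverting $\lap$ on $\R^2$, it solves the elliptic boundary value problem
\[
\Delta\widetilde\two_\eps=\two_\eps\ \text{in }\TO,\qquad \curl\widetilde\two_\eps=0\ \text{on }\partial\TO,
\]
and sets $f^\eps:=\div\widetilde\two_\eps$. The boundary condition forces $\curl\widetilde\two_\eps\equiv 0$ in $\TO$ (it is harmonic and vanishes on $\partial\TO$), so the identity $\nabla\div-\Delta=-\nabla^\perp\curl$ gives $\nabla f^\eps=\two_\eps$ exactly on $\TO$, with the global estimate $\|f^\eps\|_{W^{s,\frac 2s}(\TO)}\aleq\|\two_\eps\|_{W^{s-1,\frac 2s}(\TO)}$ from the fractional elliptic regularity in \cite[Theorem 3.4.3/3(i)]{RuSi}. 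Passing to the limit produces $f\in W^{s,\frac 2s}(\TO)$, not merely $W^{s,\frac 2s}_{loc}$. If you prefer to avoid the BVP, a workable fix of your approach would be to first observe that $\curl\two=0$ actually holds on the slightly larger domain $\TO_\delta$ (the same argument applies there), carry out your localization on $\TO_\delta$, and then restrict to $\TO$.
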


\begin{proof}
An immediate consequence of \Cref{Curl2nd-e} is that  ${\rm curl} \, \two$ satisfies the Codazzi equations in the sense of distributions, i.e.\@
 ${\rm curl} \, \two=0$:
\begin{equation}\label{codazzi} 
\p_2 \two_{11} -  \p_1 \two_{12}   =0 \quad \text{ and } \quad  \p_2 \two_{21} - \p_1 \two_{22} =0\quad \mbox{in} \,\, \mathcal D'(\TO_\delta).
\end{equation}
 Let us  consider a direct regularisation of the second fundamental form $\two$. With $\two$ defined as in \Cref{def: II}, we set
\begin{equation}\label{two-mol}
\two_\e:= \two\ast \varphi_\e \in C^\infty\big(\TO_\delta ;\gl\big).
\end{equation}
Here $\two_\e \map \two$ in $W^{s-1,\frac{2}{s}}(\TO_\delta)$ as $\e \map 0$. The order of convolution and differentiation can be interchanged, so $\two_\e$  satisfies  \eqref{codazzi} in $\mathcal D'(\TO)$ for $\e<\delta$. Therefore, since $\TO$ is simply-connected, there exists $f^\e \in C^\infty(\Omega,\R^2)$ such that $\two_\e=\na f^\e$. By standard elliptic regularity theory we may choose $f^\e$ to be convergent to some $f$ in $W^{s,\frac{2}{s}}$. Since $\two_\e$ converges only in a very weak norm, and we must be careful that the traces of the solutions are well-defined on the boundary, hereby we justify these estimates.

 In order to find the sequence $f^\e$,  we first solve for 
$$
\left \{ \begin{array}{ll}
\Delta {\widetilde \two}_\e = \two_\e & \mbox{in} \,\, \TO \\
{{\rm curl} \, \widetilde \two}_\e = \p_\nu \widetilde \two \cdot \tau - \p_\tau \widetilde \two \cdot \nu = 0 &  \mbox{on} \,\, \partial \TO 
\\   \widetilde \two \cdot \nu =0 &   \mbox{on} \,\, \partial \TO
\end{array} \right .
$$ where $\nu$ and $\tau := \nu^\perp$ are respectively the outward normal and tangential fields to $\partial \TO$.  Note that the above system is a  basic elliptic system discussed at length in the literature of elliptic systems for differential forms, see e.g.\@  \cite[Lemma 1.6.5]{Schw95}.  However, from another point of view, if we flatten the boundary the Dirichlet and Neumann boundary conditions decouple and so there is no problem in directly applying the theory of  elliptic equations. By \cite[Theorem 3.4.3/3(i)]{RuSi}, 
$\widetilde \two_\e$ satisfies the estimate
$$
\|{\widetilde \two}_\e\|_{W^{1+s, \frac 2s}(\TO)} \aleq \|\two_\e\|_{_{W^{s-1, \frac 2s}(\TO)}} \le C. 
$$ Taking the curl of the equation, we note that ${\rm curl}\, \widetilde \two_\e$ is harmonic and vanishes on the boundary, hence  ${\rm curl}\, \widetilde \two_\e \equiv 0$ in $\TO$. Now we use the identity
$$
\nabla {\rm div}\, \widetilde \two_\e  - \Delta \widetilde \two_\e   = - \nabla^\perp {\rm curl} \, \widetilde \two_\e,
$$ to deduce that $f^\e := {\rm div}\,\widetilde \two_\e$  satisfies $\nabla f^\e = \two_\e$ with the estimate 
$$
 \|f^\e\|_{W^{s,\frac 2s}(\TO)} \aleq \| \widetilde \two_\e\|_{W^{1+s, \frac 2s}(\TO)} \aleq 1. 
 $$  Therefore $f^\e$ converges  in the sense of distributions to some   $f \in W^{s, \frac 2s}(\TO)$  satisfying $\nabla f = \two$.  
 
 \end{proof}

\section{Developability of components and \texorpdfstring{$c^{1, \frac s2}$}{c1s/2}-regularity}\label{sec-compnt}
\begin{theorem}\label{compnt-develop}
Let  $\Omega \subset \R^2$ be a  bounded smooth domain and assume that $u\in  I^{1+s, \frac{2}{s}}_{loc}(\Omega,\R^3)$ with $\frac{2}{3}\leq s<1$. Then for each $m\in \{1,2,3\}$,   the component $u^m$ satisfies 
$$
\Jac(\nabla u^m) \equiv 0 \,\, \mbox{in}\,\, \mathcal D' (\Omega)
$$
and as a consequence is $c^{1,\frac s2}$-regular  and   developable by \Cref{th-main1}. 
\end{theorem}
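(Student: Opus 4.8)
The plan is to reduce the whole statement to the single assertion that $\Jac(\nabla u^m)=0$ in $\mathcal D'(\Omega)$ for each $m\in\{1,2,3\}$. Once that is known, applying \Cref{th-main1} to $f:=\nabla u^m$ — which lies in $W^{s,\frac 2s}_{loc}$ and trivially satisfies $\curl(\nabla u^m)=0$ — yields that $\nabla u^m$ is $c^{0,\frac s2}$-regular and developable, i.e.\ $u^m$ is $c^{1,\frac s2}$-regular and developable. Since the vanishing of $\Jac(\nabla u^m)$ is a local claim and \Cref{th-main1} is local, I would fix a simply connected bounded smooth domain $\TO\Subset\Omega$ and prove $\Jac(\nabla u^m)=0$ in $\mathcal D'(\TO)$. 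For this reduction one may moreover assume $s=\frac23$, using $W^{s,\frac 2s}_{loc}\hookrightarrow W^{\frac 23,3}_{loc}$ for $s\ge\frac23$ together with the weak continuity of the distributional Jacobian; the sharp exponent $\frac s2$ in the conclusion is then recovered afterwards from \Cref{th-main1} applied with the original $s$.

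To compute $\Jac(\nabla u^m)$ I would pass through the mollifications $u_\e:=u\ast\varphi_\e$, which are smooth immersions on $\TO$ for small $\e$ by \Cref{iso-mol-immersion}. By the weak continuity of the distributional Jacobian on $W^{s,\frac 2s}$ (cf.\ \cite[Lemma 1.3]{LS19}) and $u_\e\to u$ in $W^{1+s,\frac 2s}$ near $\overline\TO$, one has $\Jac(\nabla u^m)[\phi]=\lim_{\e\to 0}\int_\TO\det(\nabla^2 u_\e^m)\,\phi$ for $\phi\in C_c^\infty(\TO)$. Into this I would feed the Gauss decomposition of the smooth immersion $u_\e$, namely $\partial_{ij}u_\e=\Gamma^{l,\e}_{ij}\partial_l u_\e+\two^\e_{ij}\vec n^\e$, so that $\nabla^2 u_\e^m=\big[\Gamma^{l,\e}_{ij}\partial_l u_\e^m\big]_{ij}+\vec n^{\e,m}\,\two^\e$, and expand the $2\times2$ determinant into the principal term $(\vec n^{\e,m})^2\det\two^\e$, the terms linear in $\Gamma^\e$ (with the other factor bounded in $\nabla u_\e$), and $\det\big[\Gamma^{l,\e}_{ij}\partial_l u_\e^m\big]$, which is quadratic in $\Gamma^\e$. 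Tested against $\phi$, the two non-principal groups vanish in the limit by \Cref{Christoffel}: the linear ones are $\aleq \|\two^\e\|_{L^{\frac 2s}}\|\Gamma^\e\|_{L^{\frac 1s}}\|\phi\|_{\infty}\le o(\e^{s-1})\,o(\e^{2s-1})=o(\e^{3s-2})=o(1)$ by parts (i)--(ii), and the quadratic one is $\aleq\|\Gamma^\e\|_{L^2}^2\|\phi\|_{\infty}\to0$ by part (iii) together with the critical embedding $W^{2s-1,\frac 1s}(\R^2)\hookrightarrow L^2$. For the principal term, the key is to recycle the computation from the proof of \Cref{det2nd-e}, $\det\two^\e=-{\rm curl}^T{\rm curl}\,\g^\e+O(|\Gamma^\e|^2)$: since $(\vec n^{\e,m})^2\phi$ is itself a legitimate smooth compactly supported test function, one integrates by parts twice to obtain
\[
\int_\TO (\vec n^{\e,m})^2\det\two^\e\,\phi=\int_\TO {\rm curl}\,\g^\e\cdot\nabla^\perp\!\big((\vec n^{\e,m})^2\phi\big)+\int_\TO (\vec n^{\e,m})^2\,O(|\Gamma^\e|^2)\,\phi,
\]
and then estimates by $\|\nabla\g^\e\|_{L^{\frac 1s}}\big(\|\nabla\vec n^\e\|_{L^{\frac 2s}}\|\phi\|_{\infty}+\|\phi\|_{C^1}\big)+\|\Gamma^\e\|_{W^{2s-1,\frac 1s}}^2\|\phi\|_{\infty}$, which is $o(1)$ by \Cref{iso-mol-immersion} and \Cref{Christoffel}(iii), the Hölder pairings being exactly balanced at $s=\frac23$. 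Hence $\Jac(\nabla u^m)=0$ on $\TO$, and covering $\Omega$ concludes. More conceptually, this principal term is exactly what \Cref{th:lambdajac} is designed for: writing $\two=\nabla f$ with $f\in W^{s,\frac 2s}(\TO;\R^2)$ from \Cref{two=nab-f}, and passing to the limit in the Gauss decomposition — the $\Gamma^\e$-contribution dropping out by \Cref{Christoffel}(iii) and \Cref{weak-product-neg} — gives the distributional identity $\nabla(\nabla u^m)=\vec n^m\,\nabla f$ with $\vec n^m\in L^\infty\cap W^{s,\frac 2s}$; then \Cref{th:lambdajac} with $n=2$ and $\lambda=\vec n^m$ reduces $\Jac(\nabla u^m)[\phi]$ to $\Jac(f)\big[(\vec n^m)^2\phi\big]$, where $\Jac(f)=\det\two=0$ is the weak Gauss equation, established by the same estimates.

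The main obstacle is the principal term. As warned in \Cref{L1-not-enough}, the Codazzi estimate \Cref{Curl2nd-e} controls $\curl\,\two^\e$ only in $L^1$, which is too weak to Hodge-decompose $\two^\e$ and thereby verify the hypotheses of \Cref{th-main1} for $u^m$ directly. The way around this is never to form a primitive of $\two^\e$, but to keep the almost-exact-divergence form $\det\two^\e=-{\rm curl}^T{\rm curl}\,\g^\e+O(|\Gamma^\e|^2)$ and integrate by parts against the test function already carrying the scalar factor $(\vec n^{\e,m})^2$; that this factor may be absorbed into the test function without spoiling the vanishing of the determinant is precisely the ``determinant under products by scalar functions'' behavior isolated in \Cref{th:lambdajac}. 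A secondary, purely technical point is that the Hölder exponents in all these estimates are critical and balance exactly when $s=\frac23$, which is why the reduction to that case at the outset is convenient.
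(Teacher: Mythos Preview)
Your direct argument is correct and is essentially the paper's own proof: the same Gauss decomposition of $\nabla^2 u_\e^m$, the same three-term expansion of the determinant, and the same estimates via \Cref{det2nd-e}, \Cref{iso-mol-immersion}, and \Cref{Christoffel}; your upfront reduction to $s=\tfrac23$ is a cosmetic simplification (the paper keeps general $s\ge\tfrac23$ and instead uses the embedding $W^{s,\frac2s}\hookrightarrow W^{2(1-s),\frac1{1-s}}$ to bound $\|\nabla\vec n^{\e,m}\|_{L^{1/(1-s)}}$, and similarly passes to $s'=\tfrac{2-s}{2}$ for the $\two^\e\!:\!\Gamma^\e$ cross term, but at $s=\tfrac23$ all these exponents collapse to yours).

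One caveat on your ``more conceptual'' alternative: it reverses the paper's logic. In the paper \Cref{th:lambdajac} is \emph{not} used here; rather, \Cref{compnt-develop} is proved first by the direct estimate, and only afterwards (\Cref{Jac(f)=0}) is \Cref{th:lambdajac} invoked to deduce $\Jac(f)=0$ from $\Jac(\nabla u^m)=0$ by summing over $m$ via $\sum_m(\vec n^m)^2=1$. Going in your direction would require $\Jac(f)=0$ first, but the estimates you cite control $\det\two^\e$, not $\det(\two\ast\varphi_\e)=\det(\nabla f_\e)$, and these are different mollifications --- bridging them is essentially the content of the proof of \Cref{th:lambdajac} itself, so this route is not a genuine shortcut.
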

 
 \begin{proof}
  The argument follows closely that of \cite[Theorem 3]{DLP20}. Let  us fix $m\in \{1,2,3\}$ and set 
 $$
 g:=  \nabla u^m \in W_{loc}^{s, \frac 2s}(\Omega, \R^2).
 $$   Let $\TO \Subset \Omega$. For $\delta>0$ small enough we have $u\in  I^{1+s, \frac{2}{s}}(\TO_\delta,\R^3)$. For $\e<\delta$ we let $u_\e$ be the mollified sequence of immersions with the properties discussed in \Cref{sec-mol}. Note that by  \cite[Equation (2.1.3)]{hanhong} we have
 \begin{equation}\label{covar}
 \partial_{ij} u_\e^m = \Gamma^{k,\e}_{ij} \partial_k u^m_\e + \two^\e_{ij} \vec n^{\e,m}.
\end{equation} 
 Obviously $g_\e = \nabla u_\e^m$  and hence   for all $\phi\in C^\infty_c(\TO)$:
 $$
\begin{aligned}
 \int_\TO \Jac (g_\e) \phi  & =    \int_\TO \det (\nabla^2 u^m_\e) \phi     
=  \int_\Omega  {\rm det} (\Gamma^{\e} \cdot \nabla u^m_\e+  \two^\e \vec n^{\e,m}  ) \phi \\ & = 
  \int _\Omega \det (\two^\e) (\vec n^{\e,m})^2 \phi  +      \int_\Omega {\rm det} (\Gamma^{\e}\cdot \nabla u^m_\e) \phi  
+     \int_{\Omega}  \vec n^{\e,m} \two^\e   : {\rm cof}(\Gamma^\e \cdot \nabla u_\e^m ) \phi \\ & = I_1^\e + I^\e _2 + I^\e_3.
\end{aligned} 
$$  We  claim that as $\e\to 0$ the limit of each term $I^\e_j$ is $0$, which will complete the proof as $\Jac (g)$ is the distributional limit of ${\rm Jac}(g_\e)$ \cite[Lemma 1.3]{LS19}.   By \Cref{det2nd-e} 
$$
\begin{aligned}
|I_1^\e| & \le o(\e^{2s-1}) \Big (\| (\nabla \vec n^{\e,m}) \phi \|_{L^\frac 1{1-s}(\TO)}  + \|\nabla \phi\|_{L^\frac 1{1-s}(\TO)}\Big ) + o(1) \| (\vec n^{\e,m}) ^2\phi\|_{L^\infty(\TO)} 
\\ &
\le  o(\e^{2s-1}) \|\nabla \vec n^{\e,m}\|_{L^\frac 1{1-s}(\TO)} \|\phi \|_{L^\infty(\TO)} + o(\e^{2s-1})  \|\nabla \phi\|_{L^\frac 1{1-s}(\TO)}  + o(1) \| \phi\|_{L^\infty(\TO)} . 
\end{aligned} 
$$ However note  for $s\ge \frac 23$ the embedding
$$
\|\nabla u_\e \|_{W^{2(1-s), \frac 1{1-s}}(\TO)} \aleq  \|\nabla u_\e \|_{W^{s, \frac 2s}(\TO)} \aleq 1. 
$$ Therefore applying \Cref{moli-est}(ii) to $\nabla u_\e$ and in view of \eqref{below-bound} we obtain
$$
\|\nabla \vec n^{\e,m}\|_{L^\frac 1{1-s}(\TO)} \le o(\e^{2(1-s) -1}) \le o(\e^{1-2s}). 
$$ We conclude for $I_1^\e$ that
$$
|I_1^\e|  \le o(1)  \| \phi\|_{L^\infty(\TO)} + o(\e^{2s-1})  \|\nabla \phi\|_{L^\frac 1{1-s}(\TO)}  \to 0. 
$$ Now, regarding $I_2^\e$ observe that $\nabla u_\e$ is uniformly bounded in $L^\infty$ and as previously observed we can obtain by the embedding of $W^{2s-1, \frac 1s}$ into $L^2$, through \Cref{Christoffel}(iii):
$$
|I_2^\e| \aleq \|\G^\e\|^2_{L^2(\TO)} \|\phi\|_{L^\infty(\TO)}= o(1).
$$  Finally, to finish the proof of our claim, we estimate  once again similar as in \Cref{Curl2nd-e}
\[
|I^\e_3| \aleq  \int_\TO |\Gamma^\e| |\two^\e|  |\phi|  \le   \|\two^\e \|_{L^\frac{2}{s}(\TO)} \| \G^\e\|_{L^\frac{1}{s'}(\TO)} \|\phi\|_{L^\infty(\TO)} \le o(1), \quad \mbox{for} \,\, s\ge \frac 23. \qedhere
\]\end{proof}

\section{Developability}\label{sec-dev}
 We already know by \Cref{compnt-develop} that each component of $u$ is independently developable and has the required regularity. What remains to be shown is that the constancy segments and regions of the developability are the same  for the three components. 
 
 Let $\TO$ be any  smooth bounded domain supported in $\Omega$ and let $f$ be defined as in \Cref{two=nab-f}. We first claim that any such $f$ is developable.  
 
 \begin{proposition}\label{Jac(f)=0}
Let $\Omega, \TO, s, u, f$ be as in \Cref{two=nab-f}.   Then ${\rm Jac}(f)=0$ in $\mathcal D'(\TO)$. In particular since $\nabla f= \two$ is symmetric, the conclusions of  \Cref{th-main1} hold true for $f$. 
\end{proposition}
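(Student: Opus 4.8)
The plan is to identify, by passing to the limit in the mollified immersions, the distributional relation $\nabla g^m=\vec n^m\,\nabla f$ for each component $g^m:=\nabla u^m$, and then to feed this into \Cref{th:lambdajac} together with the Pythagorean identity $\sum_m(\vec n^m)^2=1$ for the Gauss map.

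First I would fix $m\in\{1,2,3\}$ and pass to the limit in the covariant identity \eqref{covar}, namely $\partial_{ij}u_\e^m=\Gamma^{k,\e}_{ij}\partial_k u_\e^m+\two^\e_{ij}\vec n^{\e,m}$ on $\TO$. The left-hand side converges to $\partial_{ij}u^m$ in $W^{s-1,2/s}(\TO)$ since $u_\e^m\to u^m$ in $W^{1+s,2/s}$. For the Christoffel term, \Cref{Christoffel}(iii) gives $\Gamma^\e\to0$ in $W^{2s-1,1/s}(\TO)$, and since $s\ge\tfrac23$ the borderline Sobolev embedding $W^{2s-1,1/s}(\TO)\hookrightarrow W^{s-1,2/s}(\TO)$ places $\Gamma^\e$ in the range of \Cref{weak-product-neg}; as $\nabla u_\e^m$ is bounded in $W^{s,2/s}\cap L^\infty(\TO)$, the product $\Gamma^\e\cdot\nabla u_\e^m$ tends to $0$ in $W^{s-1,2/s}(\TO)$. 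For the last term I would write $\vec n^{\e,m}\two^\e-\vec n^m\two=\vec n^{\e,m}(\two^\e-\two)+(\vec n^{\e,m}-\vec n^m)\two$ and apply \Cref{2nd-convergence} together with the two halves of \Cref{weak-product-neg}: the first for the factor $\two^\e-\two\to0$ against the $W^{s,2/s}\cap L^\infty$-bounded $\vec n^{\e,m}$, the second for the factor $\vec n^{\e,m}-\vec n^m\to0$ in $W^{s,2/s}$ with uniformly bounded $L^\infty$ norm against the fixed distribution $\two$. This yields $\partial_{ij}u^m=\two_{ij}\vec n^m$ in $\mathcal D'(\TO)$, i.e.\ $\nabla g^m=\vec n^m\,\nabla f$ — exactly the hypothesis \eqref{eq:distnablaflnablag} of \Cref{th:lambdajac} with $\lambda:=\vec n^m\in L^\infty\cap W^{s,2/s}(\TO)$ (recall \eqref{unit-normal}) and $n=2$.

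Then I would invoke \Cref{th:lambdajac}, whose range $s\in[\tfrac{n}{n+1},1)=[\tfrac23,1)$ is precisely our hypothesis, to get $\Jac(g^m)[\phi]=\Jac(f)[(\vec n^m)^2\phi]$ for all $\phi\in C_c^\infty(\TO)$. By \Cref{compnt-develop}, $\Jac(g^m)=\Jac(\nabla u^m)\equiv0$, so $\Jac(f)[(\vec n^m)^2\phi]=0$ for $m=1,2,3$. Summing over $m$, using $\sum_m(\vec n^m)^2=|\vec n|^2=1$ a.e.\ and linearity of $\Jac(f)$ in the test function, gives $\Jac(f)[\phi]=0$ for every $\phi$, i.e.\ $\Jac(f)=0$ in $\mathcal D'(\TO)$. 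Finally, $\nabla f=\two$ is symmetric ($\two_{ij}=\two_{ji}$), hence $\curl f=\two_{12}-\two_{21}=0$; together with $\Jac(f)={\rm Det}\,\nabla f=0$ this puts $f$ under the hypotheses of \Cref{th-main1}, which delivers $f\in c^{0,s/2}(\TO)$ and the developability dichotomy for $f$.

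I expect the main obstacle to be the first step: carrying out the limit in \eqref{covar} in the negative-order space $W^{s-1,2/s}(\TO)$ requires the borderline embedding $W^{2s-1,1/s}\hookrightarrow W^{s-1,2/s}$ (and hence $s\ge\tfrac23$) to bring the Christoffel term into the scope of the weak-product estimates, and it relies crucially on the strong convergences of \Cref{2nd-convergence}. Once $\nabla g^m=\vec n^m\nabla f$ is established, the rest is immediate; indeed, \Cref{th:lambdajac} is precisely the tool devised to make this step work at the threshold $s=\tfrac23$, where the more direct Hodge-decomposition route of \cite{DLP20} breaks down (cf.\ \Cref{L1-not-enough}).
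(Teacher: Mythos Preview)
Your proof is correct and follows essentially the same route as the paper: pass to the limit in \eqref{covar} to obtain $\nabla g^m=\vec n^m\nabla f$, invoke \Cref{th:lambdajac} to get $\Jac(g^m)[\phi]=\Jac(f)[(\vec n^m)^2\phi]$, use \Cref{compnt-develop} and sum over $m$ via $|\vec n|^2=1$. The only difference is cosmetic: the paper is content to take the limit in \eqref{covar} merely in $\mathcal D'(\TO)$ (so the Christoffel term is handled directly from $\Gamma^\e\to0$ in $W^{2s-1,1/s}\hookrightarrow L^2$ against the $L^\infty$-bounded $\nabla u_\e^m$), whereas you upgrade everything to $W^{s-1,2/s}$ via the borderline embedding and \Cref{weak-product-neg}; either suffices.
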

\begin{proof}
   
We will once again use \Cref{covar}, but this time we will directly pass to the limit as $\e \to 0$. Applying \Cref{weak-product-neg} in view of \Cref{2nd-convergence}, we note that 
$$
\two_{ij}^\e \vec n^{\e, m}  \longrightarrow  \two_{ij}   \vec n^{m}\,\, \mbox{in} \,\, \mathcal D'(\TO) \,\, \mbox{as}\,\,  \e\to 0.  
$$ Also, \Cref{Christoffel}(iii) implies that the first term in the right hand side of \eqref{covar} converges to $0$ in $\mathcal D'(\TO)$. Since $\partial_{ij} u_\e$ converges to $\partial_{ij} u$, we conclude with the following identity for any pair $i,j\in \{1,2\}$:
$$
\partial_{ij} u^m = \two_{ij}   \vec n^{m}.  
$$ Letting $g_m: = \nabla u^m$, this identity reads
\begin{equation}\label{coherence}
\nabla g_m  =  \vec n^{m} \two = \vec n^m \nabla f.
\end{equation} Note that $f,g_m \in W^{s, \frac 2s} (\TO)$ and  $\vec n\in W^{s, \frac 2s} \cap L^\infty (\TO)$. Hence  \Cref{th:lambdajac}  yields that for any 
$\phi \in C^\infty_c(\TO)$
$$
\Jac (g_m)[\phi]= \Jac (f) [(\vec n^m) ^2 \phi]. 
$$  On the other hand by \Cref{compnt-develop} we have ${\rm Jac}(g_m) =0$, therefore for all $\phi \in C^\infty_c(\TO)$
\begin{equation*}
\Jac (f) [\phi] =    \Jac (f) [\sum_{m=1}^3  (\vec n^m)^2 \phi]  =\sum_{m=1}^3 \Jac (f) [(\vec n^m)^2 \phi] = \sum_{m=1}^3 \Jac (g_m) [\phi]  =0.  \qedhere
\end{equation*}  \end{proof}

  We complete the proof of  \Cref{thm-develop}.  We have shown that $f$ is continuous, and for any $x\in \TO$, it is either constant around $x$, or  it  is constant along the connected component of the intersection of a  line passing through $x$ with $\TO$.  By \cite[Corollary 2.9 and Lemma 2.10]{DLP20}, for any $x\in \TO$, there exists a disk $B_x \ni x$ in $\TO$  and Lipschitz unit vector field $\vec \eta$ on $B_x$ such that for all $\psi \in C^\infty_c (B_x)$  
  $$
  (\nabla f)\cdot [\psi \vec \eta] = \int_{B_x} f {\rm div}(\psi \vec \eta) =0. 
  $$  Note that the vector field $\vec \eta$ determines the constancy directions for $f$.  We claim that for each $m$ and  for all $\psi \in C^\infty_c (B_x)$ 
\begin{equation}\label{gm-dev}
  \int_{B_x} {\rm div}(\psi \vec \eta) \nabla u^m   =  0. 
  \end{equation} We remark that proving this claim and  applying \cite[Lemma 2.10 and Proposition  2.1]{DLP20} yields the desired simultaneous constancy of $\nabla u^m$ along the segments defined by $\vec \eta$  and completes the proof of our main theorem.  
  
 To prove  \eqref{gm-dev},  first note that by \Cref{2nd-convergence}-(i)  and \Cref{weak-product-local} we obtain
  $$
  \lim_{\e \to 0} \| \vec n^{\e, m} \nabla f  - \vec n^m \nabla f \|_{\dot{W}^{s-1, \frac 2s}(\TO)}=0, 
  $$  which implies through \eqref{asso-prod}
  $$
(\vec n^m \nabla f) [\psi \vec \eta]  =  \lim_{\e\to 0}    (\vec n^{\e, m} \nabla f) [\psi \vec \eta]  = \lim_{\e\to 0}   \nabla f \cdot  [(\vec n^{\e,m}) \psi \vec \eta] =0.
  $$   Combined with \eqref{coherence} we obtain that for $m=1,2,3$
  $$
 \int_{B_x}{\rm div}(\psi \vec \eta) g_m  =  (\nabla g_m)\cdot [\psi \vec \eta] =  (\vec n^m\nabla f) \cdot [\psi \vec \eta]  =0,
  $$ which establishes \eqref{gm-dev} as claimed.

 \section{Distributional Jacobian determinant  behaves like a determinant}\label{natural}
 
 In this section we will prove \Cref{th:lambdajac}. We first gather some known preliminary results regarding the statement of the theorem.
 
\subsection{Preliminaries}
The following useful lemmas are well-known facts. They can be derived via a tedious argument based on Littlewood-Paley theory and paraproducts \cite{WY99a} which extends an earlier work on the limiting case $s=1$ by \cite{CLMS}.   Much more elegant proofs can be achieved following \cite{BN11}  based on the harmonic extension, see also \cite{BBM05}, and we refer to \cite{LS20} for generalizations. 
\begin{lemma}[Distributional Jacobian]\label{la:distjac}
Let $n\ge 2$, $\Omega \subset \R^n$ be a bounded smooth domain or $\Omega = \R^n$. Assume that $ \frac{n-1}{n} < s <1$, $f \in W^{s,\frac{n}{s}} (\Omega, \R^n)$, $\psi \in W_0^{(1-s)n, \frac{1}{1-s}}(\Omega)$. Then 
\[
 \Jac(f)[\psi] := \lim_{k \to \infty}\Jac(f_k)[\psi_k] 
\]
is well-defined as a distribution in $W^{n(s-1), \frac 1s}$, where $f_k\in C^\infty(\overline \Omega)$ and $\psi_k \in C^\infty_c(\Omega)$ are  any two sequences of  functions converging   to $f$ and $\psi$ in 
their respective norms. 
\end{lemma}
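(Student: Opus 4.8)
The plan is to follow the harmonic-extension approach of Brezis--Nguyen \cite{BN11}, which reduces the whole statement to a single multilinear estimate for smooth functions; granted that estimate, the existence of the limit, its independence of the approximating sequences, and the membership of $\Jac(f)$ in $W^{n(s-1),\frac1s}$ all follow formally. First I would reduce to $\Omega=\R^n$: for a bounded smooth domain one extends $f$ by a fixed bounded linear operator to $W^{s,\frac ns}(\R^n)$ and extends $\psi$ by zero (which is admissible because $n(1-s)\cdot\frac1{1-s}=n\neq1$, so that $W_0^{n(1-s),\frac1{1-s}}(\Omega)=W_{00}^{n(1-s),\frac1{1-s}}(\Omega)$ and the zero-extension is bounded), and independence from the choice of extension of $f$ follows by a localization/mollification argument exactly as in the proof of \Cref{weak-product-local}. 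So it suffices to fix $f\in C^\infty(\R^n;\R^n)$ (suitably decaying) and $\psi\in C_c^\infty(\R^n)$, denote by $f^{i,h}$ and $\psi^h$ the harmonic extensions to $\R^{n+1}_+$, and estimate $\Jac(f)[\psi]=\int_{\R^n}\psi\,\det\nabla f$.

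The representation step: since each one-form $df^{i,h}$ is closed, the $n$-form $\omega:=\psi^h\,df^{1,h}\wedge\cdots\wedge df^{n,h}$ has $d\omega=d\psi^h\wedge df^{1,h}\wedge\cdots\wedge df^{n,h}$, and Stokes' theorem on $\R^{n+1}_+$ (the contribution at $t=\infty$ vanishing by the decay of $\psi^h$ and of $Df^{i,h}$) gives, up to sign, $\Jac(f)[\psi]=\int_{\R^{n+1}_+}d\psi^h\wedge df^{1,h}\wedge\cdots\wedge df^{n,h}$; in particular this integral depends only on the boundary traces. Expanding the wedge product, each summand is a product of exactly one derivative of each of the $n+1$ factors, and exactly one of these derivatives is the vertical one $\partial_t=\partial_{n+1}$, so
\[
\big|\Jac(f)[\psi]\big|\aleq \int_{\R^{n+1}_+}|D\psi^h|\,|Df^{1,h}|\cdots|Df^{n,h}|.
\]

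Next I would insert the weights $t^{a_0},t^{a_1},\dots,t^{a_n}$ with $a_0=s-n(1-s)$ and $a_i=1-\frac sn-s$ for $1\le i\le n$, chosen so that $a_0+a_1+\dots+a_n=0$, and apply Hölder on $\R^{n+1}_+$ with exponents $\frac1{1-s},\frac ns,\dots,\frac ns$ (whose reciprocals sum to $(1-s)+n\cdot\frac sn=1$). Using the Caffarelli--Silvestre/Triebel--Lizorkin characterization of the homogeneous seminorms recalled in the proof of \Cref{weak-product} together with $W^{\alpha,p}=F^\alpha_{p,p}$ — legitimate for the $\psi$-factor precisely because the hypothesis $\frac{n-1}{n}<s<1$ forces $0<n(1-s)<1$ — each weighted factor is controlled, $\|t^{a_0}|D\psi^h|\|_{L^{1/(1-s)}(\R^{n+1}_+)}\aeq[\psi]_{W^{n(1-s),1/(1-s)}(\R^n)}$ and $\|t^{a_i}|Df^{i,h}|\|_{L^{n/s}(\R^{n+1}_+)}\aeq[f^i]_{W^{s,n/s}(\R^n)}$, whence the multilinear bound
\[
\big|\Jac(f)[\psi]\big|\aleq [\psi]_{W^{n(1-s),\frac1{1-s}}(\R^n)}\prod_{i=1}^n[f^i]_{W^{s,\frac ns}(\R^n)}\le \|\psi\|_{W_{00}^{n(1-s),\frac1{1-s}}}\,\|f\|_{W^{s,\frac ns}}^n.
\]

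With this estimate the remaining steps are routine. For $f_k\to f$ in $W^{s,\frac ns}$ with $f_k\in C^\infty(\overline\Omega)$ and $\psi_k\to\psi$ in $W_0^{n(1-s),\frac1{1-s}}$ with $\psi_k\in C_c^\infty(\Omega)$, telescoping one argument at a time — using $\det\nabla f_k-\det\nabla f_j=\sum_{i=1}^n\det(\nabla f_k^1,\dots,\nabla(f_k^i-f_j^i),\dots,\nabla f_j^n)$ and the multilinear bound — shows $\{\Jac(f_k)[\psi_k]\}$ is Cauchy and that its limit is unchanged if one interlaces two such sequences; the bound passes to the limit, giving $|\Jac(f)[\psi]|\aleq\|f\|_{W^{s,\frac ns}}^n\|\psi\|_{W_{00}^{n(1-s),\frac1{1-s}}}$, so by the duality convention of Section~\ref{frac} (with $(\frac1{1-s})'=\frac1s$ and $n(s-1)\in(-1,0)$) the functional $\Jac(f)$ is identified with an element of $(W_{00}^{n(1-s),\frac1{1-s}}(\Omega))'=W^{n(s-1),\frac1s}(\Omega)$. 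I expect the main obstacle to be the endpoint nature of the weighted estimate: there is no slack, the Hölder exponents and the weight exponents $a_i$ are forced, and one must use the \emph{exact} identifications $W^{s,\frac ns}=F^s_{\frac ns,\frac ns}$ and $W^{n(1-s),\frac1{1-s}}=F^{n(1-s)}_{\frac1{1-s},\frac1{1-s}}$ — closing this bookkeeping, with $\frac{n-1}{n}<s$ being exactly what keeps $n(1-s)<1$, is the heart of the matter. A secondary technical point is making the Stokes step rigorous (vanishing of the boundary term at $t=\infty$, convergence of traces as $t\to0^+$), which is harmless for smooth decaying approximants.
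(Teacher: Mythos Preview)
Your proposal is correct and follows precisely the Brezis--Nguyen harmonic-extension argument that the paper itself singles out as the ``elegant proof'' in the discussion preceding the lemma; the paper does not give its own proof here but merely cites \cite[Lemma~1.3]{LS19}. Your bookkeeping of the weights $a_0=s-n(1-s)$, $a_i=1-\tfrac{s}{n}-s$ and the H\"older exponents is exactly right, and you correctly identify that the hypothesis $s>\tfrac{n-1}{n}$ is what keeps $n(1-s)\in(0,1)$ so that the $W^{n(1-s),1/(1-s)}=F^{n(1-s)}_{1/(1-s),1/(1-s)}$ identification is available for the $\psi$-factor.
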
 
See, e.g., \cite[Lemma 1.3]{LS19} for a proof. 

\begin{lemma}\cite[Theorem 3.2]{LS20} \label{la:dualspace}
Let $n \geq 1$, $\lambda, g \in W^{\frac{n}{n+1},n+1}(\R^n)$ and  $\phi \in   C^\infty_c (\R^n, \bigwedge^{n-2}(\R^n))$. Then  
\[
\left| \int_{\R^n} \lambda dg \wedge d\phi\right| \aleq [\lambda]_{W^{\frac{n}{n+1},n+1}(\R^n)}\, [g]_{W^{\frac{n}{n+1},n+1}(\R^n)}\, [\phi]_{W^{\frac{2}{n+1},\frac{n+1}{n-1}}(\R^n)}.
\] In particular, by the Stokes theorem for differential forms, and by choosing suitable test forms $\phi$ we have the following estimates for the components:
$$
 \|d(\lambda dg)\|_{\dot{W}^{-\frac{2}{n+1}, \frac{n+1}{2}}(\R^n)} = \|d(\lambda dg)\|_{\Big (\dot{W}^{\frac{2}{n+1},\frac{n+1}{n-1}}(\R^n)\Big )'}  \aleq [\lambda]_{W^{\frac{n}{n+1},n+1}(\R^n)}\, [g]_{W^{\frac{n}{n+1},n+1}(\R^n)}.
$$
\end{lemma}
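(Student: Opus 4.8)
The plan is to run the harmonic--extension argument of \cite{BN11,LS20} --- the same mechanism already used in the proof of \Cref{weak-product} --- but in a fully symmetric \emph{trilinear} form. The point will be that the three Triebel--Lizorkin exponents that arise naturally are \emph{exactly} the ones in the statement, so that no further Sobolev embedding is required (contrast with \Cref{weak-product}, where a genuine embedding $\dot F^{1/2}_{2n,2}\hookrightarrow\dot F^{s}_{n/s,n/s}$ was invoked). For $n=1$ there are no $(-1)$-forms and the inequality is vacuous, so assume $n\ge2$. I would also first reduce to $\lambda,g\in C_c^\infty(\R^n)$: once the claimed bound is known for smooth compactly supported data, the bilinear map $(\lambda,g)\mapsto\int_{\R^n}\lambda\,\dd g\wedge\dd\phi$ is, for each fixed test form $\phi$, continuous on $W^{\frac{n}{n+1},n+1}(\R^n)\times W^{\frac{n}{n+1},n+1}(\R^n)$, hence extends by density, and the estimate passes to the limit.

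For smooth data, let $\lambda^h,g^h,\phi^h$ be the Poisson (harmonic) extensions to $\R^{n+1}_+=\{(x,t):t>0\}$, and let $D$ denote the full $\R^{n+1}$--exterior derivative. Since $D^2=0$ and $D\lambda^h,Dg^h$ are $1$-forms, one has the identity of $(n+1)$-forms $D(D\lambda^h\wedge Dg^h\wedge\phi^h)=D\lambda^h\wedge Dg^h\wedge D\phi^h$ on $\R^{n+1}_+$. All three extensions are smooth up to $\{t=0\}$, decay at spatial infinity, and $D\phi^h$ decays rapidly; applying Stokes' theorem on $\{|x|<R,\ \eps<t<1/\eps\}$ and letting $\eps\to0$, $R\to\infty$ (the pieces $\{|x|=R\}$ and $\{t=1/\eps\}$ contribute nothing in the limit), then pulling the $n$-form $D\lambda^h\wedge Dg^h\wedge\phi^h$ back to $\{t=0\}$ (where the $\dd t$--components drop out and $\lambda^h,g^h,\phi^h$ restrict to $\lambda,g,\phi$), gives
\[
\int_{\R^n}\lambda\,\dd g\wedge\dd\phi=\pm\int_{\R^{n+1}_+}D\lambda^h\wedge Dg^h\wedge D\phi^h ,
\]
where I also used $\int_{\R^n}\dd(\lambda\,\dd g\wedge\phi)=0$ to rewrite the boundary term. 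Consequently $\bigl|\int_{\R^n}\lambda\,\dd g\wedge\dd\phi\bigr|\le\int_{\R^{n+1}_+}|D\lambda^h|\,|Dg^h|\,|D\phi^h|\,\dd t\,\dd x$.

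The final step is the trilinear bound on this bulk integral. I apply H\"older in $t$ with exponents $n+1,\,n+1,\,\tfrac{n+1}{n-1}$ (reciprocals summing to $1$), then H\"older in $x$ with the \emph{same} three exponents (again $\tfrac1{n+1}+\tfrac1{n+1}+\tfrac{n-1}{n+1}=1$), obtaining
\[
\int_{\R^{n+1}_+}|D\lambda^h|\,|Dg^h|\,|D\phi^h|\ \aleq\ \mathcal A(\lambda)\,\mathcal A(g)\,\mathcal B(\phi),
\]
with $\mathcal A(h)=\big(\int_{\R^n}\int_0^\infty|Dh^h|^{n+1}\dd t\,\dd x\big)^{\frac1{n+1}}$ and $\mathcal B(\phi)=\big(\int_{\R^n}\int_0^\infty|D\phi^h|^{\frac{n+1}{n-1}}\dd t\,\dd x\big)^{\frac{n-1}{n+1}}$. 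The characterization $\|h\|_{\dot F^\alpha_{p,q}}\aeq\big(\int_{\R^n}(\int_0^\infty|t^{1-\frac1q-\alpha}Dh^h|^q\dd t)^{p/q}\dd x\big)^{1/p}$ (valid for $0<\alpha<1$) now matches everything up: with $p=q=n+1$ the weight $t^{1-\frac1q-\alpha}$ is trivial precisely when $\alpha=\frac{n}{n+1}$, so $\mathcal A(h)\aeq[h]_{\dot F^{n/(n+1)}_{n+1,n+1}}=[h]_{W^{\frac{n}{n+1},n+1}(\R^n)}$; and with $p=q=\frac{n+1}{n-1}$ it is trivial precisely when $\alpha=\frac{2}{n+1}$, so $\mathcal B(\phi)\aeq[\phi]_{W^{\frac{2}{n+1},\frac{n+1}{n-1}}(\R^n)}$. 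Chaining the three displays gives the asserted inequality. For the ``in particular'' statement, I define the $2$-form $\dd(\lambda\,\dd g)$ distributionally as the functional $\phi\mapsto\pm\int_{\R^n}\lambda\,\dd g\wedge\dd\phi$ on $C_c^\infty(\bigwedge^{n-2}(\R^n))$; the estimate bounds it by $[\lambda]\,[g]$ times $[\phi]_{W^{\frac{2}{n+1},\frac{n+1}{n-1}}(\R^n)}$, hence (since $\tfrac{n+1}{n-1}$ is conjugate to $\tfrac{n+1}{2}$) it lies in $W^{-\frac{2}{n+1},\frac{n+1}{2}}(\R^n)$ with the stated size, and testing against $\phi=\psi\,\dd x^K$ over coordinate $(n-2)$-multi-indices $K$ reads off the individual components.

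I expect the genuinely delicate part to be not analytic but organizational: pinning down the signs and the boundary pullback in the Stokes step, and cleanly checking that the terms at $\{|x|=R\}$ and $\{t=1/\eps\}$ vanish while the $\{t=0\}$ term converges to $\int_{\R^n}\lambda\,\dd g\wedge\dd\phi$ --- this is exactly where reducing to $\lambda,g\in C_c^\infty$ pays off. The trilinear H\"older step and the identification of the power integrals with $W^{s,p}$ seminorms are then automatic, since the exponents of the three slots were engineered to coincide; the only point to verify is $\tfrac{n}{n+1},\tfrac{2}{n+1}\in(0,1)$, so that the first--order harmonic--extension characterization of $\dot F^{\alpha}_{p,p}$ is available (true for every $n\ge2$).
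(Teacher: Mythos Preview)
Your argument is correct and is precisely the harmonic-extension/Stokes approach of \cite{BN11,LS20} that the paper cites for this lemma (and employs elsewhere, e.g.\ in the proofs of \Cref{weak-product} and \Cref{pr:detest}); the paper itself gives no proof here beyond the citation. The trilinear H\"older split with exponents $(n+1,n+1,\tfrac{n+1}{n-1})$ and the identification with the diagonal Triebel--Lizorkin seminorms are exactly as in the source, so there is nothing to add.
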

 
\subsection{A determinant estimate}
\begin{proposition}\label{pr:detest}
 For any $k \in \{0,\ldots,n\}$ and $\phi \in C^\infty_c(\R^n)$,  scalar functions  $a_j \in W^{\frac{n}{n+1},n+1}(\R^n)$ and 1-forms $\beta_j \in W^{\frac{n-1}{n+1},\frac{n+1}{2}} (\R^n, \bigwedge^1 (\R^n))$,
\[ 
\begin{split} 
& \Big | \int_{\R^n}  (da_1\wedge \ldots \wedge da_k \wedge \beta_{k+1} \wedge \ldots \wedge \beta_n) \phi \Big | \\\
 \aleq & \Big (\|\phi\|_{L^\infty} + [\phi]_{W^{\frac{n}{n+1},n+1}} \Big ) \prod_{j=1}^k [a_j]_{W^{\frac{n}{n+1},n+1}} \prod_{j=k+1}^n [\beta_j]_{W^{\frac{n-1}{n+1},\frac{n+1}{2}}}.
\end{split}
 \]
\end{proposition}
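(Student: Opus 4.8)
\textbf{Proof plan for \Cref{pr:detest}.}
The plan is to prove the estimate by downward induction on $k$, starting from $k=0$ (where no $da_j$ factors appear) and building up to $k=n$, which is the distributional Jacobian itself. The key structural observation is that each $da_j$ factor can be peeled off one at a time by integration by parts, converting the wedge product into a form to which \Cref{la:dualspace} applies after we absorb the test function $\phi$ appropriately. Concretely, for fixed $k \ge 1$ I would isolate $da_1$ and write $(da_1 \wedge \Theta)\phi = d\brac{a_1\, \Theta\, \phi} - a_1\, d(\Theta\phi)$ up to signs, where $\Theta := da_2 \wedge \cdots \wedge da_k \wedge \beta_{k+1}\wedge\cdots\wedge\beta_n$; since $\phi$ is compactly supported the exact term integrates to zero by Stokes, leaving $\int a_1\, d(\Theta\phi)$. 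One then expands $d(\Theta\phi) = d\Theta\cdot \phi \pm \Theta\wedge d\phi$ and regroups so that the $a_1$-factor plays the role of $\lambda$ and one of the $\beta_j$ (or the remaining $da_j$'s) is relabeled.

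The main technical point is to feed the regrouped expression into the bilinear/trilinear estimate of \Cref{la:dualspace}: that lemma is stated for a single scalar $\lambda$, a single $dg$, and a test $(n-2)$-form $\phi$, and delivers the product of three seminorms $[\lambda]_{W^{n/(n+1),n+1}}[g]_{W^{n/(n+1),n+1}}[\phi]_{W^{2/(n+1),(n+1)/(n-1)}}$. To match this, I would treat $da_2\wedge\cdots\wedge da_k\wedge \beta_{k+1}\wedge\cdots\wedge\beta_n\wedge (\text{contribution of }d\phi)$ as the ``test form'' and estimate its relevant dual seminorm using the already-established lower-$k$ case of the induction together with the product rule in fractional Sobolev spaces (\Cref{weak-product}, \Cref{weak-product-neg}) — the scalar functions $a_j$ lie in $L^\infty\cap W^{n/(n+1),n+1}$ on the support of $\phi$, so products like $a_j \phi$ and $a_j\, d\phi$ stay in the right spaces with the right seminorm bounds. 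The $1$-forms $\beta_j$ enter only through $[\beta_j]_{W^{(n-1)/(n+1),(n+1)/2}}$, which is precisely the seminorm that pairs by duality against a $W^{2/(n+1),(n+1)/(n-1)}$ object via Hölder in the Triebel–Lizorkin scale, so no loss occurs.

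The hard part will be the bookkeeping of which Sobolev exponent each factor carries after each integration by parts, and verifying that the ``$\|\phi\|_{L^\infty} + [\phi]_{W^{n/(n+1),n+1}}$'' prefactor is exactly what survives: the $L^\infty$ piece is needed because differentiating $\phi$ once (to get $d\phi$) consumes one order of regularity, and one must check that the remaining $\phi$-dependence is controlled by its $W^{n/(n+1),n+1}$ seminorm via the embedding $W^{n/(n+1),n+1}(\R^n)\hookrightarrow C^0(\R^n)$ (since $\frac{n}{n+1}\cdot(n+1) = n$ is the critical exponent). I expect the base case $k=0$ to be essentially Hölder's inequality combined with the embedding $W^{(n-1)/(n+1),(n+1)/2}\hookrightarrow L^{?}$ needed to make the wedge of $n$ one-forms integrable against $\phi$, and the inductive step to be a careful but routine application of the Leibniz rule for $d$, Stokes' theorem, the product estimates of \Cref{weak-product}, and \Cref{la:dualspace}. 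One should also take care, when $\R^n$ is replaced by a bounded smooth domain in applications, that all the forms are compactly supported so the Stokes integrations genuinely have no boundary terms; here, since $\phi\in C_c^\infty(\R^n)$, this is automatic.
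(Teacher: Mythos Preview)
Your route is genuinely different from the paper's, and as written it has a real gap.

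The paper does \emph{not} induct on $k$ or integrate by parts on $\R^n$. Instead it extends everything harmonically to $\R^{n+1}_+$, writes
\[
\int_{\R^n} (da_1\wedge\cdots\wedge da_k\wedge\beta_{k+1}\wedge\cdots\wedge\beta_n)\phi
=\int_{\R^{n+1}_+} d\Big(da_1^h\wedge\cdots\wedge da_k^h\wedge\beta_{k+1}^h\wedge\cdots\wedge\beta_n^h\,\phi^h\Big),
\]
and uses $dd=0$ so that the $(n{+}1)$-dimensional exterior derivative falls only on the $\beta_j^h$'s or on $\phi^h$. Each resulting term is then estimated by a single H\"older inequality on $\R^{n+1}_+$ together with the identification $\|t^{1-\frac1p-s}Df^h\|_{L^p(\R^{n+1}_+)}\aeq [f]_{W^{s,p}(\R^n)}$ and the maximal-function bound $|\phi^h|\aleq \mathcal M\phi$. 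No induction, no appeal to \Cref{la:dualspace} or \Cref{weak-product}, and the two terms in the prefactor $\|\phi\|_{L^\infty}+[\phi]_{W^{n/(n+1),n+1}}$ arise transparently from whether the derivative lands on a $\beta$ or on $\phi$.

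Your plan, by contrast, peels off $da_1$ on $\R^n$ and hopes to reduce to the $(k-1)$-case plus \Cref{la:dualspace}. Two concrete problems:
\begin{itemize}
\item After one integration by parts you face terms like $\int a_1\,(da_2\wedge\cdots\wedge d\beta_\ell\wedge\cdots\wedge\beta_n)\,\phi$. This is \emph{not} an instance of the $(k-1)$-case of the proposition (it contains a $d\beta_\ell$, a distribution in $W^{-2/(n+1),(n+1)/2}$), and it is not of the form $\int\lambda\,dg\wedge d\psi$ covered by \Cref{la:dualspace} either, since that lemma is trilinear with a scalar $\lambda$, a scalar $g$, and an $(n{-}2)$-form $\psi$. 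You would need a separate multilinear estimate to close the induction, which is essentially the proposition you are trying to prove.
\item Your justification for why only $\|\phi\|_{L^\infty}+[\phi]_{W^{n/(n+1),n+1}}$ survives invokes the embedding $W^{n/(n+1),n+1}(\R^n)\hookrightarrow C^0(\R^n)$ ``since $\frac{n}{n+1}(n+1)=n$ is the critical exponent''. This embedding is \emph{false} precisely because the exponent is critical; $W^{n/(n+1),n+1}$ lands in BMO/VMO, not $L^\infty$. So the bookkeeping you outline for the $\phi$-dependence cannot work as stated, and the $\|\phi\|_{L^\infty}$ term is genuinely independent input, exactly as the harmonic-extension proof makes clear.
\end{itemize}

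In short, the harmonic-extension argument handles all $n$ factors simultaneously in one Stokes step on $\R^{n+1}_+$, whereas your one-factor-at-a-time IBP on $\R^n$ generates terms that fall outside both the inductive hypothesis and the available bilinear lemmas.
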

 \begin{remark}
 The proposition is indeed  a determinant estimate:
 \[ 
\begin{split} 
& \Big | \int_{\R^n} \det(\nabla a_1,\ldots,\nabla a_k, B_{k+1},\ldots,B_n) \phi \Big | \\\
 \aleq & \Big (\|\phi\|_{L^\infty} + [\phi]_{W^{\frac{n}{n+1},n+1}} \Big ) \prod_{j=1}^k [a_j]_{W^{\frac{n}{n+1},n+1}} \prod_{j=k+1}^n [B_j]_{W^{\frac{n-1}{n+1},\frac{n+1}{2}}}
\end{split}
 \]  for scalar functions and vector fields of appropriate regularity. 
 \end{remark}
 
\begin{proof}
This can be proven by the tedious arguments in \cite{WY99a} using Littlewood-Paley decomposition and paraproducts. Instead we follow an argument inspired by \cite{BN11}, with the  adaptations from \cite{LS20} (see also \cite{Ing20}). Let $a^h$, $\beta^h$, $\phi^h$ be the harmonic extensions of the corresponding forms or vectors to $\R^{n+1}_+$.
\[
\begin{split}
 &\int_{\R^n} (da_1\wedge \ldots \wedge da_k \wedge \beta_{k+1} \wedge \ldots \wedge \beta_n) \phi\\
 =&\int_{\R^{n+1}_+} d\brac{(da^{h}_1\wedge \ldots \wedge da^{h}_k  \wedge \beta^{h}_{k+1} \wedge \ldots \wedge \beta^{h}_n) \phi^h}.
 \end{split}
\]
Since $dd = 0$ we find 
\begin{equation}\label{det-estimate-split}
\begin{split}
 &\abs{\int_{\R^n} (da_1\wedge \ldots da_k \wedge \beta_{k+1} \wedge \ldots \wedge \beta_n) \phi}\\
 \aleq &\sum_{\ell = k+1}^n\int_{\R^{n+1}_+} |D a^{h}_1| \cdot \ldots \cdot |D a^{h}_k| |\beta^{h}_{k+1}| \ldots |D\beta^{h}_\ell| \ldots |\beta^{h}_n| |\phi^h|\\
 &+ \int_{\R^{n+1}_+} |D a^{h}_1| \cdot \ldots \cdot |D a^{h}_k| |\beta^{h}_{k+1}| \ldots |\beta^{h}_n| |D\phi^h|\\
 \end{split}
\end{equation}

Recall that  for the Hardy-Littlewood maximal function $\mathcal{M}$
\begin{equation}\label{maximal}
 |f^h(x,t)| \aleq \mathcal{M}f(x),
\end{equation}
and for $s \in (0,1)$,
\[
[f]_{W^{s,p}} \aeq \brac{\int_{\R^n} \brac{\int_0^\infty |t^{1-\frac{1}{p}-s} D f^h|^p dt} dx}^{\frac{1}{p}}.
\] 
See, e.g., \cite{LS20,Ing20}.
Therefore from H\"older inequality and Sobolev embeddings we obtain for the first terms in \eqref{det-estimate-split}:
\[
\begin{split}
 &\int_{\R^{n+1}_+} |D a^{h}_1| \cdot \ldots \cdot |D a^{h}_k|  |\beta^{h}_{k+1}| \ldots |D\beta^{h}_\ell| \ldots  |\beta^{h}_n| |\phi^h|\\\
 \aleq & \|\mathcal{M} \phi\|_{L^\infty}\Big (\prod_{l=1}^k \|D a^{h}_l\|_{L^{n+1}(\R^{n+1}_+)}\Big)  \|\beta^{h}_{k+1}\|_{L^{n+1}(\R^{n+1}_+)} \ldots 
 \|D\beta^{h}_l\|_{L^{\frac{n+1}{2}}(\R^{n+1}_+)} \ldots \|\beta^{h}_n\|_{L^{n+1}(\R^{n+1}_+)} \\\
 \aleq& \|\mathcal{M} \phi\|_{L^\infty}  (\prod_{l=1}^k [a^{h}_l]_{W^{1,n+1} }\Big)  [\beta^{h}_{k+1}]_{W^{1,\frac{n+1}{2}}} \ldots [\beta^{h}_l]_{W^{1,\frac{n+1}{2}}} \ldots [\beta^{h}_n]_{W^{1,\frac{n+1}{2}}} \\\
 \aleq & \|\mathcal{M} \phi\|_{L^\infty} \Big ( \prod_{l=1}^k [a_l]_{W^{\frac{n}{n+1},n+1}} \Big ) \prod_{l=k+1}^n [\beta_{l}]_{W^{\frac{n-1}{n+1},\frac{n+1}{2}}} 
 \end{split}
\] which is bounded as required in view of \eqref{maximal}.  The last term in \eqref{det-estimate-split} is estimated in the  same manner through a H\"older estimate:
\[
\begin{split}& \int_{\R^{n+1}_+} |D a^{h}_1| \cdot \ldots \cdot |D a^{h}_k| |\beta^{h}_{k+1}| \ldots |\beta^{h}_n| |D\phi^h|\\\
\aleq &    \|D \phi^h\|_{L^{n+1}(\R^{n+1}_+)} \prod_{l=1}^k \|D a^{h}_l\|_{L^{n+1}(\R^{n+1}_+)}  \prod_{l=k+1}^n \|\beta^{h}_l\|_{L^{n+1}(\R^{n+1}_+)}
 \\\
\aleq &  [\phi]_{W^{\frac{n}{n+1},n+1}} \prod_{l=1}^k [a_l]_{W^{\frac{n}{n+1},n+1}} \prod_{l=k+1}^n [\beta_l]_{W^{\frac{n-1}{n+1},\frac{n+1}{2}}}. \qedhere
\end{split}
\]\end{proof}

\subsection{Hodge decomposition}
\begin{proposition}\label{pr:hodge}
Assume that $\lambda \in W^{\frac{n}{n+1},n+1} \cap L^\infty(\R^n)$ and $g \in W^{\frac{n}{n+1},n+1}(\R^n;\R^n)$. Then we can decompose
\[
 \lambda dg = da + \beta,
\]
such that 
\[
[a]_{W^{\frac{n}{n+1},n+1}(\R^n)} \aleq \brac{\|\lambda\|_{L^\infty} + [\lambda]_{W^{\frac{n}{n+1},n+1}(\R^n)}} [g]_{W^{\frac{n}{n+1},n+1}(\R^n)},
\]
\[
[\beta]_{W^{\frac{n-1}{n+1},\frac{n+1}{2}}(\R^n)} \aleq [\lambda]_{W^{\frac{n}{n+1},n+1}(\R^n)} [g]_{W^{\frac{n}{n+1},n+1}(\R^n)}.
\]
\end{proposition}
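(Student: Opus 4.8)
The plan is to obtain the decomposition as the Helmholtz--Hodge splitting of the $1$-form $\lambda\, dg$ on $\R^n$ and to read off the two estimates from the Calder\'on--Zygmund bounds recorded in \Cref{frac}. Since all hypotheses and conclusions are componentwise in $g$, I may assume $g$ is scalar, so $g\in W^{\frac{n}{n+1},n+1}(\R^n)$ and $\lambda\in W^{\frac{n}{n+1},n+1}\cap L^\infty(\R^n)$, both compactly supported, and write $\omega:=\lambda\,dg=\sum_{\alpha=1}^n(\lambda\,\partial_\alpha g)\,dx^\alpha$. By \Cref{weak-product}(i) with $s=\tfrac{n}{n+1}$ (admissible since $\tfrac12<\tfrac{n}{n+1}<1$ for $n\ge2$, and then $\tfrac ns=n+1$), each $\lambda\,\partial_\alpha g$ is a well-defined distribution supported in $\supp\lambda$, so $\omega$ is a compactly supported distributional $1$-form with
\[
 [\omega]_{W^{-\frac{1}{n+1},\,n+1}(\R^n)}\;\aleq\;\big(\|\lambda\|_{L^\infty}+[\lambda]_{W^{\frac{n}{n+1},n+1}(\R^n)}\big)\,[g]_{W^{\frac{n}{n+1},n+1}(\R^n)}.
\]
I then set $a:=-\lap^{-1}\div(\lambda\,\na g)$ and $\beta:=\omega-da$, the classical Helmholtz--Hodge splitting. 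Using the flat-space identities $d\delta+\delta d=\lap$ on $1$-forms (acting coefficientwise), $\delta$-on-a-$1$-form $=-\div$, and $d^2=\delta^2=0$, one checks that $a=\lap^{-1}\delta\omega$, hence $\beta=\lap^{-1}\delta\,d\omega$; in particular $\beta$ is co-closed and $d\beta=d\omega=d\lambda\wedge dg$. This already yields the decomposition $\lambda\,dg=da+\beta$.

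For the bound on $a$ I would use $\partial_i a=-\sum_\alpha(\partial_i\partial_\alpha\lap^{-1})(\lambda\,\partial_\alpha g)=-\sum_\alpha\Rz_i\Rz_\alpha(\lambda\,\partial_\alpha g)$, since $\partial_i\partial_\alpha\lap^{-1}=\Rz_i\Rz_\alpha$. Replacing, via \eqref{neg-frac}, the $W^{\frac{n}{n+1},n+1}$ seminorm of $a$ by the $W^{-\frac1{n+1},n+1}$ seminorm of $\na a$, then invoking the boundedness of the Riesz transform, hence of $\Rz_i\Rz_\alpha$, on the homogeneous space $W^{\sigma,p}(\R^n)$ for $-1<\sigma<0$ and $1<p<\infty$ (recorded in \Cref{frac}; here $\sigma=-\tfrac1{n+1}$, $p=n+1$), and the product estimate above, I obtain
\[
 [a]_{W^{\frac{n}{n+1},n+1}(\R^n)}\;\aeq\;[\na a]_{W^{-\frac1{n+1},n+1}(\R^n)}\;\aleq\;\big(\|\lambda\|_{L^\infty}+[\lambda]_{W^{\frac{n}{n+1},n+1}(\R^n)}\big)\,[g]_{W^{\frac{n}{n+1},n+1}(\R^n)},
\]
which is the first of the two estimates.

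For $\beta$ I would again use \eqref{neg-frac}: $[\beta]_{W^{\frac{n-1}{n+1},\frac{n+1}{2}}(\R^n)}\aeq[\na\beta]_{W^{-\frac2{n+1},\frac{n+1}{2}}(\R^n)}$, noting $1-\tfrac{2}{n+1}=\tfrac{n-1}{n+1}$ and that $\sigma=-\tfrac2{n+1}\in(-1,0)$, $p=\tfrac{n+1}{2}\in(1,\infty)$ are admissible for $n\ge2$. From $\beta=\lap^{-1}\delta\,d\omega$ one reads that the entries of $\na\beta$ are, up to signs and relabelling, $\Rz_i\Rz_k$ applied to the scalar components of the $2$-form $d\omega$; boundedness of $\Rz_i\Rz_k$ on $W^{-\frac2{n+1},\frac{n+1}{2}}(\R^n)$ gives $[\na\beta]_{W^{-\frac2{n+1},\frac{n+1}{2}}}\aleq[d\omega]_{W^{-\frac2{n+1},\frac{n+1}{2}}}$, and \Cref{la:dualspace} applied to $d\omega=d(\lambda\,dg)=d\lambda\wedge dg$ yields $[d\omega]_{W^{-\frac2{n+1},\frac{n+1}{2}}}\aleq[\lambda]_{W^{\frac{n}{n+1},n+1}}[g]_{W^{\frac{n}{n+1},n+1}}$. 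Combining these gives $[\beta]_{W^{\frac{n-1}{n+1},\frac{n+1}{2}}(\R^n)}\aleq[\lambda]_{W^{\frac{n}{n+1},n+1}(\R^n)}[g]_{W^{\frac{n}{n+1},n+1}(\R^n)}$, the second estimate. Note that the gain in smoothness of $\beta$ over $\omega$ comes precisely from the div--curl null structure of $d\lambda\wedge dg$ captured by \Cref{la:dualspace}.

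The one point that needs care, rather than a genuine obstacle, is justifying all of the above at the low regularity $W^{-\frac1{n+1},n+1}$: that the products $\lambda\,\partial_\alpha g$ and $d\lambda\wedge dg$, and the operators $\lap^{-1}\delta$ and $\Rz_i\Rz_k$, act as claimed on these distributions, and that the Hodge identity $\mathrm{id}=\lap^{-1}(d\delta+\delta d)$ on $1$-forms is valid here. This is exactly what \Cref{weak-product}(i) and \Cref{la:dualspace} supply, with $\lap^{-1}$ fixed as in \Cref{frac}. If one prefers to avoid the distributional bookkeeping, every operator in play is a bounded linear map between the relevant fractional Sobolev spaces, so it suffices to carry out the construction for $\lambda,g\in C_c^\infty(\R^n)$ and pass to the limit, the $L^\infty$-bound on $\lambda$ being retained in the approximation.
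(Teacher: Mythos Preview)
Your argument is correct and is essentially the same as the paper's: you perform the Hodge--Helmholtz splitting $a=\lap^{-1}d^\ast(\lambda\,dg)$, $\beta=\lap^{-1}d^\ast d(\lambda\,dg)$ and estimate each piece via the Riesz-transform/\eqref{neg-frac} bounds together with \Cref{weak-product}(i) and \Cref{la:dualspace}, which is exactly what the paper does (there with the notation $\omega:=\lap^{-1}(\lambda\,dg)$, $a=d^\ast\omega$, $\beta=d^\ast d\omega$, and the combined estimate \eqref{grad-lap} in place of your separate use of \eqref{neg-frac} and Riesz boundedness). The only cosmetic difference is that the paper introduces the potential $\omega$ first, whereas you apply $\lap^{-1}$ directly to the pieces; the resulting $a,\beta$ and the estimates are identical.
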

\begin{proof}
 
 On $\R^n$ we let $\omega:= \lap^{-1} (\lambda dg)$. Hence
\[
 \lap \omega \equiv (dd^\ast + d^\ast d) \omega = \lambda dg.
\]
Set $a := d^\ast \omega$ and $\beta := d^\ast d \omega$. Observe that 
\[
 \lap d \omega = d \lap \omega = d(\lambda dg);
\]
that is,
\[
\beta = d^\ast d\omega = d^\ast \lap^{-1} \brac{d(\lambda dg)}.
\]
Therefore in view of  a component-wise application of \eqref{grad-lap} and \Cref{la:dualspace} we have
\[
\begin{split}
[\beta]_{W^{\frac{n-1}{n+1},\frac{n+1}{2}}(\R^n)} \aleq\,
& \|d (\lambda dg)\|_{\dot{W}^{\frac{n-1}{n+1}-1,\frac{n+1}{2}}(\R^n)}\\
=\,&\|d (\lambda dg)\|_{\dot{W}^{-\frac{2}{n+1},\frac{n+1}{2}}(\R^n)}\\
\aleq\,&[\lambda]_{W^{\frac{n}{n+1},n+1}(\R^n)}\, [g]_{W^{\frac{n}{n+1},n+1}(\R^n)}.
\end{split}
\]
Moreover,
\[
 \lap a = d^\ast \lap \omega = d^\ast (\lambda dg), 
\]
so again
\[
 a = \lap^{-1}d^\ast (\lambda dg).
\]
Using \eqref{grad-lap} as before and \Cref{weak-product}, we obtain as claimed
\[
 [a]_{W^{\frac{n}{n+1},n+1}(\R^n)} \aleq \|\lambda dg\|_{\dot{W}^{\frac{n}{n+1}-1,n+1}(\R^n)} \aleq \brac{\|\lambda\|_{L^\infty} + [\lambda]_{W^{\frac{n}{n+1},n+1}(\R^n)}} [g]_{W^{\frac{n}{n+1},n+1}(\R^n)}. \qedhere
\]
\end{proof}

\begin{proposition}\label{pr:hodgediff}
Assume that $\lambda \in W^{\frac{n}{n+1},n+1} \cap L^\infty(\R^n)$ and $f \in W^{\frac{n}{n+1},n+1}(\R^n;\R^n)$. Then we can decompose
\[
 \lambda_\eps df_\eps - (\lambda df)_\eps = da^\eps + \beta^\eps
\]
such that 
\[
\lim_{\eps \to 0}\,\,  [a^\eps]_{W^{\frac{n}{n+1},n+1}(\R^n)}  =0,
\]
\[
\lim_{\eps \to 0} \,\, [\beta^\eps]_{W^{\frac{n-1}{n+1},\frac{n+1}{2}}(\R^n)}  =0.
\]
\end{proposition}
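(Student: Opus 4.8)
The proof follows the scheme of \Cref{pr:hodge}, now applied to the commutator $1$-form (understood componentwise in the target),
\[
 h^\eps := \lambda_\eps df_\eps - (\lambda df)_\eps,
\]
which is smooth and compactly supported. Setting $\omega^\eps := \lap^{-1}h^\eps$, $a^\eps := d^\ast \omega^\eps$, and $\beta^\eps := d^\ast d\omega^\eps$, the identity $\lap = dd^\ast + d^\ast d$ gives $h^\eps = da^\eps + \beta^\eps$, and moreover $a^\eps = \lap^{-1}d^\ast h^\eps$, $\beta^\eps = d^\ast \lap^{-1}dh^\eps$. Exactly as in the proof of \Cref{pr:hodge}, a componentwise use of \eqref{grad-lap} yields
\[
[a^\eps]_{W^{\frac{n}{n+1},n+1}(\R^n)} \aleq [h^\eps]_{W^{-\frac{1}{n+1},n+1}(\R^n)}, \qquad [\beta^\eps]_{W^{\frac{n-1}{n+1},\frac{n+1}{2}}(\R^n)} \aleq [dh^\eps]_{W^{-\frac{2}{n+1},\frac{n+1}{2}}(\R^n)},
\]
so it suffices to prove that the two right-hand sides vanish as $\eps\to 0$. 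We shall freely use $\|\lambda_\eps\|_{L^\infty}\le\|\lambda\|_{L^\infty}$, $[\lambda_\eps]_{W^{\frac{n}{n+1},n+1}}\le[\lambda]_{W^{\frac{n}{n+1},n+1}}$, and the convergences $\lambda_\eps\to\lambda$, $f_\eps\to f$ in $W^{\frac{n}{n+1},n+1}(\R^n)$.

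The difficulty is that both factors of $h^\eps$ are regularised at once, so neither \Cref{weak-product}(ii) (which needs a fixed second factor) nor a naive bilinear estimate applies directly. Applying \Cref{weak-product} with $s=\tfrac{n}{n+1}$, all the products below are well defined, and we decompose
\[
 h^\eps = (\lambda_\eps - \lambda)\, d(f_\eps - f) \;+\; (\lambda_\eps - \lambda)\, df \;+\; \lambda\, d(f_\eps - f) \;+\; \big(\lambda\, df - (\lambda df)_\eps\big).
\]
By \Cref{weak-product}(ii), applied with the fixed map $f$ and $\mu_\eps := \lambda_\eps - \lambda$ (for which $\sup_\eps\|\mu_\eps\|_{L^\infty}\le 2\|\lambda\|_{L^\infty}$ and $[\mu_\eps]_{W^{\frac{n}{n+1},n+1}}\to 0$), the second summand tends to $0$ in $W^{-\frac{1}{n+1},n+1}$. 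By \Cref{weak-product}(i) the third summand has $W^{-\frac{1}{n+1},n+1}$-seminorm $\aleq [f_\eps - f]_{W^{\frac{n}{n+1},n+1}}\big(\|\lambda\|_{L^\infty}+[\lambda]_{W^{\frac{n}{n+1},n+1}}\big)\to 0$. The fourth summand is the mollification error of the fixed distribution $\lambda\, df \in W^{-\frac{1}{n+1},n+1}$ (well defined by \Cref{weak-product}), hence tends to $0$ by the standard convergence of mollifications in $W^{-\frac{1}{n+1},n+1}(\R^n)$. Finally, the first, genuinely quadratic summand has by \Cref{weak-product}(i) seminorm
\[
 \aleq [f_\eps - f]_{W^{\frac{n}{n+1},n+1}}\,\big(\|\lambda_\eps - \lambda\|_{L^\infty}+[\lambda_\eps - \lambda]_{W^{\frac{n}{n+1},n+1}}\big);
\]
here $\|\lambda_\eps - \lambda\|_{L^\infty}$ need not vanish but stays bounded by $2\|\lambda\|_{L^\infty}$, $[\lambda_\eps - \lambda]_{W^{\frac{n}{n+1},n+1}}$ is bounded, and $[f_\eps - f]_{W^{\frac{n}{n+1},n+1}}\to 0$, so the product vanishes. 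Hence $[h^\eps]_{W^{-\frac{1}{n+1},n+1}}\to 0$.

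For the second limit, since $d$ commutes with mollification we have, componentwise in the target index $i$, $dh^{\eps} = d(\lambda_\eps\, df^i_\eps) - \big(d(\lambda\, df^i)\big)_\eps$, and we split
\[
 d(\lambda_\eps\, df^i_\eps) - \big(d(\lambda\, df^i)\big)_\eps = d\big(\lambda_\eps\,d(f^i_\eps - f^i)\big) + d\big((\lambda_\eps - \lambda)\,df^i\big) + \Big(d(\lambda\,df^i) - \big(d(\lambda\,df^i)\big)_\eps\Big).
\]
By \Cref{la:dualspace} the first two summands have $W^{-\frac{2}{n+1},\frac{n+1}{2}}$-seminorm $\aleq [\lambda_\eps]_{W^{\frac{n}{n+1},n+1}}[f^i_\eps - f^i]_{W^{\frac{n}{n+1},n+1}} \le [\lambda]_{W^{\frac{n}{n+1},n+1}}[f^i_\eps - f^i]_{W^{\frac{n}{n+1},n+1}}$ and $\aleq [\lambda_\eps - \lambda]_{W^{\frac{n}{n+1},n+1}}[f^i]_{W^{\frac{n}{n+1},n+1}}$ respectively, both of which tend to $0$; the third summand is the mollification error of $d(\lambda\,df^i)\in W^{-\frac{2}{n+1},\frac{n+1}{2}}$ (finite by \Cref{la:dualspace}) and also tends to $0$. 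Therefore $[dh^\eps]_{W^{-\frac{2}{n+1},\frac{n+1}{2}}}\to 0$, which completes the proof. The main obstacle, the simultaneous regularisation of both factors, is thus handled by isolating the fixed-second-factor term (treated by \Cref{weak-product}(ii)) and the purely quadratic error $(\lambda_\eps-\lambda)\,d(f_\eps-f)$, which survives only because the non-vanishing factor $\lambda_\eps-\lambda$ enters through its bounded $L^\infty$-norm against the strongly convergent factor $f_\eps-f$.
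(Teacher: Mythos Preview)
Your proof is correct and follows essentially the same route as the paper: the same Hodge decomposition via $\omega^\eps=\lap^{-1}h^\eps$, the same reduction to $[h^\eps]_{W^{-\frac{1}{n+1},n+1}}\to 0$ and $[dh^\eps]_{W^{-\frac{2}{n+1},\frac{n+1}{2}}}\to 0$, and the same tools (\Cref{weak-product} and \Cref{la:dualspace}) to handle the pieces. The only cosmetic difference is that for $h^\eps$ you expand into four summands whereas the paper keeps $\lambda_\eps\,d(f_\eps-f)$ as a single term (estimated directly by \Cref{weak-product}(i) using the uniform bound on $\|\lambda_\eps\|_{L^\infty}+[\lambda_\eps]_{W^{\frac{n}{n+1},n+1}}$); your extra splitting into $(\lambda_\eps-\lambda)\,d(f_\eps-f)+\lambda\,d(f_\eps-f)$ is harmless but unnecessary.
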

\begin{proof}
Our arguments are similar to those for \Cref{pr:hodge}. First we consider
\[
\begin{split}
 &\left\|d \big(\lambda_\eps df_\eps - (\lambda df)_\eps\big)\right\|_{\dot{W}^{-\frac{2}{n+1}, \frac{n+1}{2}}(\R^n)}\\
&\leq \left\|d \big(\lambda_\eps df_\eps - \lambda_\eps df\big)\right\|_{\dot{W}^{-\frac{2}{n+1}, \frac{n+1}{2}}(\R^n)} + \left\|d\big(\lambda_\eps df-\lambda df\big)\right\|_{\dot{W}^{-\frac{2}{n+1}, \frac{n+1}{2}}(\R^n)}\\
&+\left\|d\big( \lambda df- (\lambda df)_\e\big)\right\|_{\dot{W}^{-\frac{2}{n+1}, \frac{n+1}{2}}(\R^n)}\\
& =: {\rm I}_\e + {\rm II}_\e + {\rm III}_\e.\\
\end{split}
\]
In view of \Cref{la:dualspace}, we find that 
\[
\begin{split}
{\rm I}_\e + {\rm II}_\e 
&= \left\|d \big(\lambda_\eps d(f_\eps-f)\big)\right\|_{\dot{W}^{-\frac{2}{n+1}, \frac{n+1}{2}}(\R^n)} + \left\|d\big((\lambda_\eps-\lambda) df\big)\right\|_{\dot{W}^{-\frac{2}{n+1}, \frac{n+1}{2}}(\R^n)}\\
&\aleq\,[\lambda_\eps]_{W^{\frac{n}{n+1},n+1}(\R^n)} [f_\eps-f]_{W^{\frac{n}{n+1},n+1}(\R^n)}
 +[\lambda_\eps-\lambda]_{W^{\frac{n}{n+1},n+1}(\R^n)} [f]_{W^{\frac{n}{n+1},n+1}(\R^n)}\\
& \xrightarrow{\eps \to 0}\, 0.
 \end{split}
\]
In addition, we use \Cref{la:dualspace} once again to deduce that $d(\lambda df) \in \dot{W}^{-\frac{2}{n+1}, \frac{n+1}{2}}(\R^n)$. Thus the convolution converges:
\begin{align*}
{\rm III}_\e= \left\|d\big(\lambda df\big)-\Big(d\big(\lambda df\big)\Big)_\eps\right\|_{\dot{W}^{-\frac{2}{n+1}, \frac{n+1}{2}}(\R^n)} \xrightarrow{\eps \to 0} \, 0.
\end{align*}
Putting together the convergence results for ${\rm I}_\e$,  ${\rm II}_\e$, and ${\rm III}_\e$, we arrive at
\begin{equation}\label{eq:hodge:24335}
 \lim_{\eps \to 0} \left\|d \Big(\lambda_\eps df_\eps - \big(\lambda df\big)_\eps\Big)\right\|_{\dot{W}^{-\frac{2}{n+1}, \frac{n+1}{2}}(\R^n)}= 0.
\end{equation}

Now we proceed as in \Cref{pr:hodge}. We first solve on $\R^n$:
\[
 \lap \omega^\e \equiv (dd^\ast + d^\ast d) \omega^\e = \lambda_\eps df_\eps - (\lambda df)_\eps,
\]
and then set $a^\e := d^\ast \omega^\eps$ and $\beta^\eps := d^\ast d \omega^\eps$. Observe that 
\[
 \lap d \omega^\eps = d \lap \omega^\eps = d(\lambda_\eps df_\eps - (\lambda df)_\eps).
\]
That is,\
\[
\beta^\eps = d^\ast d\omega^\eps = d^\ast \lap^{-1} \brac{d\big(\lambda_\eps df_\eps - (\lambda df)_\eps\big)}.
\]
 So, with \eqref{grad-lap} and \eqref{eq:hodge:24335} we find that 
\[
\begin{split}
[\beta^\eps]_{W^{\frac{n-1}{n+1},\frac{n+1}{2}}(\R^n)} \aleq\,& \left\|d(\lambda_\eps df_\eps - (\lambda df)_\eps)\right\|_{\dot{W}^{\frac{n-1}{n+1}-1,\frac{n+1}{2}}(\R^n)}\\
=\,&\left\|d \big(\lambda_\eps df_\eps - (\lambda df\big)_\eps)\right\|_{\dot{W}^{-\frac{2}{n+1}, \frac{n+1}{2}}(\R^n)}\\
&\xrightarrow{\eps \to 0}\,0.
\end{split}
\]

Moreover, we have
\[
 \lap a^\eps = d^\ast \lap \omega^\eps = d^\ast (\lambda_\eps df_\eps - (\lambda df)_\eps), 
\]
so
\[
 a^\eps = \lap^{-1}d^\ast (\lambda_\eps df_\eps - (\lambda df)_\eps).
\]
Once again \eqref{grad-lap}  yields
\[
\begin{split}
 [a^\eps]_{W^{\frac{n}{n+1},n+1}(\R^n)} \aleq\,& \|\lambda_\eps df_\eps - (\lambda df)_\eps\|_{\dot{W}^{\frac{n}{n+1}-1,n+1}(\R^n)}\\
 \aleq\,& \|\lambda_\eps df_\eps- \lambda df\|_{\dot{W}^{\frac{n}{n+1}-1,n+1}(\R^n)} + \|(\lambda df)_\eps - \lambda df\|_{\dot{W}^{\frac{n}{n+1}-1,n+1}(\R^n)}.\\
 \end{split}
\]

We will use \Cref{weak-product} repeatedly throughout the rest of the proof. Observe that $\lambda df \in \dot{W}^{\frac{n}{n+1}-1,n+1}(\R^n)$, so 
\[
 \|(\lambda df)_\eps - \lambda df\|_{\dot{W}^{\frac{n}{n+1}-1,n+1}(\R^n)} \xrightarrow{\eps \to 0} 0.
\]
On the other hand,
\[
 \|\lambda_\eps df_\eps- \lambda df\|_{\dot{W}^{\frac{n}{n+1}-1,n+1}(\R^n)} \leq \|(\lambda_\eps -\lambda)df]\|_{\dot{W}^{\frac{n}{n+1}-1,n+1}(\R^n)} + \|\lambda_\eps d(f_\eps-f)\|_{\dot{W}^{\frac{n}{n+1}-1,n+1}(\R^n)}.
\]
The former term tends to zero as $\e \to 0$. For the latter term, we have
\[
 \|\lambda_\eps d(f_\eps-f)\|_{\dot{W}^{\frac{n}{n+1}-1,n+1}(\R^n)} \aleq \brac{\|\lambda_\eps\|_{L^\infty} + [\lambda_\e]_{W^{\frac{n}{n+1},n+1}(\R^n)}} [f_\eps-f]_{W^{\frac{n}{n+1},n+1}(\R^n)},
\]
which again tends to zero.  \end{proof}

\subsection{Proof of \Cref{th:lambdajac}}
\begin{proof} 
Fix $\phi \in C_c^\infty(\Omega)$. We want to show that
\[
 \Jac(f)[\phi]-\Jac(g)[\lambda^n \phi] = 0.
\]
  
We first boundedly extend $g, \lambda$ on the whole $\R^n$, keeping the same names for convenience. We define $ F:=  \lambda \nabla g$ as a distribution in $\R^n$, which is well-defined by \Cref{weak-product}. Note that  for all  $\eta \in C_c^\infty(\Omega)$, extending $\eta$ by $0$ outside $\Omega$ to $\tilde \eta$,  we obtain by \eqref{eq:distnablaflnablag} in view of \eqref{loc-prod}:  
$$
F[\tilde \eta] =  \nabla f[\eta].
$$  Fix  an open set $\TO \Subset \Omega$  containing $\supp \phi$. For $\e$ small enough, $F_\e := F\ast \varphi_\e$ coincides with $\nabla f_\e$ on 
$\TO$ and hence applying \Cref{la:distjac} we have    
$$
\Jac (f)[\phi] = \lim_{\e\to 0} \int_{\Omega} \det (\nabla f_\e) \phi = \lim_{\e\to 0} \int_{\R^n} \det (F_\e) \phi, 
$$ where $\phi$ is extended by $0$ outside $\Omega$ to $\R^n$. Also, mollifying $g$ and $\lambda$  and once again applying  \Cref{la:distjac} we obtain
$$ \Jac (g)[\lambda^n \phi]   =  \lim_{\e \to 0}  \int_{\Omega}  \det (\nabla g_\e)  \lambda_\e ^n  \phi = \lim_{\e \to 0}  \int_{\R^n}  \det (\nabla g_\e)  \lambda_\e ^n  \phi,
$$   since  $\lambda^n_\e \phi \to \lambda^n \phi$ in $W_{00}^{(1-s)n, \frac {1}{1-s}} (\Omega)$.   Therefore we have
\begin{align*}
&\Jac(f)[\phi]-\Jac(g)[\lambda^n \phi]\\
&\quad =  \lim_{\eps \to \infty} \int_{\R^n} \Big ( \det(F_\eps)-\det(\lambda_\eps \na g_\eps)\Big ) \phi 
=  \lim_{\eps \to \infty} \int_{\R^n} \Big(\det((\lambda \na g)_\eps)-\det(\lambda_\eps \na g_\eps)\Big)  \phi\\
&\quad = \sum_{j=1}^n \int_{\R^n}  \left ( \lambda_\e d g^1_\e \wedge \cdots \wedge \lambda_\e d g_\e^{j-1} \wedge \Big[ (\lambda d g^j)_\e - \lambda_\e d g^j_\e \Big] \wedge (\lambda d g^{j+1})_\e \wedge \cdots \wedge (\lambda d g^{n})_\e \right)  \phi.
\end{align*}
In view of the Sobolev embedding
$$
W^{s,\frac sn}(\R^n) \hookrightarrow W^{\frac {n}{n+1}, n+1} (\R^n),
$$ for $s\ge \frac {n}{n+1}$, and the fact that the distributional identity in the bigger space implies the one in the smaller space, we can assume that $s= \frac {n}{n+1}$. For each entry of the form $(\lambda d g^i)_\eps$ and $\lambda_\eps d g^i_\eps$, we shall apply Hodge decomposition as in \Cref{pr:hodge}. To the difference term $(\lambda d g^j)_\eps-\lambda_\eps d g^j_\eps$ we apply Hodge decomposition as in \Cref{pr:hodgediff}. We then obtain terms of the form:
\[
 \int_{\R^n} \left (da^\eps_1\wedge \ldots \wedge da^\eps_k \wedge \beta^\eps_{k+1} \wedge \ldots \wedge \beta^\eps_n\right) \phi,
\]
where each $a^\eps_j$ and $\beta^\eps_j$ is bounded in its corresponding semi-norm. Note that,  fixing $\e$, the estimates in  \Cref{pr:detest} are still valid for the above integral since  by construction we can approximate  each  $a^\eps_j$ (resp.\@ $\beta^\eps_j$) in its semi-norm   by a sequence of scalar functions in $W^{\frac{n}{n+1},n+1}(\R^n)$  (resp.\@ 1-forms  in $W^{\frac{n-1}{n+1},\frac{n+1}{2}} (\R^n, \bigwedge^1 (\R^n))$.  Therefore, to conclude, we use \Cref{pr:detest}: one of the term $a^\eps_j$ or one of $\beta^\eps_j$ converges to zero (since it comes from the difference term), in the corresponding norm, thanks to \Cref{pr:hodgediff}, while the other terms are bounded by \Cref{pr:hodge}. So we obtain the claim by taking $\eps \to 0$.  \end{proof}

\appendix
\renewcommand{\thesection}{\Roman{section}} 
\section{A proof of Proposition~\ref{Jac(f)=0} for \texorpdfstring{$s>2/3$}{s>2/3}}\label{weaker-proof}

As a tangential note, in this section we will   sketch how a slightly weaker statement than \Cref{Jac(f)=0} can be obtained without using \Cref{th:lambdajac}. This hence provides another proof of  \Cref{thm-develop}, but only for $s>2/3$. Hereby, we would  like to highlight the importance of  \Cref{th:lambdajac} in completing our proof for the critical case $s=2/3$.

We begin first by the following observation. As a corollary of the gained regularity $u\in c^{0, \frac s2}$ in \Cref{compnt-develop}, we  can improve some of the estimates  of previous sections and prove: 

 \begin{proposition}\label{better-convergences}
 Let $\Omega \subset \R^2$ be a bounded smooth domain, $\frac 23\le s<1$ and $u\in  I^{1+s, \frac2s} (\Omega, \R^3)$. Let $\theta \in [0,1]$.  For all $\TO \Subset \Omega$
 
 \begin{itemize}
 \smallskip
\item[{\rm (i)}] $\|\two^\e\|_{L^{\frac {2}{s\theta}}(\TO)} \le o(\e^{\frac s2 (1+ \theta)-1})$.  \smallskip
\item[{\rm (ii)}] $\|\G^\e\|_{L^{\frac 1{s\theta}}(\TO)} \le o(\e^{ s (1+ \theta)-1})$.   \end{itemize}
 \end{proposition}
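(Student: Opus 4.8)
The plan is to leverage the regularity $u\in c^{1,\frac s2}_{loc}(\Omega)$ just obtained in \Cref{compnt-develop}: it upgrades the mollification estimates of \Cref{sec-mol} to $L^\infty$ bounds, which can then be interpolated against the $L^{\frac2s}$ and $L^{\frac1s}$ estimates recorded in \Cref{iso-mol-immersion} and \Cref{moli-est}. Fix $\TO\Subset\TO'\Subset\Omega$ with $u\in I^{1+s,\frac2s}(\TO',\R^3)$, so that $\nabla u\in c^{0,\frac s2}(\TO',\R^{3\times2})$; let $\omega$ be a modulus of continuity of $\nabla u$ on $\TO'$ with $\omega(\rho)\to0$. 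For $\e$ small the standard little-H\"older mollification bounds give on $\TO$
\[
\|\nabla u_\e-\nabla u\|_{L^\infty(\TO)}=o(\e^{\frac s2}),\qquad \|\nabla^2 u_\e\|_{L^\infty(\TO)}=o(\e^{\frac s2-1}),
\]
the second since $\int_{\R^2}\nabla\varphi_\e=0$ forces $\nabla^2 u_\e(y)=\int_{\R^2}\big(\nabla u(y-x)-\nabla u(y)\big)\nabla\varphi_\e(x)\,dx$, whose integrand is $O\big(\omega(\e)|x|^{\frac s2}|\nabla\varphi_\e(x)|\big)$.

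The next step is to upgrade the $L^\infty$ control of $\nabla\g^\e$ by exploiting the commutator structure of the pull-back metric. Since $(\nabla u)^T\nabla u=\eucl$ a.e., we have $\big((\nabla u)^T\nabla u\big)_\e\equiv\eucl$ on $\TO$ for small $\e$, hence entrywise $\g^\e-\eucl$ is a commutator $f_\e g_\e-(fg)_\e$ with $f,g$ ranging over the components of $\nabla u$. Differentiating,
\[
\nabla\big(f_\e g_\e-(fg)_\e\big)=(f_\e-f)\nabla g_\e+(g_\e-g)\nabla f_\e-\int_{\R^2}\big(f(y-x)-f(y)\big)\big(g(y-x)-g(y)\big)\nabla\varphi_\e(x)\,dx,
\]
and bounding each term in $L^\infty(\TO)$ through the two displayed estimates and $\int_{\R^2}|x|^s|\nabla\varphi_\e(x)|\,dx=O(\e^{s-1})$ yields $\|\nabla\g^\e\|_{L^\infty(\TO)}=o(\e^{s-1})$; here the H\"older decay enters \emph{quadratically}, which is exactly what makes the exponent in (ii) sharp for $\theta<1$.

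It then remains to interpolate in $\theta$. Since $|\two^\e|\le|\nabla^2 u_\e|$ on $\TO$ (because $|\vec n^\e|=1$), log-convexity of the Lebesgue norms between $L^\infty$ and $L^{\frac2s}$ applied to $\nabla^2 u_\e$, together with $\|\nabla^2 u_\e\|_{L^{\frac2s}(\TO)}=o(\e^{s-1})$ from \Cref{moli-est}(ii), gives
\[
\|\two^\e\|_{L^{\frac{2}{s\theta}}(\TO)}\le\|\nabla^2 u_\e\|_{L^\infty(\TO)}^{1-\theta}\,\|\nabla^2 u_\e\|_{L^{\frac2s}(\TO)}^{\theta}=o\big(\e^{(\frac s2-1)(1-\theta)+(s-1)\theta}\big)=o\big(\e^{\frac s2(1+\theta)-1}\big),
\]
which is (i). For (ii), $|\G^\e|\aleq|(\g^\e)^{-1}|\,|\nabla\g^\e|\aleq|\nabla\g^\e|$ on $\TO$ by \Cref{iso-mol-immersion}(ii), so $\|\G^\e\|_{L^\infty(\TO)}=o(\e^{s-1})$ by the previous step, and interpolating $\G^\e$ between $L^\infty$ and $L^{\frac1s}$ with $\|\G^\e\|_{L^{\frac1s}(\TO)}=o(\e^{2s-1})$ from \Cref{Christoffel}(ii) gives
\[
\|\G^\e\|_{L^{\frac{1}{s\theta}}(\TO)}\le\|\G^\e\|_{L^\infty(\TO)}^{1-\theta}\,\|\G^\e\|_{L^{\frac1s}(\TO)}^{\theta}=o\big(\e^{(s-1)(1-\theta)+(2s-1)\theta}\big)=o\big(\e^{s(1+\theta)-1}\big).
\]
The endpoints $\theta\in\{0,1\}$ reproduce the $L^\infty$ bounds and the estimates of \Cref{Christoffel}(i)--(ii), a reassuring consistency check. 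The one genuinely delicate point is the commutator bound for $\nabla\g^\e$ in the second step: the direct estimate $|\nabla\g^\e|\aleq|\nabla^2 u_\e|\,|\nabla u_\e|$ gives only $o(\e^{\frac s2-1})$ in $L^\infty$, which after interpolation falls short of (ii) for every $\theta<1$, so the cancellation obtained by subtracting the mollified metric $\big((\nabla u)^T\nabla u\big)_\e$ is essential.
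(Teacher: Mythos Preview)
Your proof is correct and follows exactly the approach the paper sketches: it interpolates the existing $L^{\frac2s}$ and $L^{\frac1s}$ estimates of \Cref{Christoffel} against new $L^\infty$ bounds coming from the $c^{0,\frac s2}$ regularity of $\nabla u$, which is precisely what the reference to \cite[Equations (4.4) and (4.8)]{DLP20} is pointing to. You have supplied the details the paper leaves to the reader, including the crucial commutator cancellation needed for $\|\nabla\g^\e\|_{L^\infty(\TO)}=o(\e^{s-1})$.
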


\begin{proof}
The estimates are obtained by interpolating the estimates in \Cref{Christoffel} with a new set of estimates obtained through $c^{0,\frac s2}$ regularity in the same manner; see \cite[Equations (4.4) and (4.8)]{DLP20}. We will leave the details to the reader.    
\end{proof}

An immediate corollary is the following better than $L^1$-estimate for ${\rm curl} \, \two^\e$. As we previously explained in \Cref{L1-not-enough}, this is the missing link  for following the steps of \cite{DLP20} in proving our main theorem.  We can now obtain this estimate  only for the super-critical values of $s>2/3$. 
\begin{corollary}
If $s>\frac 23$, there exists $r>1$ such that 
$$
\lim_{\e \to 0}  \|{\rm curl} \, \two^\e\|_{L^r(\TO)} =0.
$$
\end{corollary}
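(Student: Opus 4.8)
The plan is to combine the Codazzi--Mainardi identity used in the proof of \Cref{Curl2nd-e} with the interpolated bounds of \Cref{better-convergences}. Recall that for the mollified immersion $u_\e$ the Codazzi--Mainardi equations read, for $i=1,2$,
\[
\p_2 \two^\e_{i1}-\p_1 \two^\e_{i2} = \two^\e_{l1}\Gamma^{l,\e}_{i2}-\two^\e_{l2}\Gamma^{l,\e}_{i1} \quad\text{on }\TO,
\]
whence the pointwise bound $|{\rm curl}\,\two^\e|\aleq |\two^\e|\,|\Gamma^\e|$ on $\TO$. It thus suffices to bound $|\two^\e|\,|\Gamma^\e|$ in $L^r(\TO)$ for a suitable $r>1$.

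I would apply H\"older's inequality together with \Cref{better-convergences}(i)--(ii) for a common parameter $\theta\in[0,1]$, using the exponents $\tfrac{2}{s\theta}$ for $\two^\e$ and $\tfrac{1}{s\theta}$ for $\Gamma^\e$, whose reciprocals add up to $\tfrac{s\theta}{2}+s\theta=\tfrac{3s\theta}{2}=:\tfrac1r$. This gives
\[
\|{\rm curl}\,\two^\e\|_{L^r(\TO)} \aleq \|\two^\e\|_{L^{\frac{2}{s\theta}}(\TO)}\,\|\Gamma^\e\|_{L^{\frac{1}{s\theta}}(\TO)} \le o\!\left(\e^{\frac s2(1+\theta)-1}\right)o\!\left(\e^{s(1+\theta)-1}\right) = o\!\left(\e^{\frac{3s}{2}(1+\theta)-2}\right).
\]

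It remains to choose $\theta$ so that simultaneously $r>1$ and the exponent of $\e$ is nonnegative. The condition $r>1$ is $\tfrac{3s\theta}{2}<1$, i.e.\ $\theta<\tfrac{2}{3s}$, while nonnegativity of the exponent is $\tfrac{3s}{2}(1+\theta)\ge 2$, i.e.\ $\theta\ge\tfrac{4}{3s}-1$. For $\tfrac23<s<1$ one checks that $0<\tfrac{4}{3s}-1<\tfrac{2}{3s}<1$, so $\big[\tfrac{4}{3s}-1,\tfrac{2}{3s}\big)$ is a nonempty subinterval of $(0,1)$; fixing any such $\theta$ and setting $r:=\big(\tfrac{3s\theta}{2}\big)^{-1}>1$ we conclude $\|{\rm curl}\,\two^\e\|_{L^r(\TO)}=o(1)\to 0$ as $\e\to0$.

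The argument is essentially bookkeeping once \Cref{better-convergences} is available, so there is no genuine obstacle; the point worth emphasizing is that it breaks down exactly at $s=\tfrac23$, where $\tfrac{4}{3s}-1=1=\tfrac{2}{3s}$ and the admissible range for $\theta$ collapses, forcing $\theta=1$ and hence $r=1$ --- that is, only the $L^1$ bound of \Cref{Curl2nd-e} (cf.\ \Cref{L1-not-enough}). This is precisely the deficiency that \Cref{th:lambdajac} is designed to bypass in the critical case $s=\tfrac23$.
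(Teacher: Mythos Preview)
Your proof is correct and follows essentially the same route as the paper: you use the Codazzi--Mainardi identity to bound $|{\rm curl}\,\two^\e|$ by $|\two^\e|\,|\Gamma^\e|$, apply H\"older with exponents $\tfrac{2}{s\theta}$ and $\tfrac{1}{s\theta}$ coming from \Cref{better-convergences}, and then solve the same pair of inequalities $\theta<\tfrac{2}{3s}$ and $\theta\ge\tfrac{4}{3s}-1$ to find an admissible $\theta$. Your closing remark on why the argument collapses at $s=\tfrac{2}{3}$ is also in line with the paper's discussion.
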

\begin{proof}
Letting 
$$
\frac 1r = \frac {s\theta}{2} + s\theta = \frac {3s}2\theta,
$$  we have  
$$
\|{\rm curl}\, \two^\e\|_{L^r(\TO)} \le \|\two\|_{L^{\frac {2}{s\theta}}} \|\G^\e\|_{L^{\frac 1{s\theta}}}  \le o(\e^{\frac {3s}{2} (1+ \theta) - 2}). 
$$ To complete the proof we need to show that there is $\theta \in (0,1)$ such that
$$
r >1 \quad \mbox{and} \quad \frac {3s}{2} (1+ \theta) - 2 \ge 0.
$$ These are respectively equivalent to 
$$ 
\theta  <\frac 2{3s}  \quad \mbox{and} \quad \theta \ge \frac {4}{3s} -1.
$$ But if $\frac 23 <s  < 1$  we have
$$
0<\frac 13  <\frac {4}{3s} -1  <  \frac 2{3s}  <1, 
$$ and so we can choose any $\theta \in [\frac {4}{3s} -1,  \frac 2{3s})$.
 \end{proof}
 
 Once the $L^r$ vanishing estimate for ${\rm curl}\, \two^\e$ is obtained, and having the usual elliptic estimates at hand, one can proceed as in \cite[Proposition 4.5]{DLP20} to show that $\Jac(f) \equiv 0$ as required by \Cref{Jac(f)=0}.  This completes the proof of \Cref{thm-develop} but only for $s>2/3$ as in \Cref{sec-dev}. Once again, we will leave the details to the interested reader. 
 
\section{Fractional Absolute Continuity}\label{s-ac}

In proving \Cref{th-main1},  we  used the following result, which  follows by an embedding theorem from a known result for Bessel-potential spaces \cite[Theorem 1.1]{HH15}. 
 
\begin{theorem}\label{th-hausdorffcontentzero}
Let $u \in W^{s,p}(\R,\R^m)$ with $s \in (0,1)$, $p \in (1,\infty)$ such that and $sp > 1$ and let $I$ be a finite interval. Then the Hausdorff dimension $\mathcal{H}$-dim of $u^*(I) \leq \frac{1}{s}$ for any interval $I \subset \R$. Here $u^*$ denotes the continuous representative of $u$.
\end{theorem}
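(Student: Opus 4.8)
The plan is to give a self-contained covering argument, bypassing the Bessel-potential result of \cite{HH15} and instead organizing the proof around a notion of \emph{fractional absolute continuity}. The only analytic input is the scale-invariant Morrey estimate on an interval: for every bounded interval $J$ and every $v\in W^{s,p}(J)$ with $sp>1$,
\[
\osc_J v\ :=\ \sup_{x,y\in J}|v(x)-v(y)|\ \le\ C(s,p)\,|J|^{s-\frac1p}\,[v]_{W^{s,p}(J)} .
\]
This follows from the one–dimensional embedding $W^{s,p}(0,1)\hookrightarrow C^{0,s-1/p}$ \cite[Theorem 8.2]{DNPV12} and the homogeneity $[v(a+\ell\,\cdot)]^p_{W^{s,p}(0,1)}=\ell^{sp-1}[v]^p_{W^{s,p}(a,a+\ell)}$ obtained by rescaling $J$ to the unit interval; in particular $u$ admits a continuous representative $u^*$, uniformly continuous on every compact subinterval of $\R$.

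The quantitative heart of the argument — the property I would call $s$-absolute continuity — is the following estimate. Fix $t\in(\tfrac1s,p)$; this range is nonempty \emph{precisely because} $sp>1$. For any finite family $\{I_j\}_j$ of pairwise disjoint subintervals of $I$, apply the Morrey estimate on each $I_j$, and then H\"older's inequality with exponents $\tfrac pt$ and $\tfrac{p}{p-t}$, using the superadditivity $\sum_j[u]^p_{W^{s,p}(I_j)}\le[u]^p_{W^{s,p}(I)}$ (the squares $I_j\times I_j$ are disjoint inside $I\times I$). A short exponent count — in which the inequality $st>1$ is exactly what permits summing the geometric factors via $\sum_j|I_j|^{\alpha}\le(\max_j|I_j|)^{\alpha-1}|I|$ with $\alpha=\tfrac{(sp-1)t}{p-t}\ge1$ — then yields
\[
\sum_j\big(\osc_{I_j}u^*\big)^{t}\ \le\ C(s,p,t)\,\big(\max_j|I_j|\big)^{st-1}\,|I|^{1-\frac tp}\,[u]^{t}_{W^{s,p}(I)} .
\]

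From here the theorem is immediate. For each $k\in\N$, partition the bounded interval $I$ into essentially disjoint subintervals of length at most $2^{-k}$; then $u^*(I)$ is covered by the balls $\overline B\big(u^*(a_j),\osc_{I_j}u^*\big)$ centered at the images of the left endpoints, whose diameters tend to $0$ uniformly by the uniform continuity of $u^*$, and whose radii obey, by the displayed inequality, $\sum_j(\osc_{I_j}u^*)^{t}\le C\,2^{-k(st-1)}\,|I|^{1-t/p}\,[u]^{t}_{W^{s,p}(I)}\to0$ as $k\to\infty$, since $st>1$. Hence $\mathcal H^{t}(u^*(I))=0$ for every $t\in(\tfrac1s,p)$, and since such $t$ can be chosen arbitrarily close to $\tfrac1s$, monotonicity of Hausdorff measures gives that the Hausdorff dimension of $u^*(I)$ is at most $\tfrac1s$. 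The argument makes no use of the dimension of the target, so it applies verbatim to $u\in W^{s,p}(\R,\R^m)$ for all $m$.

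The only genuinely delicate step is the \emph{scale invariance} of the constant in the Morrey estimate: one must ensure that no power of $|J|$ other than the stated $|J|^{s-1/p}$ creeps in, which is why I would run the rescaling-to-the-unit-interval computation explicitly rather than quoting an embedding constant on a fixed domain. Everything else is elementary H\"older bookkeeping, and $sp>1$ is used exactly twice: to produce a continuous representative at all, and to make the admissible window $(\tfrac1s,p)$ nonempty — equivalently, to guarantee $st>1$, which is precisely the inequality that forces the geometric sums, hence the Hausdorff premeasures, to collapse as the mesh shrinks.
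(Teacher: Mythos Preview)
Your proof is correct and follows essentially the same strategy as the paper's \Cref{s-ac}: both rest on the scale-invariant Morrey estimate on intervals, the superadditivity $\sum_j[u]^p_{W^{s,p}(I_j)}\le[u]^p_{W^{s,p}(I)}$, and a covering argument. The packaging differs slightly. The paper first proves that $u^*$ is $(sp-1,p)$-absolutely continuous (\Cref{th-wspabscont}), then invokes a monotonicity lemma (\Cref{abs-monotone}) to descend to exponent $\tilde p$ near $1/s$, and finally applies \Cref{la-abscontcover}. You instead fix $t\in(1/s,p)$ from the outset and build the H\"older interpolation directly into the oscillation sum, obtaining $\sum_j(\osc_{I_j}u^*)^t\aleq(\max_j|I_j|)^{st-1}$ in one stroke; your exponent identity $(\alpha-1)\tfrac{p-t}{p}=st-1$ is precisely what the paper's monotonicity lemma encodes. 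Your route is marginally more direct and yields $\mathcal H^t(u^*(I))=0$ for every $t>1/s$ outright, whereas the paper, as written, lands at $t=0$, $\tilde p=1/s$ and only concludes finite $1/s$-content --- one then still needs to observe (via the same monotonicity) that for any $q>1/s$ a positive $\tilde t$ is available, giving $\mathcal H^q_\infty=0$.
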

Indeed, following \cite[Theorem 7.63 (g)]{adams}, we note that for any $p>1$ and $\eps>0$, 
$$
W^{s,p} (\R^n)   \hookrightarrow   L^{s-\eps,p} (\R^n).  
$$ Choosing $\eps>0$ such that $p(s-\eps)-1>0$, and applying   \cite[Theorem 1.1]{HH15}, we obtain \Cref{th-hausdorffcontentzero}. (Note the notational disparity  with \cite{HH15}, which uses $W^{s,p}$ for the Bessel-potential space $H^s_p = L^{s,p}$.)
 
\begin{remark}
\begin{itemize}
 \item[] The typical space-filling curves provide counterexamples to \Cref{th-hausdorffcontentzero} if $sp < 1$. E.g. the Peano-curve $f: I \to \R^2$ that fills a square is of class $C^{1/2}$, and thus belongs to $W^{s,2}$ for any $s < \frac{1}{2}$ -- however $\mathcal{H}^2_{\infty}(f(I)) \neq 0$.
 
 \item[] The case $sp = 1$ is quite curious. It is known that for $u \in W^{1,1}(I,\R^N)$, if $u^\ast$ denotes its continuous representative then $\mathcal{H}^1(u^\ast(I)) < \infty$. This is also based on the absolute continuity of the integral, however, in the fractional case $s<1$ the condition $sp=1$ does not guarantee continuity in one dimension. Indeed, it is unclear to us if  there is always a representative $u^\ast$ for  $u \in W^{s,\frac{1}{s}}(\R,\R^N)$ such that $\mathcal{H}^{\frac{1}{s}}(u^\ast(I)) < \infty$.
\end{itemize}
\end{remark}

We would like to note that \Cref{th-hausdorffcontentzero} also follows from a notion reminiscent of absolute continuity for fractional Sobolev maps. It is well-known that \Cref{th-hausdorffcontentzero} holds for $s=1$ and $p>1$, which is a consequence of absolute continuity of $W^{1,1}$-maps. Also it is known from the area formula and the Luzin property \cite[Lemma 21]{H00} that the continuous representative of a map $u \in W^{1,p}(\R^n,\R^m)$ for $m \geq n \geq 2$ and $p > n$ has image $\mathcal{H}^p(u(\R^n)) = 0$. In this sense, \Cref{th-hausdorffcontentzero} is a natural extension to maps with one-dimensional domain in fractional Sobolev spaces.   In this appendix we will further discuss this approach. The authors do not know of any instance in the literature where the following observations are made.

One of the basic Sobolev space results is that the continuous representative $f^\ast$ of a function $f \in W^{1,1}$ is absolutely continuous, that is for any $\eps > 0$ there exists $\delta > 0$ such that whenever we have a pairwise disjoint collection of intervals $(I_i)_{i=1}^\infty$ with
\[
 \sum_{i} |I_i| < \delta
\] then
\[
 \sum_{i} |f^\ast(x)-f^\ast(y)| < \eps.
\] This follows easily from the fundamental theorem of calculus (which holds for the continuous  representative $f^\ast$)
\[
 f^\ast(a)-f^\ast(b) = \int_{a}^b f'(z)\, dz
\]
and the absolute continuity of the integral, which says that if $g \in L^1(\Omega)$ then for any $\eps > 0$ there exists $\delta > 0$ such that 
\[
 \|g\|_{L^1(U)} < \eps \quad \forall U \subset \Omega \text{ measurable }: |U| < \delta.
\]
By a covering argument, it is also easy to show that an absolutely continuous function $f: I \subset \R \to \R^N$ must have a 1-dimensional finite Hausdorff content $\mathcal{H}^1_\infty(f(I)) < \infty$, where
\[
 \mathcal{H}^p_\infty(A) := \inf \left \{ \sum_{i} (r_i)^p: \ \text{there is a cover of $A \subset \bigcup_{i} B(r_i)$ with balls $B(r_i)$ of radius $r_i > 0$} \right \}.
\] The underlying reason for \Cref{th-hausdorffcontentzero} is that there is a fractional generalization of a sort of absolute continuity to fractional Sobolev spaces $W^{s,p}(\R)$ as long as $sp> 1$. Observe that for $s < 1$ there are discontinuous functions in $W^{s,p}$ with $sp=1$. 
\begin{definition}[(t,p)-absolute continuity]
Let $t\ge 0$ and $p\in (0,\infty)$. A continuous function $f: \R \to \R^N$ is called $(t,p)$-absolutely continuous if the following holds.
For any $\epsilon > 0$ there exists a $\delta > 0$ such that 
whenever we have a disjoint intervals $(I_i)_{i=1}^\infty$ with
\[
 \sum_{i} |I_i| < \delta
\]
then
\[
 \sum_{i} \sup_{x\neq y \in I_i} \frac{|f(x)-f(y)|}{|x-y|^t}^p < \epsilon.
\]
\end{definition}
For $p=1$, $t=0$ this is the usual absolute continuity.

The following lemmas are elementary.
\begin{lemma}\label{abs-monotone}
If  $\frac{1+ \tilde t}{1+t} \le \frac{\tilde p}{p} \le 1 $, then $(t,p)$-absolute continuity implies $(\tilde t, \tilde p)$-absolute continuity.  
\end{lemma}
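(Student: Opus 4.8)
The plan is to reduce the statement to two elementary estimates: an interpolation-type comparison of the two H\"older quotients on a short interval, and a Young-inequality splitting that turns a control on $\sum_i M[I_i]^p$ into a control on $\sum_i \tilde M[I_i]^{\tilde p}$. For a bounded interval $J$ I write $M[J]:=\sup_{x\ne y\in J}\frac{|f(x)-f(y)|}{|x-y|^{t}}$ and $\tilde M[J]:=\sup_{x\ne y\in J}\frac{|f(x)-f(y)|}{|x-y|^{\tilde t}}$. Note first that the hypothesis $\frac{1+\tilde t}{1+t}\le\frac{\tilde p}{p}\le1$ forces $\tilde p\le p$ and $\tilde t\le t$. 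If $\tilde p=p$ the statement is immediate: for $|J|\le1$ we have $\tilde M[J]\le M[J]\,|J|^{t-\tilde t}\le M[J]$, so $(\tilde t,p)$-absolute continuity follows at once from $(t,p)$-absolute continuity (after shrinking the threshold to be $\le1$). Hence from now on I assume $\tilde p<p$, in which case the hypothesis also forces $\tilde t<t$.

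First I would record the interpolation inequality on short intervals: for any interval $J$ with $|J|\le1$ and any $x\ne y\in J$,
\[
\frac{|f(x)-f(y)|}{|x-y|^{\tilde t}}=\frac{|f(x)-f(y)|}{|x-y|^{t}}\,|x-y|^{t-\tilde t}\le M[J]\,|J|^{t-\tilde t},
\]
so that $\tilde M[J]^{\tilde p}\le M[J]^{\tilde p}\,|J|^{(t-\tilde t)\tilde p}$. Applying Young's inequality to this product with the conjugate exponents $\frac p{\tilde p}$ and $\frac p{p-\tilde p}$ gives, with $\beta:=\frac{(t-\tilde t)\,p\,\tilde p}{p-\tilde p}$,
\[
\tilde M[J]^{\tilde p}\le\frac{\tilde p}{p}\,M[J]^{p}+\frac{p-\tilde p}{p}\,|J|^{\beta}.
\]
Summing over a disjoint family $(I_i)_i$ yields $\sum_i\tilde M[I_i]^{\tilde p}\le\frac{\tilde p}{p}\sum_i M[I_i]^{p}+\frac{p-\tilde p}{p}\sum_i|I_i|^{\beta}$.

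The crux — and the only place the precise exponent relation is used — is to show $\beta\ge1$. From $1+\tilde t\le\frac{\tilde p}{p}(1+t)$ one gets $t-\tilde t=(1+t)-(1+\tilde t)\ge(1+t)\,\frac{p-\tilde p}{p}$, and therefore $\beta\ge(1+t)\,\tilde p$; moreover, since $\tilde t\ge0$, the same inequality gives $1\le\frac{\tilde p}{p}(1+t)$, i.e.\ $(1+t)\tilde p\ge p\ge1$. Hence $\beta\ge1$, so whenever $\sum_i|I_i|<\delta\le1$ we have $|I_i|^{\beta}\le|I_i|$ and thus $\sum_i|I_i|^{\beta}<\delta$.

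To conclude, fix $\epsilon>0$. By the assumed $(t,p)$-absolute continuity of $f$ there is $\delta\in(0,1]$ such that $\sum_i|I_i|<\delta$ forces $\sum_i M[I_i]^{p}<\epsilon$; for such a disjoint family the displays above give $\sum_i\tilde M[I_i]^{\tilde p}\le\frac{\tilde p}{p}\epsilon+\frac{p-\tilde p}{p}\delta$, which can be made as small as desired. This is exactly $(\tilde t,\tilde p)$-absolute continuity. The only genuinely delicate point is the bookkeeping in the third step giving $\beta\ge1$; the remaining steps are one-line inequalities together with an unwinding of the definition.
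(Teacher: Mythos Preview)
Your argument is correct and runs parallel to the paper's, diverging in one step. Both proofs start from the same pointwise interpolation---expressing the $(\tilde t,\tilde p)$-quotient as a power of the $(t,p)$-quotient times a power of $|x-y|$---but where you then split each product termwise via Young's inequality with exponents $p/\tilde p$ and $p/(p-\tilde p)$ to get an \emph{additive} bound $\sum_i\tilde M[I_i]^{\tilde p}\le\frac{\tilde p}{p}\sum_i M[I_i]^{p}+\frac{p-\tilde p}{p}\sum_i|I_i|^{\beta}$, the paper instead applies H\"older's inequality $\|\cdot\|_{\ell^1}\le\|\cdot\|_{\ell^{1/\lambda}}\|\cdot\|_{\ell^{1/(1-\lambda)}}$ (with $\lambda=\tilde p/p$) to the sequence, arriving at the \emph{multiplicative} bound $\big(\sum_i M[I_i]^{p}\big)^{\lambda}\big(\sum_i|I_i|^{\gamma}\big)^{1-\lambda}$. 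Either way the task reduces to checking that the exponent on $|I_i|$ is at least $1$.

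One small point: your verification $\beta\ge(1+t)\tilde p\ge p\ge1$ inserts the assumption $p\ge1$, which is not part of the definition of $(t,p)$-absolute continuity (only $p\in(0,\infty)$ is assumed there). The paper's H\"older route sidesteps this: its exponent $\gamma=\frac{\lambda t-\tilde t}{1-\lambda}$ satisfies $\gamma\ge1$ directly from $1+\tilde t\le\lambda(1+t)$, with no side condition on $p$. That said, every application of the lemma in the paper has $p>1$, so your extra hypothesis is harmless in context. Your explicit treatment of the borderline case $\tilde p=p$ is cleaner than the paper's, whose H\"older step (exponent $1/(1-\lambda)$) is formally degenerate there.
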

\begin{proof}
Let $\lambda:= \tilde p/p \le 1$.  For any collection of  disjoint intervals $I_i$ we have
\[
\begin{aligned}
\sum_{i} \sup_{x\neq y \in I_i} \frac{|f(x)-f(y)|}{|x-y|^{\tilde t}}^{\tilde p}&  = \sum_{i} \sup_{x\neq y \in I_i} \Big (\frac{|f(x)-f(y)|}{|x-y|^{t}}^{p}\Big )^\lambda |x-y|^{\lambda t- \tilde t}\\ & \le \sum_{i} \Big ( \sup_{x\neq y \in I_i}  \frac{|f(x)-f(y)|}{|x-y|^{t}}^{p}\Big )^\lambda |I_i|^{\lambda t- \tilde t} 
\\ & \le \Big ( \sum_{i}  \sup_{x\neq y \in I_i}  \frac{|f(x)-f(y)|}{|x-y|^{t}}^{p}  \Big )^\lambda \Big (\sum_i |I_i|^{\frac{(\lambda t- \tilde t)}{1-\lambda}} \Big )^{1-\lambda},
\end{aligned}
\] where we used the H\"older inequality $\|\cdot\|_{l^1}\le \|\cdot\|_{l^\frac1\lambda} \|\cdot\|_{l^{\frac{1}{1-\lambda}}}$. Since $f$ is $(t,p)$-absolutely continuous, given $\epsilon>0$, we choose $\delta_1>0$ such that 
$$
  \sum_{i}  \sup_{x\neq y\in I_i}  \frac{|f(x)-f(y)|}{|x-y|^{t}}^{p}  <\epsilon.
$$ Note that by the assumption 
$$
\frac{\lambda t- \tilde t}{1-\lambda} \ge 1. 
$$ If $\displaystyle \sum_{i} |I_i| < \delta:= \min\{\delta_1, \epsilon \}$, we hence obtain by combining the  above estimates 
\[
\begin{aligned}
\sum_{i} \sup_{x\neq y \in I_i} \frac{|f(x)-f(y)|}{|x-y|^{\tilde t}}^{\tilde p}& < \epsilon^\lambda \Big ( \sum_i |I_i|\Big )^{\frac{\lambda t- \tilde t}{1-\lambda}(1-\lambda)} 
< \epsilon^\lambda \delta ^{ \lambda t-\tilde t}  \le \epsilon.  
\end{aligned}\qedhere
\] 
\end{proof}

\begin{lemma}[Hausdorff content of $(t,p)$-absolutely continuous maps]\label{la-abscontcover}
Let $f: I \to \R^N$ be $(t,p)$-absolutely continuous. Then if $t> 0$
\[
 \mathcal{H}^{p}_\infty(f(I)) =0.
\]
If $t = 0$ we still have
\[
 \mathcal{H}^{p}_\infty(f(I)) < \infty.
\]
\end{lemma}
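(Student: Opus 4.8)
The plan is to reduce the statement to an elementary covering estimate: subdivide $I$ into many small pieces, cover the image of each piece by a single ball whose radius is controlled by a H\"older-type oscillation, and then use $(t,p)$-absolute continuity to bound the resulting sum of radii. First I fix $\epsilon>0$ and let $\delta>0$ be the number provided by the definition of $(t,p)$-absolute continuity. Since $I$ is a bounded interval, I partition it into $M:=\lceil |I|/\delta\rceil$ consecutive subintervals $J_1,\dots,J_M$ of equal length $|J_k|=|I|/M<\delta$ (taken half-open so as to be pairwise disjoint). Because $\mathcal H^p_\infty$ is (finitely) subadditive, it suffices to estimate $\mathcal H^p_\infty(f(J_k))$ for each $k$ and sum.

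Next, fixing $k$ and an arbitrary $n\in\N$, I subdivide $J_k$ further into $n$ consecutive subintervals $J_k^{(1)},\dots,J_k^{(n)}$ of length $|J_k|/n$, and set
\[
c_{k,j}:=\sup_{x\neq y\in J_k^{(j)}}\frac{|f(x)-f(y)|}{|x-y|^t}.
\]
Since $t\ge 0$ and $|x-y|\le |J_k|/n$ on $J_k^{(j)}$, the set $f(J_k^{(j)})$ has diameter at most $c_{k,j}(|J_k|/n)^t$, hence is contained in a ball of that radius. Summing over $j$ gives
\[
\mathcal H^p_\infty(f(J_k))\le \left(\frac{|J_k|}{n}\right)^{tp}\sum_{j=1}^n c_{k,j}^p.
\]
The intervals $J_k^{(1)},\dots,J_k^{(n)}$ are pairwise disjoint with total length $|J_k|<\delta$, so $(t,p)$-absolute continuity yields $\sum_{j=1}^n c_{k,j}^p<\epsilon$; combined with $|J_k|\le|I|$ this gives $\mathcal H^p_\infty(f(J_k))\le \epsilon(|I|/n)^{tp}$ for every $n\in\N$.

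Summing over $k=1,\dots,M$ then yields $\mathcal H^p_\infty(f(I))\le M\,\epsilon\,(|I|/n)^{tp}$. If $t>0$, I keep $\epsilon$ (hence $\delta$ and $M$) fixed and let $n\to\infty$; the factor $(|I|/n)^{tp}$ tends to $0$, forcing $\mathcal H^p_\infty(f(I))=0$. If $t=0$, the factor equals $1$ for all $n$, so taking $\epsilon=1$ gives $\mathcal H^p_\infty(f(I))\le M<\infty$. There is no serious obstacle here; the only point needing care is the order of quantifiers: the threshold $\delta$ is fixed once $\epsilon$ is chosen, so $I$ must be cut into $O(1/\delta)$ pieces \emph{before} exploiting the refinement parameter $n$, and it is precisely this that makes the $t=0$ conclusion merely finiteness (as $\epsilon\to0$ the number of pieces $M$ may blow up) rather than vanishing of the content.
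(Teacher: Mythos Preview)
Your proof is correct and follows essentially the same strategy as the paper: subdivide $I$ into pieces of total length below the $\delta$-threshold, refine each piece into arbitrarily small subintervals, cover the image of each subinterval by a ball whose radius is controlled by the H\"older-type oscillation, and use $(t,p)$-absolute continuity to bound the resulting sum. Your use of half-open intervals to get a disjoint partition is a minor cosmetic simplification over the paper's two interleaved families $(I_i)$, $(J_i)$, and your exponent $(|J_k|/n)^{tp}$ is the one consistent with the stated definition (the paper's $\sigma^t$ appears to be a typo), but the argument is otherwise the same.
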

\begin{proof}
In the definition of $(t,p)$-absolute continuity let $\epsilon = 1$ and obtain some $\delta > 0$. Let $\tilde{I}$ be any subinterval of $I$ with $\diam(\tilde{I}) < \frac{\delta}{2}$.  For any $\sigma > 0$ we find $N=N(\sigma)$-finitely many intervals $I_i$ and $J_i$ such that each $(I_i)_{i=1}^N$ and $(J_i)_{i=1}^N$ are pairwise disjoint, $|I_i|, |J_i| < \sigma$ and $\bigcup_{i} I_i \cup J_i = \tilde{I}$. Each $f(I_i)$ (resp.\@ $f(J_i)$) is then contained in a ball of radius $2\sigma^{\frac{t}{p}} \sup_{x,y \in I_i} \frac{|f(x)-f(y)|}{|x-y|^{\frac{t}{p}}}$ (centered at $f(x_i)$ for some $x_i \in I_i$ (resp.\@ $x\in J_i$). By $(t,p)$-absolute continuity we then have
\[
\mathcal{H}^{p}_{\infty} (f(\tilde{I}) ) \aleq \sum_{i} \sigma^{t} \brac{\sup_{x,y \in I_i} \frac{|f(x)-f(y)|^p}{|x-y|^{t}}+\sup_{x,y \in J_i} \frac{|f(x)-f(y)|^p}{|x-y|^{t}}} \aleq \sigma^t.
\]
Since this holds for any subinterval $\tilde{I}$ of diameter $\frac{\delta}{2}$, we cover $I$ by $\approx \frac{1}{\delta}$ many such intervals and obtain
\[
 \mathcal{H}^{p}_{\infty} (f(I)) \aleq \frac{1}{\delta} \sigma^t < \infty.
\]
If $t>0$ we can take $\sigma$ arbitrarily small to obtain $\mathcal{H}^{p}_{\infty} (f(I)) = 0$. 
\end{proof}

In view of the above two lemmas, \Cref{th-hausdorffcontentzero}  will follow from one last statement .   

\begin{lemma}\label{th-wspabscont}
Let $s \in (0,1)$, $p \in (1,\infty)$ with $sp > 1$. Then the continuous representative $u^\ast$ of any map $u \in W^{s,p}(\R)$ is $(sp-1,p)$-absolutely continuous.
\end{lemma}
\begin{remark}
\begin{itemize}
\item[] For $s=1$ and $p=1$ the result is still true (and it is the classical absolute continuity result for $W^{1,1}$-maps in $1$ dimension).
\item[] There cannot be such a result for when $sp=1$, $s < 1$, since $W^{s,\frac{1}{s}}$ does not embed  into the continuous functions. E.g., $s=\frac{1}{2}$ and $p=2$: denote $B^2 \subset \R^2$ the unit ball in $\R^2$ and $B^2_+ := B^2 \cap \R^2_+$  the upper halfball then $\log \log 2\sqrt{(x_1)^2+(x_2)^2}$ belongs to $W^{1,2}(B^2)$, thus to $W^{1,2}(B^2_+)$. By trace theorem for $I=[-1/2,1/2]$, we find that $\log \log 2 |x_1| \in W^{\frac{1}{2},2}(I)$, however this is clearly not a continuous function (let alone absolutely continuous of any sense).
\end{itemize}
\end{remark}

\begin{proof}[Proof of \Cref{th-wspabscont}]
  Since $sp>1$, $W^{s,p}(I)$ embeds in $C^{0,s-1/p}(I)$ for any interval (see e.g.\@ \cite[Section 8]{DNPV12}).  Indeed,  for a universal constant $C>0$ and all $a,b\in I$ we have
  $$
 |u^\ast(b)- u^\ast(a)| \le   C [u]_{W^{s,p}(I)} |a-b|^{s-1/p} , 
 $$ which gives for $a\neq b$:
 $$
 \frac{|u^\ast(b)- u^\ast(a)|^p}{|a-b|^{sp-1}}\le  C [u]^p_{W^{s,p}(I)}.
 $$ We therefore obtain for the mutually disjoint $I_i$:
$$
\sum_{i}  \sup_{x\neq y \in I_i} \frac{|u^\ast(x)-u^\ast(y)|^p}{|x-y|^{sp-1}}\leq C \sum_{i}  [u]^p_{W^{s,p}(I_i)} \le   [u]^p_{W^{s,p}(A)},
 $$  where $\displaystyle A = \bigcup I_i$. Now, by the absolute continuity of the integral $[u]^p_{W^{s,p}(\R)} <\infty$, for any $\epsilon>0$,  there is $\delta$ small enough such that $|A|= \displaystyle \sum_{i}|I_i| <\delta$ implies  $[u]^p_{W^{s,p}(A)} < \epsilon$.
\end{proof}

\begin{proof}[Proof of \Cref{th-hausdorffcontentzero}]

By \Cref{th-wspabscont}, $f$ is $(sp-1, p)$-absolutely continuous. Let $\tilde p = 1/s$ and $\tilde s= s$. Then 
$$
\frac {1+ \tilde s\tilde p  - 1  }{1+ sp-1} = \frac {\tilde p}p = \frac{1}{sp} <1. 
$$ The conditions of \Cref{abs-monotone} are satisfied and hence $f$ is also   $(\tilde s\tilde p-1, \tilde p)$-absolutely continuous. 
\Cref{la-abscontcover} implies that the Hausdorff dimension of $u^\ast(I)$ is at most $\tilde p= 1/s$, as required.  
\end{proof}
\begin{remark}We could have also used the Sobolev embedding  $W_{loc}^{s,p}(\R,\R^m) \hookrightarrow W^{\tilde{s},\tilde{p}}_{loc}(\R,\R^m)$ for any $\tilde{s} < s$ and $\tilde{p} \leq p$ \cite[Proposition  2.1.2 and Theorem 2.4.4/1]{RuSi}, but note that this is not necessarily true for $\tilde{s} = s$ \cite{MS15}, and some small adjustment would have become necessary.
 \end{remark}
 
\bibliographystyle{abbrv}%
\bibliography{bib}%

\end{document}